\newcounter{statement}[section]
\renewcommand\thestatement{\thesection.\arabic{statement}}
\newenvironment{proposition}[1][]{\refstepcounter{statement}\vspace{12pt}\noindent\textbf{Proposition~\thestatement.\ }\label{prp:#1}\rmfamily\begin{itshape}}{\end{itshape}}
\newenvironment{theorem}[1][]{\refstepcounter{statement}\vspace{12pt}\noindent\textbf{Theorem~\thestatement.}
\rmfamily\begin{itshape}}{\end{itshape}}
\newenvironment{conjecture}[1][]{\refstepcounter{statement}\vspace{12pt}\noindent\textbf{Conjecture~\thestatement.}\label{con:#1}\rmfamily\begin{itshape}}{\end{itshape}}
\newenvironment{corollary}[1][]{\refstepcounter{statement}\vspace{12pt}\noindent\textbf{Corollary~\thestatement.}\label{cor:#1}\rmfamily\begin{itshape}}{\end{itshape}}
\newenvironment{example}[1][]{\refstepcounter{statement}\vspace{12pt}\noindent\textbf{Example~\thestatement.}\label{ex:#1}\rmfamily\begin{itshape}}{\end{itshape}}
\newenvironment{lemma}[1][]{\refstepcounter{statement}\vspace{12pt}\noindent\textbf{Lemma~\thestatement.}\label{lem:#1}\rmfamily\begin{itshape}}{\end{itshape}}
\newenvironment{problem}[1][]{\refstepcounter{statement}\vspace{12pt}\noindent\textbf{Problem~\thestatement.}\label{con:#1}\rmfamily\begin{itshape}}{\end{itshape}}
\newcommand{\A}{\mathbb{A}}
\newcommand{\B}{\mathbb{B}}
\newcommand{\C}{\mathbb{C}}
\newcommand{\D}{\mathbb{D}}
\newcommand{\E}{\mathbb{E}}
\newcommand{\F}{\mathbb{F}}
\newcommand{\G}{\mathbb{G}}
\newcommand{\I}{\mathbb{I}}
\newcommand{\K}{\mathbb{K}}
\newcommand{\M}{\mathbb{M}}
\newcommand{\N}{\mathbb{N}}
\renewcommand{\O}{\mathbb{O}}
\renewcommand{\P}{\mathbb{P}}
\newcommand{\Q}{\mathbb{Q}}
\newcommand{\R}{\mathbb{R}}
\renewcommand{\S}{\mathbb{S}}
\newcommand{\T}{\mathbb{T}}
\newcommand{\U}{\mathbb{U}}
\newcommand{\V}{\mathbb{V}}
\newcommand{\X}{\mathbb{X}}
\newcommand{\Y}{\mathbb{Y}}
\newcommand{\Z}{\mathbb{Z}}
\newcolumntype{2}{D{.}{}{2.0}}
\title{Extension Monads: Some Structure Theorems}
\author{Danielle Bowerman, Matt Insall}
\date{January 2025}
\begin{document}

\maketitle

\begin{abstract}
    We investigate the behavior of extension monads, introduced in the 1990s by the second author, in terms of structure results for infinitely many finitary operations and common constructions in varieties or categories of algebras. Specifically, we see that for finite collections of algebras of the same signature there the extension monad operation commutes with the direct product operation, and that the only retractions from an enlargement or extension monad are trivial.
\end{abstract}

\section{Introduction}
This introduction will review the basics of nonstandard methods and enlargements, as well as universal algebra. Readers comfortable with these topics could skip to the subsection on extension monads, section 1.2. For additional details on these fundamentals, refer to \cite{Burris2012} and \cite{Loeb1985}.

Given a set of urelements $U$ and another set of urelements ${}^*U$, a \textbf{superstructure embedding}, where the image of an element is its \textbf{${}^*$-transform} with transformation $^*\_$, is an injective function, $U \longrightarrow {}^*U$, such that we can extend it to their superstructures, $V(U)$ and $V({}^*U)$, where $a \in b \iff {}^*a \in {}^*b$. The existence of a nontrivial superstucture embedding was demonstrated using {\L}o\'s's theorem by \cite{Wash}. Next, we outline some details of our construction of this nontrivial superstructure embedding, starting with filters. Filter $\T$ on indexing set $I$ is a subset of the power set of $I$, $\mathcal{P}(I)$, with the following properties: if $X,Y \in \rho$, then $X \cap Y \in \T$; and if $X \in \rho$ and $X \subseteq Y$, then $Y \in \rho$.
Filters can have several properties: a filter is \textbf{principal} if and only if it is of the form $\{Y|Y \supseteq X\}$ for some $X \in \mathcal{P}(I)$; \textbf{prime} if and only if when $A \cup B \in \rho$, we have $\rho \cap \{A, B\} \neq \emptyset$; and \textbf{proper} if it is a filter that is not the entire power set.
A filter is an \textbf{ultrafilter} if and only if it is a proper filter, i.e. it is not equal to $\mathcal{P}(I)$, and it is maximal among proper filters. We will use ultrafilters in our construction. Equivalently, filter $\rho$ is an ultrafilter if and only if it is proper and, for every set $A \in \mathcal{P}$, either $A \in \rho$ or $I \setminus A \in \rho$.

The \textbf{ultrapower} of $(V(U), \in)$, defined by ultrafilter $\rho$, which is not principal or equivalently includes every cofinite subset of $I$, is $(V(U)^I, \in^I)\mathrel{/}{\sim} = (V(U)^I\mathrel{/}{\sim},{\mathrel{\in^I\mathrel{/}{\sim}})}$, where two vectors $\vec{a}\in V(U)^I$ and $\vec{b}\in V(U)^I$ are equivalent, written $\vec{a} {\sim} \vec{b}$, if and only if $\{i \in I| a_i = b_i\} \in \rho$. Additionally, $[\vec{a}]_{\sim} \mathrel{\in^I\mathrel{/}{\sim}} [\vec{b}]_{\sim}$ if and only if $\{i \in I| a_i \in b_i\} \in \rho$. Such an ultrapower is called an \textbf{enlargement} of $V(U)$ since $\rho$ being not principal means that $V(U)$ will not be isomorphic to $V({}^*U)$.
Once we form the ultrapower, the Mostowski Collapsing Lemma guarantees that membership modulo $\sim$ can be replaced by actual membership.

We will often refer to standard, nonstandard, internal, or external entities, elements, or structures, so we will review those terms here. A set or element of a superstructure enlargement is standard if it is the image under the superstructure embedding of some set or element of the original superstructure. An internal entity is any element of ${}^*A$ where $A$ is any element of $V(U)$. Any other subset of $V({}^*U)$ is external.

A natural and ``canonical'' example of the difference between an internal and external set is the natural numbers, $\N$. Here, $\N$ is external and is not a member of its superstructure enlargement ${}^*V({}^*U)$. The internal counterpart to $\N$ is the set of hyperfinite numbers ${}^*\N$.

On the other hand, the ${}^*$-transform of the collection of finite subsets of $A$ is the collection of hyperfinite subsets of ${}^*A$. It is important to note that the cardinalities of these sets can be much larger than the cardinality of $A$, and hyperfiniteness just means that they behave like finite sets in relation to other internal sets.

The \textbf{transfer principle}, which allows us to map sentences about the standard superstructure to its nonstandard counterpart, follows from our definition of a superstructure embedding and {\L}o\'s's theorem. This theorem entails that a mathematical statement, $\sigma$, in first-order predicate logic about $V(U)$ holds if and only if statement ${}^*\sigma$ holds in $V({}^*U)$, where ${}^*\sigma$ is the same statement as $\sigma$, except that each constant element $S$ of $V(U)$ utilized in $\sigma$ is replaced with the star transform of that element, ${}^*S$. The transform of any individual finite set of urelements is essentially the same set.

In universal algebra, it is typical to denote an algebra as an ordered pair $\A = (A, \mathcal{O}_\A)$, where $A$ is the underlying set, which we take to be composed of urelements, and $\mathcal{O}_\A$ is an indexed collection of functions from $A$ to itself, which are called the operations of the algebra. Homomorphisms, the morphisms in the category of algebras, are functions $f$ from the underlying set $X$ of one algebra to another underlying set $Y$ of an algebra with the same language, such that for every $o \in \mathcal{O}$, $f(o_\X(z_1,z_2,...)) = o_\Y(f(z_1),f(z_2),...)$, independent of the arguments $z_j$. A partial homomorphism is a function whose domain is some subset $S$ of an underlying set $X$ of an algebra which satisfies the condition that for every $o \in \mathcal{O}, z_1,z_2,... \in S$, such that $o_\X(z_1,z_2,...) \in S$, we have $f(o_\X(z_1,z_2,...)) = o_\Y(f(z_1),f(z_2),...)$; as such, every homomorphism is a partial homomorphism that is total. Given standard algebra $\A = (A, \mathcal{O}_\A)$ and subset $F \subseteq {}^*A$, we can generate an internal subalgebra of ${}^*\A$. We define $A_0 = F$, and then for each $n \in {}^*\N$, we define $A_{n+1} = \{o(x_1,x_2,...,x_{\eta_o})|o \in {}^*\mathcal{O}_{{}^*\A} \text{ with arity } \eta_o; x_1,x_2,...,x_{\eta_o} \in A_n \}$, where ${}^*\mathcal{O}_{{}^*\A}$ is the collection of operations on ${}^*\A$, called internal operations, and $\eta_0$ is some element of ${}^*\N$, called a hyperfinite number, which can be larger than any finite number. We denote this algebra as $\check{F} = \displaystyle \bigcup_{n \in {}^*\N} \A_n$. The transfer principle applied to the standard case gives us closure, and so the proof that $\check{F}$ is an algebra is quite similar to the proof in the standard setting. In fact, it is just the standard proof after applying the transfer principle. In addition, for subalgebra $\S = (S, {}^*\mathcal{O)}$ of ${}^*\A$, we define the standard reduct of $\S$ to be $\S|_\mathcal{O} = (S; \{{}^*f_\S| f \in \mathcal{O}\}$.

Transforming $\A$, we get ${}^*\A = ({}^*A, {}^*\mathcal{O})$. If $\mathcal{O}$ is finite, then $|{}^*\mathcal{O}| = |\mathcal{O}|$. Three considerations prevent the simple equality ${}^*\mathcal{O} = \mathcal{O}$ from always being true. If the algebra is infinite, then the operations of the algebra have an infinite cardinality. Thus, transferring each operation extends it to cover the newer extended underlying set due to the latter containing its standard form. Another consideration is whether the operations are infinitary because infinitary operations extend their ordered collection of inputs to a nonstandard ordinal when transferred, thus preventing equality. Nevertheless, if $\mathcal{O}$ is infinite, then the indexing set of $\mathcal{O}$ is smaller than the indexing set of ${}^*\mathcal{O}$, where every indexed element of $\mathcal{O}$ gets extended to an element of ${}^*\mathcal{O}$. As a consequence, ${}^*\A$ is not directly comparable to $\A$ as an algebra, and it can have many new operations, some of which may have infinitely many inputs.

We briefly cover the concept of satisfaction as it relates to algebras, which is covered in more depth in \cite{Burris2012}, and look at how it relates to ${}^*$-transforming. Given some type $\mathcal{F}$, which is an ordered collection of ordered pairs of urelements representing operation names and associated arities, with least upper bound $\sigma$ to the arities, and a set $X$ of urelements we call variables, the \textbf{set $T(X)$ of terms of type $\mathcal{F}$} over $X$ is the smallest set that contains every nullary operation symbol and every member of $X$, and for all $t_1,t_2,...,t_n \in T(X)$ and $n$-ary $f\in\mathcal{O}$, which is the ordered collection of first members of $\mathcal{F}$, we also have that the expression $f(t_1,t_2,...,t_n)$ is in $T(X)$. The \textbf{term algebra}, $\T(X)$ has this set as its underlying set and its operations are of the form $f^{\T(X)}(t_1,t_2,...,t_n) = f(t_1,t_2,...,t_n)$ for each $n$-ary $f \in \mathcal{O}$. Given a term $t \in T(X)$, the \textbf{corresponding $n$-ary term mapping} $t_n^\A:\A^n\rightarrow\A$ is a mapping such that if $t$ is a variable $x_i$, then $t_n^\A(a_1,...a_n) = a_i$, and if $t$ is of the form $f(t_1,t_2,...,t_k)$ for $k$-ary $f \in \mathcal{O}$, we have that $t_n^\A(a_1,a_2,...,a_n) = f^\A({t_1}_n^\A(a_1,a_2,...,a_n),{t_2}_n^\A(a_1,a_2,...,a_n),...,{t_k}_n^\A(a_1,a_2,...,a_n))$. 

Now an \textbf{equation} of type $\mathcal{F}$ is an expression of the form $s \approx t$ for $s,t \in T(X)$. An algebra $\A$ \textbf{satisfies} an equation $s \approx t$ if and only if, for all ordinals $n \leq \sigma$, for every choice of elements $a_1,a_2,...,a_n \in A$, we have that $s^\A_n(a_1,a_2,...,a_n) = t^\A_n(a_1,a_2,...,a_n)$. Since we are concerned with algebras whose underlying sets are composed of urelements, we can define a relation $\models$ between algebras whose underlying sets are made of urelements and equations of type $\mathcal{A}$ where $\A \models e$ if and only if $\A$ satisfies $e$. Furthermore, since algebras of urelements have rank in the superstructure of at most 7 and equations have rank of at most 2 when viewed as an ordered sequence of urlement symbols, relation $\models$ will be an element of the superstructure of rank at most 10. This allows us to take the ${}^*$-transform of relation $\models$ to find a notion of ${}^*$-satisfaction, which is a relation between internal algebras and equations. By transfer, we see that if $\A\models e$, we have that ${}^*\A \ {}^* \hspace{-.14cm} \models {}^*e$ and $e = {}^*e$ as long as no operations in the equation have infinite arity because, in this case, $e$ will be a finite ordered sequence of urlements. By transfer, we see that for any internal subalgebra $\I$ of ${}^*\A$, we have $\I \ {}^* \hspace{-.14cm} \models e$, where $e$ is $s \approx t$, if and only if for all internal ordinals $\eta \leq {}^*\sigma$, for all choices of elements $a_1,a_2,...,a_\eta \in I$, we have that ${}^*s^\A_\eta(a_1,a_2,...,a_\eta) = {}^*t^\A_\eta(a_1,a_2,...,a_\eta)$, which are two internal term mappings. We see that ${}^*t_\eta^\A:\A^\eta\rightarrow\A$ is a mapping such that if $t$ is a variable $x_i$, then $t_\eta^\A(a_1,...a_\eta) = a_i$, and if $t$ is of the form $f(t_1,t_2,...,t_\kappa)$ for $\kappa$-ary $f \in {}^*\mathcal{O}$, we have that $t_\eta^\A(a_1,a_2,...,a_\eta) = f^\A({t_1}_\eta^\A(a_1,a_2,...,a_\eta),{t_2}_\eta^\A(a_1,a_2,...,a_\eta),...,{t_\kappa}_\eta^\A(a_1,a_2,...,a_\eta))$. However, if $f$ is an image of a finitary element of $\mathcal{F}$, we have that an internal term mapping of finite arity will be a standard term mapping. For standard equation $e$, $\I \ {}^* \hspace{-.14cm}\models e$, this shows us that $\I$ satisfies $e$ in the standard sense.

\begin{example}\label{}
The above allows us to see that an internal group is also a group in the standard setting. As a group has only finitely many finitary operations and, thus, satisfies only finite equations. Given type $\mathcal{G} = ((1,0),(\iota,1),(\boldsymbol\cdot,2))$ with the set of operations $\mathcal{O} = (1,\iota,*)$, we see that a group satisfies the following equations:
$x \boldsymbol\cdot 1 \approx 1 \boldsymbol\cdot x \approx x$
$x \boldsymbol\cdot \iota(x) \approx \iota(x) \boldsymbol\cdot x \approx 1$
$x\boldsymbol\cdot(y\boldsymbol\cdot z) \approx (x \boldsymbol\cdot y) \boldsymbol\cdot z$.
Therefore, we see that an internal group will have type ${}^*\mathcal{G} = \mathcal{G} = ((1,0),(\iota,1),(\boldsymbol\cdot,2))$ and will ${}^*$-satisfy the following equations:
$x \boldsymbol\cdot 1 \approx 1 \boldsymbol\cdot x \approx x$
$x \boldsymbol\cdot \iota(x) \approx \iota(x) \boldsymbol\cdot x \approx 1$
$x\boldsymbol\cdot(y\boldsymbol\cdot z) \approx (x \boldsymbol\cdot y) \boldsymbol\cdot z$.
Consequently, an internal group is a group since ${}^*$-satisfy and satisfy mean the same for these equations.
\end{example}

A similar thing can be done for rings.

\begin{example}\label{}
    Vector spaces are a little different depending on if the field of scalars $\F$ is finite or infinite. In either case, vector space $\V$ can be described as an ordered tuple
    \[
    \V = (V, \mathcal{O}_\V) \text{ for } \mathcal{O}_\V = (0_\V,\iota_\V, ({s_r}_\V|r \in F) ,+_\V),
    \]
    where ${s_r}_\V$ is the scalar operator associated with the element $r \in F$. We see that $\V$ satisfies the (potentially infinite) set of equations: 
    \begin{enumerate}
        \item   $x_1 +_\V (x_2 +_\V x_3) \approx (x_1 +_\V x_2) +_\V x_3$.
        \item   $x_1 +_\V x_2 \approx x_2 +_\V x_1$.
        \item   $x_1 +_\V 0_\V \approx x_1$.
        \item   $x_1 +_\V \iota(x_1) \approx 0_\V$.
        \item   ${s_r}_\V(x_1 +_\V x_2) \approx {s_r}_\V(x_1) +_\V {s_r}_\V(x_2)$.
        \item   Given that $r \times_\F t = a$, we have the equation ${s_r}_\V({s_t}_\V(x_1)) \approx {s_a}_\V(x_1)$.
        \item   Given that $1$ is the multiplicative identity of $\F$, we have the equation ${s_1}_\V(x_1) \approx x_1$.
        \item   Given that $r +_\F t = b$, we have the equation ${s_r}_\V(x_1) +_\V {s_t}_\V(x_1) \approx {s_b}_\V(x_1)$.
    \end{enumerate}
    The first four of these equations establish the abelian group properties of the vectors, and the final four equations show the ring structure of the scalars.

    When transferred, we see that ${}^*\V$ satisfies the (potentially hyperinfinite) set of equations (for $r,t \in {}^*F$):
    \begin{enumerate}
        \item   $x_1 +_\V (x_2 +_\V x_3) \approx (x_1 +_\V x_2) +_\V x_3$.
        \item   $x_1 +_\V x_2 \approx x_2 +_\V x_1$.
        \item   $x_1 +_\V 0_\V \approx x_1$.
        \item   $x_1 +_\V \iota(x_1) \approx 0_\V$.
        \item   ${s_r}_\V(x_1 +_\V x_2) \approx {s_r}_\V(x_1) +_\V {s_r}_\V(x_2)$.
        \item   Given that $r {}^*\times_\F t = a$, we have the equation ${s_r}_\V({s_t}_\V(x_1)) \approx {s_a}_\V(x_1)$.
        \item   Given that $1$ is the internal multiplicative identity of $\F$, we have the equation ${s_1}_\V(x_1) \approx x_1$.
        \item   Given that $r {}^*+_\F t = b$, we have the equation ${s_r}_\V(x_1) +_\V {s_t}_\V(x_1) \approx {s_b}_\V(x_1)$.
    \end{enumerate}
    Since we know that for any $r,t \in F$, we have that $r \times_\F t = r {}^*\times_\F t$ and $r \times_\F t = r {}^*+_\F t$, that $1$ will be the internal multiplicative identity, and that ${}^*\V$ satisfies a superset of the defining equations of a vector space, then we know ${}^*\V$ is a member of the variety of vector spaces.
\end{example}

This neat agreement of satisfies and ${}^*$-satisfies breaks down somewhat for algebras with infinitary operations

\begin{example}\label{}
    Let $\B$ be a countably complete boolean algebra and define $\B' = (B: 0_\B,1_\B,c_\B,\wedge_{\B'},\vee_{\B'})$, where $\wedge_{\B'}$ and $\vee_{\B'}$ are the $\omega_0$-ary meet and join operations. We see that $\B'$ satisfies the following equations:
    \begin{enumerate}
        \item $c_\B(0_\B) \approx 1_\B$.
        \item $c_\B(c_\B(x_1)) \approx x_1$.
        \item $\wedge_{\B'}(x_1,x_1,x_1,...) \approx x_1$.
        \item $\vee_{\B'}(x_1,x_1,x_1,...) \approx x_1$.
        \item $\wedge_{\B'}(1_\B,x_2,x_3,...) \approx \wedge_{\B'}(x_2,x_3,...)$.
        \item $\vee_{\B'}(0_\B,x_2,x_3,...) \approx \vee_{\B'}(x_2,x_3,...)$.
        \item $\wedge_{\B'}(0_\B,x_2,x_3,...) \approx 0_\B$.
        \item $\vee_{\B'}(1_\B,x_2,x_3,...) \approx 1_\B$.
        \item $\wedge_{\B'}(x_1,c_\B(x_1),x_3,...) \approx 0_\B$.
        \item $\vee_{\B'}(x_1,c_\B(x_1),x_3,...) \approx 1_\B$.
        \item $\wedge_{\B'}(x_1,\vee_{\B'}(x_1,x_{1,2}...),\vee_{\B'}(x_1,x_{2,2},...),...) \approx x_1$.
        \item $\vee_{\B'}(x_1,\wedge_{\B'}(x_1,x_{1,2}...),\wedge_{\B'}(x_1,x_{2,2},...),...) \approx x_1$.
        \item $\wedge_{\B'}(\vee_{\B'}(x_{1,1},x_{1,2},...), \vee_{\B'}(x_{2,1},x_{2,2},...), ...) \approx \vee_{\B'}(\wedge_{\B'}(x_{1,1},x_{1,2},...), \wedge_{\B'}(x_{2,1},x_{2,2},...), ...)$.
        \item The infinite set of equations that rearrange the order of the arguments in $\wedge_{\B'}$. In other words, for every permutation $p$ of the natural numbers, equation $\wedge_{\B'}(x_{p(1)},x_{p(2)},...) \approx \wedge_{\B'}(x_1,x_2,...)$.
        \item The infinite set of equations that rearrange the order of the arguments in $\vee_{\B'}$. In other words, for every permutation $p$ of the natural numbers, equation $\vee_{\B'}(x_{p(1)},x_{p(2)},...) \approx \vee_{\B'}(x_1,x_2,...)$.
    \end{enumerate}
    We observe that every equation except the first two have infinitely many symbols and, hence, will not be identical to their ${}^*$-mappings. We see that ${}^*\B'$ ${}^*$-satisfies the following equations, where ${}^*\wedge_{\B'}$ and ${}^*\vee_{\B'}$ are ${}^*\omega_0$-ary instead of $\omega_0$-ary:
    \begin{enumerate}
        \item $c_\B(0_\B) \approx 1_\B$.
        \item $c_\B(c_\B(x_1)) \approx x_1$.
        \item ${}^*\wedge_{\B'}(x_1,x_1,x_1,...) \approx x_1$.
        \item ${}^*\vee_{\B'}(x_1,x_1,x_1,...) \approx x_1$.
        \item ${}^*\wedge_{\B'}(1_\B,x_2,x_3,...) \approx {}^*\wedge_{\B'}(x_2,x_3,...)$.
        \item ${}^*\vee_{\B'}(0_\B,x_2,x_3,...) \approx {}^*\vee_{\B'}(x_2,x_3,...)$.
        \item ${}^*\wedge_{\B'}(0_\B,x_2,x_3,...) \approx 0_\B$.
        \item ${}^*\vee_{\B'}(1_\B,x_2,x_3,...) \approx 1_\B$.
        \item ${}^*\wedge_{\B'}(x_1,c_\B(x_1),x_3,...) \approx 0_\B$.
        \item ${}^*\vee_{\B'}(x_1,c_\B(x_1),x_3,...) \approx 1_\B$.
        \item ${}^*\wedge_{\B'}(x_1,{}^*\vee_{\B'}(x_1,x_{1,2}...),{}^*\vee_{\B'}(x_1,x_{2,2},...),...) \approx x_1$.
        \item ${}^*\vee_{\B'}(x_1,{}^*\wedge_{\B'}(x_1,x_{1,2}...),{}^*\wedge_{\B'}(x_1,x_{2,2},...),...) \approx x_1$.
        \item ${}^*\wedge_{\B'}({}^*\vee_{\B'}(x_{1,1},x_{1,2},...), {}^*\vee_{\B'}(x_{2,1},x_{2,2},...), ...) \approx {}^*\vee_{\B'}({}^*\wedge_{\B'}(x_{1,1},x_{1,2},...), {}^*\wedge_{\B'}(x_{2,1},x_{2,2},...), ...)$.
        \item The hyperinfinite set of equations that rearrange the order of the arguments in ${}^*\wedge_{\B'}$. In other words, for every permutation $p$ of the nonstandard natural numbers, equation ${}^*\wedge_{\B'}(x_{p(1)},x_{p(2)},...) \approx {}^*\wedge_{\B'}(x_1,x_2,...)$.
        \item The hyperinfinite set of equations that rearrange the order of the arguments in ${}^*\vee_{\B'}$. In other word,s for every permutation $p$ of the nonstandard natural numbers, equation ${}^*\vee_{\B'}(x_{p(1)},x_{p(2)},...) \approx {}^*\vee_{\B'}(x_1,x_2,...)$.
    \end{enumerate}
    Algebra ${}^*\B'$ is not in itself an $\omega_0$-ary boolean algbra, nor does it satisfy the external equations using only the symbols in its language; however, this will be covered in much more depth in a later paper.
\end{example}

Occasionally, we will find it useful to apply the overspill principle of nonstandard methods, much like the way it is done in nonstandard analysis. A simple example of the overspill principle is that there is a nonstandard natural number that is divisible by every positive natural number. This is done by applying overspill to the fact that for each positive natural number, there exists a natural number that is divisible by all positive natural numbers less than or equal to it. The overspill principle states that if a property holds for all sufficiently large natural numbers, it will hold for some nonstandard hyperfinite number. 

The concurrency principle is equivalent to overspill, so we will take the time to define concurrency now. Relation $r$ with domain $A \in V(U)$ and codomain $B \in V(U)$ is \textbf{concurrent} if and only if for each finite subset $C \subseteq A$ there exists $r$-bound $b \in B$ such that $\forall \text{ } x \in C$, we have $x r b$. If we define a superstructure enlargement of $V(U)$, the concurrency principle holds for enlargement ${}^*V(U)$, which states that if $r \in V(U)$ is a concurrent relation with domain $A$ and codomain $B$, then there exists $b \in {}^*B$ such that for all $x \in A$, we have ${}^*x {}^*r b$.

In addition, the concept of a clone of an algebra will be touched on, so we will give a brief rundown here. For more detail, see \cite{Hobby}. A clone of algebra $\A$ with operations in $\mathcal{O}$, denoted $Clo \A$, is the collection of operations that contains every possible projection operation and is closed under finite compositions. We denote $Clo_n\A$ to be the collection of operations from $Clo\A$ that have arity $n$.

In the later portions of this paper, we will discuss direct products of algebras in relation to our nonstandard methods. Because we assume that all our standard algebras have urelements as the elements of their underlying sets, we will at times refrain from using the typical cartesian product when notating and computing direct products, instead using the category theoretical approach, which is isomorphic. 

For example, assume we are given a two-element boolean algebra, $\B = (\{b_1,b_2\}; 0_\B, 1_\B, \tilde{}_\B, \wedge_\B, \vee_\B)$, and a four-element boolean algebra, $\O = (\{o_1,o_2,o_3,o_4\}; 0_\O, 1_\O, \tilde{}_\O, \wedge_\O, \vee_\O)$, such that $0_\B() = b_1, \tilde{b_1}_\B = b_2,, \tilde{o_1}_\O = o_4$, and $ \tilde{o_2}_\O = o_3$. Also assume we are given the following:
    \begin{center}
    \hfill
    \renewcommand\arraystretch{1.3}
    \setlength\doublerulesep{0pt}
    \begin{tabular}{r|| c | c }
    $\wedge_\B$ & $b_1$ & $b_2$ \\
    \hline\hline
    $b_1$ & $b_1$ & $b_1$  \\ 
    \hline
    $b_2$ & $b_1$ & $b_2$ \\ 
    \hline
    \end{tabular}
    \hfill and \hfill
    \renewcommand\arraystretch{1.3}
    \setlength\doublerulesep{0pt}
    \begin{tabular}{r|| c | c }
    $\vee_\B$ & $b_1$ & $b_2$ \\
    \hline\hline
    $b_1$ & $b_1$ & $b_2$  \\ 
    \hline
    $b_2$ & $b_2$ & $b_2$ \\ 
    \hline
    \end{tabular}
    \hfill\phantom.
    \end{center}

    \begin{center}
    \hfill
    \renewcommand\arraystretch{1.3}
    \setlength\doublerulesep{0pt}
    \begin{tabular}{r|| c | c | c | c}
    $\wedge_\O$ & $o_1$ & $o_2$ & $o_3$ & $o_4$ \\
    \hline\hline
    $o_1$ & $o_1$ & $o_1$ & $o_1$ & $o_1$ \\ 
    \hline
    $o_2$ & $o_1$ & $o_2$ & $o_1$ & $o_2$ \\ 
    \hline
    $o_3$ & $o_1$ & $o_1$ & $o_3$ & $o_3$ \\ 
    \hline
    $o_4$ & $o_1$ & $o_2$ & $o_3$ & $o_4$ \\
    \hline
    \end{tabular}
 \hfill and \hfill
    \renewcommand\arraystretch{1.3}
    \setlength\doublerulesep{0pt}
    \begin{tabular}{r|| c | c | c | c}
    $\vee_\O$ & $o_1$ & $o_2$ & $o_3$ & $o_4$ \\
    \hline\hline
    $o_1$ & $o_1$ & $o_2$ & $o_3$ & $o_4$ \\ 
    \hline
    $o_2$ & $o_2$ & $o_2$ & $o_4$ & $o_4$ \\ 
    \hline
    $o_3$ & $o_3$ & $o_4$ & $o_3$ & $o_4$ \\ 
    \hline
    $o_4$ & $o_4$ & $o_4$ & $o_4$ & $o_4$ \\
    \hline
    \end{tabular}
    \hfill\phantom.
    \end{center}

We will construct direct product $\P$ to be some eight-element set. With care, we can use any eight urelements $\{s,t,u,v,w,x,y,z\}$ whose projections $\rho_\B,\rho_\O$ behave as follows: 

    \begin{center}
    \hfill
    \renewcommand\arraystretch{1.3}
    \setlength\doublerulesep{0pt}
    \begin{tabular}{r|| c }
    $\xi$ & $\rho_\B(\xi)$ \\
    \hline\hline
    $s$ & $b_1$ \\ 
    \hline
    $t$ & $b_1$ \\ 
    \hline
    $u$ & $b_1$ \\ 
    \hline
    $v$ & $b_1$ \\
    \hline
    $w$ & $b_2$ \\ 
    \hline
    $x$ & $b_2$ \\ 
    \hline
    $y$ & $b_2$ \\ 
    \hline
    $x$ & $b_2$ \\
    \hline
    \end{tabular}
 \hfill and \hfill
    \renewcommand\arraystretch{1.3}
    \setlength\doublerulesep{0pt}
    \begin{tabular}{r|| c }
    $\xi$ & $\rho_\O(\xi)$ \\
    \hline\hline
    $s$ & $o_1$ \\ 
    \hline
    $t$ & $o_2$ \\ 
    \hline
    $u$ & $o_3$ \\ 
    \hline
    $v$ & $o_4$ \\
    \hline
    $w$ & $o_1$ \\ 
    \hline
    $x$ & $o_2$ \\ 
    \hline
    $y$ & $o_3$ \\ 
    \hline
    $x$ & $o_4$ \\
    \hline
    \end{tabular}
    \hfill\phantom.
    \end{center}

This algebra will have associated binary operations defined as follows:

    \begin{center}
    \renewcommand\arraystretch{1.3}
    \setlength\doublerulesep{0pt}
    \begin{tabular}{r|| c | c | c | c | c | c | c | c}
    $\wedge_\P$ & $s$ & $t$ & $u$ & $v$ & $w$ & $x$ & $y$ & $z$ \\
    \hline\hline
    $s$ & $s$ & $s$ & $s$ & $s$ & $s$ & $s$ & $s$ & $s$ \\ 
    \hline
    $t$ & $s$ & $t$ & $s$ & $t$ & $s$ & $t$ & $s$ & $t$ \\ 
    \hline
    $u$ & $s$ & $s$ & $u$ & $u$ & $s$ & $s$ & $u$ & $u$ \\ 
    \hline
    $v$ & $s$ & $t$ & $u$ & $v$ & $s$ & $t$ & $u$ & $v$ \\
    \hline
    $w$ & $s$ & $s$ & $s$ & $s$ & $w$ & $w$ & $w$ & $w$ \\ 
    \hline
    $x$ & $s$ & $t$ & $s$ & $t$ & $w$ & $x$ & $w$ & $x$ \\ 
    \hline
    $y$ & $s$ & $s$ & $u$ & $u$ & $w$ & $w$ & $y$ & $y$ \\ 
    \hline
    $z$ & $s$ & $t$ & $u$ & $v$ & $w$ & $x$ & $y$ & $z$ \\
    \hline
    \end{tabular}
    \end{center}
    
and

    \begin{center}
    \renewcommand\arraystretch{1.3}
    \setlength\doublerulesep{0pt}
    \begin{tabular}{r|| c | c | c | c | c | c | c | c}
    $\vee_\P$ & $s$ & $t$ & $u$ & $v$ & $w$ & $x$ & $y$ & $z$ \\
    \hline\hline
    $s$ & $s$ & $t$ & $u$ & $v$ & $w$ & $x$ & $y$ & $z$ \\ 
    \hline
    $t$ & $t$ & $t$ & $v$ & $v$ & $x$ & $x$ & $z$ & $z$ \\ 
    \hline
    $u$ & $u$ & $v$ & $u$ & $v$ & $y$ & $z$ & $y$ & $z$ \\ 
    \hline
    $v$ & $v$ & $v$ & $v$ & $v$ & $z$ & $z$ & $z$ & $z$ \\
    \hline
    $w$ & $w$ & $x$ & $y$ & $z$ & $w$ & $x$ & $y$ & $z$ \\ 
    \hline
    $x$ & $x$ & $x$ & $z$ & $z$ & $x$ & $x$ & $z$ & $z$ \\ 
    \hline
    $y$ & $y$ & $z$ & $y$ & $z$ & $y$ & $z$ & $y$ & $z$ \\ 
    \hline
    $z$ & $z$ & $z$ & $z$ & $z$ & $z$ & $z$ & $z$ & $z$ \\
    \hline
    \end{tabular}
    \end{center}

Now to show that $\P = \Pi\{\B,\O\}$, which is the categorical direct product of $\B$ and $\O$, we must show that the following commutative diagram commutes for any boolean algebra $\A$ with homormorphisms $\beta_\B$ and $\beta_\O$ to $\B$ and $\O$, respectively.

\[\begin{tikzcd}
	&& \A \\
	\\
	\B && \P && \O
	\arrow["{\beta_\B}", from=1-3, to=3-1]
	\arrow["\phi"{description}, dashed, from=1-3, to=3-3]
	\arrow["{\beta_\O}"', from=1-3, to=3-5]
	\arrow["{\rho_\B}"', two heads, from=3-3, to=3-1]
	\arrow["{\rho_\O}", two heads, from=3-3, to=3-5]
\end{tikzcd}\]

To do this we must show that $\beta_\B(\xi) = \rho_\B(\phi(\xi))$ and $\beta_\O(\xi) = \rho_\O(\phi(\xi))$ for all $\xi \in A$. Let $a_1,a_2 \in A$. Assuming that $\beta_\B(a_1) = b_1$ and $\beta_\O(a_1) = o_1$, it is clear that $\phi(a_1)$ must be $s$. By a similar argument, we can fill out the following table:

    \begin{center}
    \renewcommand\arraystretch{1.3}
    \setlength\doublerulesep{0pt}
    \begin{tabular}{c | c || c }
    $\beta_\B(\xi)$ & $\beta_\O(\xi)$ & $\phi(\xi)$ \\
    \hline\hline
    $b_1$ & $o_1$ & $s$ \\ 
    \hline
    $b_1$ & $o_2$ & $t$ \\ 
    \hline
    $b_1$ & $o_3$ & $u$ \\ 
    \hline
    $b_1$ & $o_4$ & $v$ \\
    \hline
    $b_2$ & $o_1$ & $w$ \\ 
    \hline
    $b_2$ & $o_2$ & $x$ \\ 
    \hline
    $b_2$ & $o_3$ & $y$ \\ 
    \hline
    $b_2$ & $o_4$ & $z$ \\
    \hline
    \end{tabular}
    \end{center}

Now we must show that $\phi$ preserves the operations and is, thus, a homomorphism. For $a_1 \wedge a_2$, we know that $\beta_\B(a_1 \wedge a_2) = \beta_\B(a_1) \wedge \beta_\B(a_2)$ and $\beta_\O(a_1 \wedge a_2) = \beta_\O(a_1) \wedge \beta_\O(a_2)$, and we see that the following tables show the preservation of operations by $\phi$.

First for meets:
    \begin{center}
    \renewcommand\arraystretch{1.3}
    \setlength\doublerulesep{0pt}
    \begin{longtable}{c | c | c | c || c | c}
    $\beta_\B(a_1)$ & $\beta_\O(a_1)$ & $\beta_\B(a_2)$ & $\beta_\O(a_2)$ & $\phi(a_1 \wedge a_2)$ & $\phi(a_1) \wedge\phi(a_2)$ \\
    \hline\hline
    $b_1$ & $o_1$ & $b_1$ & $o_1$ & $s$ & $s$ \\ 
    \hline
    $b_1$ & $o_2$ & $b_1$ & $o_1$ & $s$ & $s$ \\ 
    \hline
    $b_1$ & $o_3$ & $b_1$ & $o_1$ & $s$ & $s$ \\ 
    \hline
    $b_1$ & $o_4$ & $b_1$ & $o_1$ & $s$ & $s$ \\ 
    \hline
    $b_2$ & $o_1$ & $b_1$ & $o_1$ & $s$ & $s$ \\ 
    \hline
    $b_2$ & $o_2$ & $b_1$ & $o_1$ & $s$ & $s$ \\ 
    \hline
    $b_2$ & $o_3$ & $b_1$ & $o_1$ & $s$ & $s$ \\ 
    \hline
    $b_2$ & $o_4$ & $b_1$ & $o_1$ & $s$ & $s$ \\ 
    \hline
    $b_1$ & $o_1$ & $b_1$ & $o_2$ & $s$ & $s$ \\ 
    \hline
    $b_1$ & $o_2$ & $b_1$ & $o_2$ & $t$ & $t$ \\ 
    \hline
    $b_1$ & $o_3$ & $b_1$ & $o_2$ & $s$ & $s$ \\ 
    \hline
    $b_1$ & $o_4$ & $b_1$ & $o_2$ & $t$ & $t$ \\ 
    \hline
    $b_2$ & $o_1$ & $b_1$ & $o_2$ & $s$ & $s$ \\ 
    \hline
    $b_2$ & $o_2$ & $b_1$ & $o_2$ & $t$ & $t$ \\ 
    \hline
    $b_2$ & $o_3$ & $b_1$ & $o_2$ & $s$ & $s$ \\ 
    \hline
    $b_2$ & $o_4$ & $b_1$ & $o_2$ & $t$ & $t$ \\
    \hline
    $b_1$ & $o_1$ & $b_1$ & $o_3$ & $s$ & $s$ \\ 
    \hline
    $b_1$ & $o_2$ & $b_1$ & $o_3$ & $s$ & $s$ \\ 
    \hline
    $b_1$ & $o_3$ & $b_1$ & $o_3$ & $u$ & $u$ \\ 
    \hline
    $b_1$ & $o_4$ & $b_1$ & $o_3$ & $u$ & $u$ \\
    \hline
    $b_2$ & $o_1$ & $b_1$ & $o_3$ & $s$ & $s$ \\ 
    \hline
    $b_2$ & $o_2$ & $b_1$ & $o_3$ & $s$ & $s$ \\ 
    \hline
    $b_2$ & $o_3$ & $b_1$ & $o_3$ & $u$ & $u$ \\ 
    \hline
    $b_2$ & $o_4$ & $b_1$ & $o_3$ & $u$ & $u$ \\
    \hline
    $b_1$ & $o_1$ & $b_1$ & $o_4$ & $s$ & $s$ \\ 
    \hline
    $b_1$ & $o_2$ & $b_1$ & $o_4$ & $t$ & $t$ \\ 
    \hline
    $b_1$ & $o_3$ & $b_1$ & $o_4$ & $u$ & $u$ \\ 
    \hline
    $b_1$ & $o_4$ & $b_1$ & $o_4$ & $v$ & $v$ \\
    \hline
    $b_2$ & $o_1$ & $b_1$ & $o_4$ & $s$ & $s$ \\ 
    \hline
    $b_2$ & $o_2$ & $b_1$ & $o_4$ & $t$ & $t$ \\ 
    \hline
    $b_2$ & $o_3$ & $b_1$ & $o_4$ & $u$ & $u$ \\ 
    \hline
    $b_2$ & $o_4$ & $b_1$ & $o_4$ & $v$ & $v$ \\
    \hline
    $b_1$ & $o_1$ & $b_2$ & $o_1$ & $s$ & $s$ \\ 
    \hline
    $b_1$ & $o_2$ & $b_2$ & $o_1$ & $s$ & $s$ \\ 
    \hline
    $b_1$ & $o_3$ & $b_2$ & $o_1$ & $s$ & $s$ \\ 
    \hline
    $b_1$ & $o_4$ & $b_2$ & $o_1$ & $s$ & $s$ \\
    \hline
    $b_2$ & $o_1$ & $b_2$ & $o_1$ & $w$ & $w$ \\ 
    \hline
    $b_2$ & $o_2$ & $b_2$ & $o_1$ & $w$ & $w$ \\ 
    \hline
    $b_2$ & $o_3$ & $b_2$ & $o_1$ & $w$ & $w$ \\ 
    \hline
    $b_2$ & $o_4$ & $b_2$ & $o_1$ & $w$ & $w$ \\
    \hline
    $b_1$ & $o_1$ & $b_2$ & $o_2$ & $s$ & $s$ \\ 
    \hline
    $b_1$ & $o_2$ & $b_2$ & $o_2$ & $t$ & $t$ \\ 
    \hline
    $b_1$ & $o_3$ & $b_2$ & $o_2$ & $s$ & $s$ \\ 
    \hline
    $b_1$ & $o_4$ & $b_2$ & $o_2$ & $t$ & $t$ \\
    \hline
    $b_2$ & $o_1$ & $b_2$ & $o_2$ & $w$ & $w$ \\ 
    \hline
    $b_2$ & $o_2$ & $b_2$ & $o_2$ & $x$ & $x$ \\ 
    \hline
    $b_2$ & $o_3$ & $b_2$ & $o_2$ & $w$ & $w$ \\ 
    \hline
    $b_2$ & $o_4$ & $b_2$ & $o_2$ & $x$ & $x$ \\
    \hline
    $b_1$ & $o_1$ & $b_2$ & $o_3$ & $s$ & $s$ \\ 
    \hline
    $b_1$ & $o_2$ & $b_2$ & $o_3$ & $s$ & $s$ \\ 
    \hline
    $b_1$ & $o_3$ & $b_2$ & $o_3$ & $u$ & $u$ \\ 
    \hline
    $b_1$ & $o_4$ & $b_2$ & $o_3$ & $u$ & $u$ \\
    \hline
    $b_2$ & $o_1$ & $b_2$ & $o_3$ & $w$ & $w$ \\ 
    \hline
    $b_2$ & $o_2$ & $b_2$ & $o_3$ & $w$ & $w$ \\ 
    \hline
    $b_2$ & $o_3$ & $b_2$ & $o_3$ & $y$ & $y$ \\ 
    \hline
    $b_2$ & $o_4$ & $b_2$ & $o_3$ & $y$ & $y$ \\
    \hline
    $b_1$ & $o_1$ & $b_2$ & $o_4$ & $s$ & $s$ \\ 
    \hline
    $b_1$ & $o_2$ & $b_2$ & $o_4$ & $t$ & $t$ \\ 
    \hline
    $b_1$ & $o_3$ & $b_2$ & $o_4$ & $u$ & $u$ \\ 
    \hline
    $b_1$ & $o_4$ & $b_2$ & $o_4$ & $v$ & $v$ \\
    \hline
    $b_2$ & $o_1$ & $b_2$ & $o_4$ & $w$ & $w$ \\ 
    \hline
    $b_2$ & $o_2$ & $b_2$ & $o_4$ & $x$ & $x$ \\ 
    \hline
    $b_2$ & $o_3$ & $b_2$ & $o_4$ & $y$ & $y$ \\ 
    \hline
    $b_2$ & $o_4$ & $b_2$ & $o_4$ & $z$ & $z$ \\
    \hline
    \end{longtable}
    \end{center}

And for joins as well:
    \begin{center}
    \renewcommand\arraystretch{1.3}
    \setlength\doublerulesep{0pt}
    \begin{longtable}{c | c | c | c || c | c}
    $\beta_\B(a_1)$ & $\beta_\O(a_1)$ & $\beta_\B(a_2)$ & $\beta_\O(a_2)$ & $\phi(a_1 \vee a_2)$ & $\phi(a_1) \vee\phi(a_2)$ \\
    \hline\hline
    $b_1$ & $o_1$ & $b_1$ & $o_1$ & $s$ & $s$ \\ 
    \hline
    $b_1$ & $o_2$ & $b_1$ & $o_1$ & $t$ & $t$ \\ 
    \hline
    $b_1$ & $o_3$ & $b_1$ & $o_1$ & $u$ & $u$ \\ 
    \hline
    $b_1$ & $o_4$ & $b_1$ & $o_1$ & $v$ & $v$ \\
    \hline
    $b_2$ & $o_1$ & $b_1$ & $o_1$ & $w$ & $w$ \\ 
    \hline
    $b_2$ & $o_2$ & $b_1$ & $o_1$ & $x$ & $x$ \\ 
    \hline
    $b_2$ & $o_3$ & $b_1$ & $o_1$ & $y$ & $y$ \\ 
    \hline
    $b_2$ & $o_4$ & $b_1$ & $o_1$ & $z$ & $z$ \\
    \hline
    $b_1$ & $o_1$ & $b_1$ & $o_2$ & $t$ & $t$ \\ 
    \hline
    $b_1$ & $o_2$ & $b_1$ & $o_2$ & $t$ & $t$ \\ 
    \hline
    $b_1$ & $o_3$ & $b_1$ & $o_2$ & $s$ & $s$ \\ 
    \hline
    $b_1$ & $o_4$ & $b_1$ & $o_2$ & $v$ & $v$ \\
    \hline
    $b_2$ & $o_1$ & $b_1$ & $o_2$ & $x$ & $x$ \\ 
    \hline
    $b_2$ & $o_2$ & $b_1$ & $o_2$ & $x$ & $x$ \\ 
    \hline
    $b_2$ & $o_3$ & $b_1$ & $o_2$ & $z$ & $z$ \\ 
    \hline
    $b_2$ & $o_4$ & $b_1$ & $o_2$ & $z$ & $z$ \\
    \hline
    $b_1$ & $o_1$ & $b_1$ & $o_3$ & $u$ & $u$ \\ 
    \hline
    $b_1$ & $o_2$ & $b_1$ & $o_3$ & $v$ & $v$ \\ 
    \hline
    $b_1$ & $o_3$ & $b_1$ & $o_3$ & $u$ & $u$ \\ 
    \hline
    $b_1$ & $o_4$ & $b_1$ & $o_3$ & $v$ & $v$ \\
    \hline
    $b_2$ & $o_1$ & $b_1$ & $o_3$ & $y$ & $y$ \\ 
    \hline
    $b_2$ & $o_2$ & $b_1$ & $o_3$ & $z$ & $z$ \\ 
    \hline
    $b_2$ & $o_3$ & $b_1$ & $o_3$ & $y$ & $y$ \\ 
    \hline
    $b_2$ & $o_4$ & $b_1$ & $o_3$ & $z$ & $z$ \\
    \hline
    $b_1$ & $o_1$ & $b_1$ & $o_4$ & $v$ & $v$ \\ 
    \hline
    $b_1$ & $o_2$ & $b_1$ & $o_4$ & $v$ & $v$ \\ 
    \hline
    $b_1$ & $o_3$ & $b_1$ & $o_4$ & $v$ & $v$ \\ 
    \hline
    $b_1$ & $o_4$ & $b_1$ & $o_4$ & $v$ & $v$ \\
    \hline
    $b_2$ & $o_1$ & $b_1$ & $o_4$ & $z$ & $z$ \\ 
    \hline
    $b_2$ & $o_2$ & $b_1$ & $o_4$ & $z$ & $z$ \\ 
    \hline
    $b_2$ & $o_3$ & $b_1$ & $o_4$ & $z$ & $z$ \\ 
    \hline
    $b_2$ & $o_4$ & $b_1$ & $o_4$ & $z$ & $z$ \\
    \hline
    $b_1$ & $o_1$ & $b_2$ & $o_1$ & $w$ & $w$ \\ 
    \hline
    $b_1$ & $o_2$ & $b_2$ & $o_1$ & $x$ & $x$ \\ 
    \hline
    $b_1$ & $o_3$ & $b_2$ & $o_1$ & $y$ & $y$ \\ 
    \hline
    $b_1$ & $o_4$ & $b_2$ & $o_1$ & $z$ & $z$ \\
    \hline
    $b_2$ & $o_1$ & $b_2$ & $o_1$ & $w$ & $w$ \\ 
    \hline
    $b_2$ & $o_2$ & $b_2$ & $o_1$ & $x$ & $x$ \\ 
    \hline
    $b_2$ & $o_3$ & $b_2$ & $o_1$ & $y$ & $y$ \\ 
    \hline
    $b_2$ & $o_4$ & $b_2$ & $o_1$ & $z$ & $z$ \\
    \hline
    $b_1$ & $o_1$ & $b_2$ & $o_2$ & $x$ & $x$ \\ 
    \hline
    $b_1$ & $o_2$ & $b_2$ & $o_2$ & $x$ & $x$ \\ 
    \hline
    $b_1$ & $o_3$ & $b_2$ & $o_2$ & $z$ & $z$ \\ 
    \hline
    $b_1$ & $o_4$ & $b_2$ & $o_2$ & $z$ & $z$ \\
    \hline
    $b_2$ & $o_1$ & $b_2$ & $o_2$ & $x$ & $x$ \\ 
    \hline
    $b_2$ & $o_2$ & $b_2$ & $o_2$ & $x$ & $x$ \\ 
    \hline
    $b_2$ & $o_3$ & $b_2$ & $o_2$ & $z$ & $z$ \\ 
    \hline
    $b_2$ & $o_4$ & $b_2$ & $o_2$ & $z$ & $z$ \\
    \hline
    $b_1$ & $o_1$ & $b_2$ & $o_3$ & $y$ & $y$ \\ 
    \hline
    $b_1$ & $o_2$ & $b_2$ & $o_3$ & $z$ & $z$ \\ 
    \hline
    $b_1$ & $o_3$ & $b_2$ & $o_3$ & $y$ & $y$ \\ 
    \hline
    $b_1$ & $o_4$ & $b_2$ & $o_3$ & $z$ & $z$ \\
    \hline
    $b_2$ & $o_1$ & $b_2$ & $o_3$ & $y$ & $y$ \\ 
    \hline
    $b_2$ & $o_2$ & $b_2$ & $o_3$ & $z$ & $z$ \\ 
    \hline
    $b_2$ & $o_3$ & $b_2$ & $o_3$ & $y$ & $y$ \\ 
    \hline
    $b_2$ & $o_4$ & $b_2$ & $o_3$ & $z$ & $z$ \\
    \hline
    $b_1$ & $o_1$ & $b_2$ & $o_4$ & $z$ & $z$ \\ 
    \hline
    $b_1$ & $o_2$ & $b_2$ & $o_4$ & $z$ & $z$ \\ 
    \hline
    $b_1$ & $o_3$ & $b_2$ & $o_4$ & $z$ & $z$ \\ 
    \hline
    $b_1$ & $o_4$ & $b_2$ & $o_4$ & $z$ & $z$ \\
    \hline
    $b_2$ & $o_1$ & $b_2$ & $o_4$ & $z$ & $z$ \\ 
    \hline
    $b_2$ & $o_2$ & $b_2$ & $o_4$ & $z$ & $z$ \\ 
    \hline
    $b_2$ & $o_3$ & $b_2$ & $o_4$ & $z$ & $z$ \\ 
    \hline
    $b_2$ & $o_4$ & $b_2$ & $o_4$ & $z$ & $z$ \\
    \hline
    \end{longtable}
    \end{center}

The following theorem is a well-known result in model theory (see \cite[p.103,ex.22-24]{Enderton}), which we will use in \ref{UrProdMonad} to construct isomorphic copies of direct products with underlying sets being sets of urelements instead of performing similar processes to our previous tedious example.

\begin{theorem}
    Let $J$ be an indexing set, and for each index $j \in J$, we take algebra $\A_j$. Let $\P$ denote the direct product $\Pi\{\A_j|j \in J\}$ and let $\sigma: P \rightarrow U$, where $U$ is a set of urelements of cardinality at least that of $P$\textbf{---}that is, $\Pi\{|A_j||j \in J\} \leq |U|$\textbf{---}and assume that $\sigma$ is injective. Then there is a unique algebraic structure $\V$ whose underlying set is $V = \sigma[P]$ such that $\sigma:\P \rightarrow \V$ is an isomorphism.
\end{theorem}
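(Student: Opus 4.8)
The plan is to transport the algebra structure of $\P$ across $\sigma$ directly, exploiting that $\sigma$ is a bijection onto its image. First I would replace the codomain $U$ by $V = \sigma[P]$, so that $\sigma \colon P \to V$ is a bijection and its inverse $\sigma^{-1}\colon V \to P$ is a genuine total function. The cardinality hypothesis $\Pi\{|A_j| \mid j \in J\} \le |U|$ is exactly what guarantees the existence of an injective $\sigma$ in the first place; once $\sigma$ is given it plays no further role. Throughout, I would use the representative $P = \Pi\{A_j \mid j\in J\}$ with its coordinatewise operations, $o_\P\big((p_i)_i\big)(j) = o_{\A_j}\big((p_i(j))_i\big)$, and take $\V$ to have the common signature of the $\A_j$.

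Next, for each operation symbol $o$ in that signature, with arity $\eta_o$ (allowing $\eta_o$ nullary or infinitary), I would \emph{define} $o_\V \colon V^{\eta_o} \to V$ by
\[
o_\V\big((v_i)_{i<\eta_o}\big) \;=\; \sigma\Big(o_\P\big((\sigma^{-1}(v_i))_{i<\eta_o}\big)\Big).
\]
This lands in $V$ and is well-defined precisely because $\sigma^{-1}$ is defined on all of $V$ and $\sigma$ on all of $P$. Setting $\V = (V, (o_\V)_{o\in\mathcal{O}})$, the equation $\sigma\big(o_\P(\bar p)\big) = o_\V(\sigma\circ\bar p)$ holds for every tuple $\bar p$ from $P$ by construction, so $\sigma$ is a homomorphism $\P \to \V$; since it is also a bijection, it is an isomorphism. (Being isomorphic to $\P$, the structure $\V$ automatically satisfies the same equations as $\P$, though the theorem does not ask for this.)

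For uniqueness, suppose $\V' = (V, (o_{\V'})_o)$ is any structure on the same carrier $V$ for which $\sigma\colon\P\to\V'$ is an isomorphism. Then for each $o$ and each tuple $\bar p$ from $P$ we must have $o_{\V'}(\sigma\circ\bar p) = \sigma\big(o_\P(\bar p)\big)$. Since $\sigma$ is onto $V$, every tuple $(v_i)_{i<\eta_o}$ over $V$ equals $\sigma\circ\bar p$ for the unique $\bar p = (\sigma^{-1}(v_i))_i$, so $o_{\V'} = o_\V$ for every $o$, whence $\V' = \V$.

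There is no real obstacle here; the only thing requiring care is bookkeeping — making the pushforward definition uniform over operations of all arities (the paper needs infinitary operations), and keeping in mind that $\sigma$ is merely injective, so the carrier of the transported algebra is the image $\sigma[P]$ rather than all of $U$. Everything else (that $o_\V$ is well-defined and valued in $V$, that $\sigma$ respects each operation, and that no other structure works) is immediate from the definitions.
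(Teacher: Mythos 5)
Your proposal is correct and follows essentially the same route as the paper: both transport the structure of $\P$ along $\sigma$ by defining $o_\V(v_1,v_2,\ldots) = \sigma\big(o_\P(\sigma^{-1}(v_1),\sigma^{-1}(v_2),\ldots)\big)$ and then observing that $\sigma$ is by construction a bijective homomorphism. The only difference is that you spell out the uniqueness argument explicitly, which the paper leaves implicit; that is a harmless (and welcome) addition, not a divergence in method.
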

\begin{proof}
    For each $o \in \mathcal{O}$, we define operation $o_\V$ on $V$ by the formula $o_\V(v_1,v_2,...) = \sigma(o_\P(\sigma^{-1}(v_1),\sigma^{-1}(v_2),...)$. This is well defined as $\sigma$ is bijective between its domain and range, and it clearly allows $\sigma$ to satisfy the homomorphism property for each operation of $\P$, thus showing that $\P \cong \V$.
\end{proof}

\subsection{Cardinal Arities vs Ordinal Arities}
Before moving into the main body of this work, where we use ordinal arities for our operations, we will briefly discuss the distinguishing features between ordinal arities and cardinal arities. We will also look at how this motivates us to use ordinal arities in our initial investigation.

The primary difference between ordinal and cardinal arities is how the entries are indexed, in which the operands, the arguments of the operation, are put into to determine the output of the function. An operation of algebra $\A$ that has finite cardinal arity n is a function from $\{\{a_1, a_2, ..., a_n\}| a_1, a_2, ..., a_n \in A\}$ to $A$, which allows for any size of sets from 1 to n of elements of $A$. This, of course, is not how operations are typically considered as there is no method of referring to specific entries. This is fine for commutative operations, but noncommutative operations will not work under such a language. An operation defined with ordinal arity $\eta$, instead, is a function from $A^\eta$ to $A$, allowing for specific entries in the operation to correspond with those entries in the product, providing us more ability to define operations.

The ordering of the operations of an algebra can also benefit from assuming an ordinal index as opposed to a cardinal value. Without the ability to index the operations, we lose the capacity to compare corresponding operations between two algebras. For example, rings have two binary operations, and without an index to tell us which is addition and which is multiplication, which are often simplified to symbolic names, it would be impossible to determine whether two rings are comparable with regards to the superring/subring relation.

In addition, because we mention retracts later in the paper, we will define what a retract is here. Subalgebra $\S$ of algebra $\A$ is a retract of $\A$ when there exists a homomorphism $f:A \rightarrow S$ such that $f$ is the identity on $S$.

\subsection{Extension Monads}
If $\mathbb{A}$ is an algebra with nonstandard enlargement ${}^*\mathbb{A}$, then the extension monad of $\mathbb{A}$ is $\hat{\A} = \bigcap\{\mathbb{B} \leq {}^*\mathbb{A}$ such that $\mathbb{B}$ is internal and $\mathbb{A}$ is a subreduct of $\mathbb{B} \}$ \cite{Insall}. Note that $\hat{\A}$ is a subalgebra of ${}^*\mathbb{A}$, but it is generally external\textbf{---}`unrecognizable by the enlarged superstructure'\textbf{---}due to a limitation of language, similar to the monads of nonstandard analysis (encoding the notion of two standard points being `infinitesimally close') and to the following external objects used in nonstandard analysis:

\begin{enumerate}
    \item the finite part of a nonstandard enlargement of a normed vector space or of a metric space (see Keisler and others \cite{KeislerHyperrealLine});
    \item the near-standard part of a nonstandard enlargement of a normed vector space or of a metric space (see Duanmu and others \cite{Ergo});
    \item the nonstandard hulls of uniform spaces (see Alagu and Kala, among others \cite{AlaguKala}).
\end{enumerate}

\section{Results}
First, we consider concepts that will be more familiar to readers of classical algebra and compute some extension monads in that context. Then we briefly discuss the notion of a retract. We finish by finding more general methods of computation of extension monads in the universal algebraic setting.

\begin{theorem}\label{ExtensionMonadEq}
    For any algebra $\A$, the extension monad of $\A$ in ${}^*\A$ is given by:
    \begin{eqnarray}
    \hat{\A} &=& \bigcap \{\B \leq {}^*\A | \B \text{ is an internal, hyperfinitely generated, }\nonumber\\
    & &\text{algebra, such that }\A \text{ is a subreduct of } \B \}\nonumber\\
    &=& \left<\bigcup\{\left<F\right>^{(int)} \leq {}^*\A | F \subseteq A \text{ is finite}\}\right> \nonumber\\
    &=& \bigcup\{\left<F\right>^{(int)} \leq {}^*\A|F \subseteq \hat{\A} \text{ is hyperfinite}\}.\nonumber
    \end{eqnarray}
\end{theorem}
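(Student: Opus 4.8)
The plan is to prove a cycle of inclusions among the four objects in the displayed chain. Write $X_1$ for the first intersection (over internal, hyperfinitely generated $\B\leq{}^*\A$ with $\A$ a subreduct), $X_2=\langle\bigcup\{\langle F\rangle^{(int)}:F\subseteq A\text{ finite}\}\rangle$, and $X_3=\bigcup\{\langle F\rangle^{(int)}:F\subseteq\hat\A\text{ hyperfinite}\}$, and establish $\hat\A\subseteq X_1\subseteq X_2\subseteq\hat\A$ together with $\hat\A\subseteq X_3\subseteq\hat\A$.

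Most of these inclusions are formal. The inclusion $\hat\A\subseteq X_1$ holds because the family indexing $X_1$ is contained in the family indexing $\hat\A$, so $X_1$ is an intersection over fewer sets. For $X_2\subseteq\hat\A$, note first that $A\subseteq\hat\A$ (every $\B$ in the defining family has $A\subseteq B$), and that $\hat\A$, being an intersection of internal subalgebras of ${}^*\A$, is closed under \emph{every internal term operation}: if every member of a family of sets is closed under a given operation, so is the intersection, and each internal $\B$ is closed under all internal term operations of ${}^*\A$ by transfer. Hence for finite $F\subseteq A$ we get $\langle F\rangle^{(int)}\subseteq\hat\A$, since by transfer the internal subalgebra generated by $F$ is exactly the set of values of internal terms on internal tuples from $F$; and since $\hat\A$ is itself a subalgebra, the subalgebra $X_2$ generated by $\bigcup_F\langle F\rangle^{(int)}$ also lies in $\hat\A$. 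The chain $\hat\A\subseteq X_3\subseteq\hat\A$ runs on the same fuel: each $x\in\hat\A$ lies in $\langle\{x\}\rangle^{(int)}$ and $\{x\}$ is a hyperfinite subset of $\hat\A$, while for hyperfinite $F\subseteq\hat\A$ the internal-term-closedness of $\hat\A$ again gives $\langle F\rangle^{(int)}\subseteq\hat\A$.

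The real content is $X_1\subseteq X_2$, and this is where the nonstandard machinery enters. I would argue by contraposition: suppose $x\in X_1$ but $x\notin X_2$. Since $X_2\supseteq\langle F\rangle^{(int)}$ for every finite $F\subseteq A$, this forces $x\notin\langle F\rangle^{(int)}$ for all finite $F\subseteq A$. Now the conditions on an unknown internal algebra $\C$, namely ``$\C$ is a hyperfinitely generated subalgebra of ${}^*\A$'', ``$a\in\C$'' for each $a\in A$, and ``$x\notin\C$'', are finitely satisfiable: the finite set $\{a_1,\dots,a_k\}\subseteq A$ is witnessed by $\C=\langle\{a_1,\dots,a_k\}\rangle^{(int)}$, which is internal, finitely (hence hyperfinitely) generated, contains the $a_i$, and omits $x$ by hypothesis. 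A realization $\B$ of the whole type exists by saturation of the enlargement — for countable $A$ this is precisely the overspill principle already discussed, applied to an internal extension of the sequence $n\mapsto\langle\{a_1,\dots,a_n\}\rangle^{(int)}$ along an enumeration of $A$. Such a $\B$ is internal, hyperfinitely generated, and has $A\subseteq B$, so $\A$ is a subreduct of $\B$ and $\B$ belongs to the family defining $X_1$; hence $x\in X_1\subseteq B$, contradicting $x\notin B$.

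I expect this last step to be the main obstacle, for two reasons. It is the only place the argument needs more than the transfer principle: one must know the ambient enlargement realizes a type of size $|A|$, which is the overspill/concurrency principle for countable $A$ but an extra saturation hypothesis in general. Second, one must be careful about the meaning of ``$\A$ is a subreduct of $\B$'' when $\A$ carries infinitary operations — the bare inclusion $A\subseteq B$ is not the whole story, and the ${}^*$-transform of an infinitary operation lengthens its argument list; this is also why $X_2$ needs the outer $\langle\cdot\rangle$ and why the third description is phrased with hyperfinite subsets of $\hat\A$ rather than finite subsets of $A$ (for a finitary signature $\bigcup\{\langle F\rangle^{(int)}:F\subseteq A\text{ finite}\}$ is already a subalgebra and both maneuvers are vacuous, but that fails in the infinitary case). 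I would dispatch these points by invoking the paper's definitions of subreduct and of $\langle\cdot\rangle^{(int)}$ directly and checking every closure claim at the level of internal term operations, where transfer applies cleanly.
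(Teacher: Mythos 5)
Your proposal is correct, and its engine is the same as the paper's: the one substantive step --- producing an internal, hyperfinitely generated $\B\leq{}^*\A$ containing $A$ but omitting a prescribed $x$ --- is exactly the paper's concurrency argument (the paper defines the internal relation $G\,r\,F\iff[x\notin\langle F\rangle^{(int)}\ \&\ G\subseteq F]$ and applies concurrency on the finite subsets of $A$; your finitely satisfiable type is the same device), and your caveat that this needs saturation beyond a bare enlargement is fair, since the paper's stated concurrency principle covers standard relations while the relation here is internal with the nonstandard parameter $x$; the paper implicitly assumes the stronger property, as it does again in its final theorem. Where you genuinely diverge is the organization and the third characterization. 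The paper proves each equality in both directions (for $\hat\A=X_1$ it quotes the fact that every internal algebra with $\A$ as subreduct contains a hyperfinitely generated one, itself a saturation fact), whereas your cycle $\hat\A\subseteq X_1\subseteq X_2\subseteq\hat\A$ spends the saturation argument once. For $X_3\subseteq\hat\A$ the paper covers a hyperfinite $F=\{a_0,\dots,a_{\nu-1}\}\subseteq\hat\A$ by finite sets $F_i\subseteq A$ with $a_i\in\langle F_i\rangle^{(int)}$ and argues $E=F_0\cup\dots\cup F_{\nu-1}$ is a hyperfinite subset of $A$, hence finite; this tacitly uses the union description $\hat\A=\bigcup\{\langle F\rangle^{(int)} : F\subseteq A\text{ finite}\}$ (proved only later, and hedged in the infinitary case) plus an internal choice of the $F_i$. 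Your route --- $F$ is internal and $F\subseteq\hat\A\subseteq B$ for every $\B$ in the defining family, so $\langle F\rangle^{(int)}\subseteq\B$ and hence $\langle F\rangle^{(int)}\subseteq\hat\A$ --- avoids both issues and is cleaner; I would only phrase that closure claim via transfer-minimality of the internally generated subalgebra rather than ``values of internal terms,'' which sidesteps any worry about term depth for infinitary operations.
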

\begin{proof}
    These proofs have been adapted from \cite{Insall} to fit our broader assumptions of arbitrarily many operations.
    \newline
    \textbf{Proof of the first equality:} Since $\hat{\A} = \bigcap\{\mathbb{B} \leq {}^*\mathbb{A}$ such that $\mathbb{B}$ is an internal, hyperfinitely generated algebra and $\mathbb{A}$ is a subreduct of $\B \}$, we can clearly see that $\bigcap \{\B \leq {}^*\A | \B$ is internal and $\A$ is a subreduct of $ \B \} \geq \hat{\A}$.

    For the other direction, we know that every internal algebra that has $\A$ as a subreduct contains a hyperfinitely generated, internal algebra that has $\A$ as a subreduct.
    \newline
    \newline
    \textbf{Proof of the second equality:} Select $a \in {}^*\A$ such that $a \not\in \left<\bigcup\{\left<F\right>^{(int)} \leq {}^*\A| F \subseteq A\text{ is finite}\}\right>$. Define relation $r$ by $GrF \iff [a \not\in \left<F\right>^{(int)}$ and $G \subseteq F]$ for hyperfinite $G,F \subseteq {}^*\A$. Since $r$ is internal and concurrent on the set of all finite subsets of $\A$, by the concurrency principle there exists hyperfinite $F \subseteq {}^*\A$ such that $GrF$ for all finite $G \subseteq A$. Therefore, $A \subseteq F$ and $a \not\in \left<F\right>^{(int)}$. Consequently, $a \not\in \hat{\A}$ because $\hat{A} \leq \left<F\right>^{(int)}$. Thus, $\hat{\A} \leq \left<\bigcup\{\left<F\right>^{(int)} \leq {}^*\A | F \subseteq A\text{ is finite}\}\right>$.
    
    For the other direction, the fact that each $\left<F\right>^{(int)}$ for finite $F \subseteq A$ is contained in any internal algebra such that $\A$ is a subreduct of that algebra makes the proof trivial.
    \newline
    \newline
    \textbf{Proof of the third equality:} Clearly, $\hat{\A} \subseteq \bigcup\{\left<F\right>^{(int)} \leq {}^*\A|F \subseteq \hat{\A}$ is hyperfinite $\}$. Therefore, we simply need to show that the reverse inequality is true. Let $F = \{a_0, a_1, ..., a_{\nu-1}\}$ for $\nu \in {}^*\N$ and take finite subsets of $A$, $F_0, F_1, ..., F_{\nu-1}$, such that $a_0 \in \left<F_0\right>^{(int)}$ and $a_1 \in \left<F_1\right>^{(int)}$ and ... $a_{\nu-1} \in \left<F_{\nu-1}\right>^{(int)}$.
    \newline
    Because $E = F_0 \cup F_1 \cup ... \cup F_{\nu-1}$ is hyperfinite and a subset of $A$, we know that $E$ is finite. Hence, $F \subseteq \left<E\right>^{(int)} \subseteq \hat{\A}$ because $\hat{\A} = \left<\bigcup\{\left<F\right>^{(int)} \leq {}^*\A | F \subseteq A\text{ is finite} \}\right>$. This lets us conclude that $\bigcup\{\left<F\right>^{(int)} \leq {}^*\A|F \subseteq \hat{\A}$ is hyperfinite$\} \subseteq \hat{\A}$.
\end{proof}

\begin{corollary}\label{MoreExtensionMonadEq}
    \begin{eqnarray}
    \hat{\A} &=& \bigcup \{\left<F\right>^{(int)}\leq {}^*\A|F \subseteq \hat{\A} \hbox{ is finite}\}\nonumber\\
    &=& \left<\bigcup \{\left<F\right>^{(int)}\leq {}^*\A|F \subseteq \hat{\A} \hbox{ is finite}\}\right>\nonumber\\
    &=& \bigcup \{\left<F\right>^{(int)}\leq {}^*\A|F \subseteq \hat{\A} \hbox{ is hyperfinite in }{}^*\A \}.\nonumber
    \end{eqnarray}
\end{corollary}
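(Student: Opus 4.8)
The plan is to read off all three equalities from Theorem~\ref{ExtensionMonadEq} by a short containment argument, using two elementary facts: every genuinely finite subset of ${}^*A$ is hyperfinite, and $\hat{\A}$ is a subalgebra of ${}^*\A$ (so it coincides with the substructure it generates). Write $X_1 = \bigcup\{\langle F\rangle^{(int)} \leq {}^*\A \mid F \subseteq \hat{\A} \text{ finite}\}$, $X_2 = \langle X_1\rangle$, and $X_3 = \bigcup\{\langle F\rangle^{(int)} \leq {}^*\A \mid F \subseteq \hat{\A} \text{ hyperfinite}\}$. The third displayed equality, $\hat{\A} = X_3$, is literally the third equality of Theorem~\ref{ExtensionMonadEq}, so nothing further is needed for it.

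For the first equality I would sandwich $X_1$ between $\hat{\A}$ and $X_3$. On one side, for any $b \in \hat{\A}$ the singleton $\{b\}$ is a finite subset of $\hat{\A}$ with $b \in \langle\{b\}\rangle^{(int)}$ (the internal generation starts from $A_0 = \{b\}$), so $\hat{\A} \subseteq X_1$. On the other side, any finite $F \subseteq \hat{\A}$ is in particular hyperfinite, and $\langle F\rangle^{(int)}$ is a subalgebra of ${}^*\A$ by construction, so $\langle F\rangle^{(int)}$ occurs among the sets whose union is $X_3$; hence $X_1 \subseteq X_3$. Combining with $X_3 = \hat{\A}$ yields $\hat{\A} \subseteq X_1 \subseteq X_3 = \hat{\A}$, so $X_1 = \hat{\A}$. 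The second equality is then immediate: $X_2 = \langle X_1\rangle = \langle \hat{\A}\rangle = \hat{\A}$, the last step because $\hat{\A}$ is a subalgebra of ${}^*\A$.

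There is essentially no real obstacle here beyond two bookkeeping points that must be made explicit: first, that ``finite'' is a special case of ``hyperfinite,'' which is exactly what lets the new finite-generation description collapse onto the hyperfinite one already supplied by Theorem~\ref{ExtensionMonadEq}; and second, that the internal generation operator $\langle\cdot\rangle^{(int)}$ inside ${}^*\A$ must be kept distinct from the external generation operator $\langle\cdot\rangle$ applied to the (external) set $X_1$ — in particular one should not treat $\hat{\A}$ as internal. With those noted, the corollary is a two-line consequence of the theorem.
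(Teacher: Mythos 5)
Your proposal is correct and matches what the paper intends: the corollary is stated without its own proof as an immediate consequence of Theorem \ref{ExtensionMonadEq}, and your sandwich argument (singletons give $\hat{\A}\subseteq X_1$, finite implies hyperfinite gives $X_1\subseteq X_3=\hat{\A}$, and $\langle\hat{\A}\rangle=\hat{\A}$ since $\hat{\A}$ is a subalgebra of ${}^*\A$) is exactly the kind of reading-off the paper relies on.
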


\begin{proposition}
    If $V$ is a variety of algebras of the form $\A = (A, \mathcal{O})$ without infinitary operations in its language, then $\hat\cdot|_\mathcal{O}$ and ${}^*\cdot|_\mathcal{O}$ are operators on that variety.
\end{proposition}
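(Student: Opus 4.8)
The plan is to reduce the statement to Birkhoff's characterisation of varieties together with the transfer-of-satisfaction machinery developed above. First I would invoke Birkhoff's theorem (see \cite{Burris2012}) to fix a set $\Sigma$ of identities axiomatising $V$. Because the language $\mathcal{O}$ contains no infinitary operation symbols, every term over $\mathcal{O}$ is built from finitely many operation applications and hence mentions only finitely many variables, so each $e \in \Sigma$, written $s \approx t$, is a finite string of urelement symbols; in particular ${}^*e = e$, exactly as observed in the introduction's discussion of $\models$ and ${}^*{}\hspace{-.14cm}\models$.

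Next I would fix $\A \in V$, so that $\A \models e$ for every $e \in \Sigma$, and apply the transfer principle to obtain ${}^*\A \ {}^* \hspace{-.14cm}\models {}^*e = e$ for every $e \in \Sigma$. The crucial step is then to pass from ${}^*$-satisfaction by ${}^*\A$ to genuine satisfaction by the reduct ${}^*\A|_{\mathcal{O}} = ({}^*A; \{{}^*f \mid f \in \mathcal{O}\})$. Writing $e$ as $s \approx t$ with $s,t$ terms in the variables $x_1,\dots,x_n$, unwinding the definition of ${}^*$-satisfaction shows that ${}^*\A \ {}^* \hspace{-.14cm}\models (s \approx t)$ says in particular that for the finite ordinal $\eta = n$ and every tuple $(a_1,\dots,a_n) \in ({}^*A)^n$ — every such finite tuple of elements of ${}^*A$ being internal — one has ${}^*s^{\A}_{n}(a_1,\dots,a_n) = {}^*t^{\A}_{n}(a_1,\dots,a_n)$. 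Since $s$ and $t$ involve only (images of) finitary operation symbols and $n$ is finite, the discussion above tells us that these internal term mappings are precisely the ordinary term mappings of $s$ and $t$ computed in the finitary algebra ${}^*\A|_{\mathcal{O}}$. Hence ${}^*\A|_{\mathcal{O}} \models (s \approx t)$; as $e$ was arbitrary, ${}^*\A|_{\mathcal{O}} \models \Sigma$, so ${}^*\A|_{\mathcal{O}} \in V$, and since the assignment $\A \mapsto {}^*\A|_{\mathcal{O}}$ is plainly well defined this shows ${}^*\cdot|_{\mathcal{O}}$ is an operator on $V$.

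For $\hat{\cdot}|_{\mathcal{O}}$ I would piggyback on the previous paragraph. By its definition $\hat{\A}$ is an intersection of subalgebras of ${}^*\A$ and is therefore itself a subalgebra of ${}^*\A$ (as recorded immediately after the definition of the extension monad); in particular $\hat{\A}$ is closed under each finitary operation ${}^*f$ with $f \in \mathcal{O}$, so $\hat{\A}|_{\mathcal{O}}$ is a subalgebra of ${}^*\A|_{\mathcal{O}}$. Identities are inherited by subalgebras, so from ${}^*\A|_{\mathcal{O}} \models \Sigma$ we get $\hat{\A}|_{\mathcal{O}} \models \Sigma$, i.e. $\hat{\A}|_{\mathcal{O}} \in V$; again the assignment is well defined, so $\hat{\cdot}|_{\mathcal{O}}$ is an operator on $V$. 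One could alternatively route this through Theorem~\ref{ExtensionMonadEq}, using that $\hat{\A}$ is a union of internal subalgebras of ${}^*\A$ each of which reduces to a member of $V$ by the argument of the second paragraph, but the direct route is shorter.

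The genuinely routine ingredients are Birkhoff's theorem and the fact that subalgebras inherit identities. The one step that needs real care — and the main obstacle — is reconciling ${}^*$-satisfaction of a finitary identity by ${}^*\A$, a notion defined by transfer over the possibly enormous and possibly infinitary language ${}^*\mathcal{O}$, with honest satisfaction of that identity by the small finitary reduct ${}^*\A|_{\mathcal{O}}$. This is precisely what the treatment of ${}^*$-satisfaction above is meant to supply, but in the write-up I would make three points explicit: that a finitary identity $e$ coincides with ${}^*e$; that quantifying over internal tuples of a fixed finite length is the same as quantifying over all tuples of that length; and that an internal term mapping assembled from finitary operation symbols and evaluated at finite arity is exactly the standard term mapping of the term in the reduct — so that the extra, possibly infinitary, operations of ${}^*\A$ can never enter the verification.
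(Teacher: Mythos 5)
Your proposal is correct and follows essentially the same route as the paper's own proof: fix the finitary identities defining $V$, note that each identity equals its ${}^*$-transform so transfer yields ${}^*$-satisfaction by ${}^*\A$ and hence ordinary satisfaction by ${}^*\A|_\mathcal{O}$, then use the fact that $\hat{\A}$ is a subalgebra of ${}^*\A$ and that identities pass to subalgebras. The extra care you take in reconciling ${}^*$-satisfaction with satisfaction of the finitary reduct is exactly the point the paper delegates to its introductory discussion of $\models$ versus ${}^*{}\hspace{-.14cm}\models$.
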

\begin{proof}
    Let algebra $\A = (A, \mathcal{O})$ be in variety $V$ with a collection of characterizing equations, $\mathcal{C}$. Since each equation in $\mathcal{C}$ is of finite length and composed of symbols that are urelements, we see that $\mathcal{C} \subseteq {}^*\mathcal{C}$, and thus by transfer, ${}^*\A$ ${}^*$-satisfies all the equations in $\mathcal{C}$. This means ${}^*\A$ satisfies all of the equations in $\mathcal{C}$. Since varieties are closed under subalgebras and $\hat{\A} \leq {}^*\A$, we have that $\hat{\A}$ also satisfies the equations of $\mathcal{C}$. So we find that ${}^*\A|_\mathcal{O}$ and $\hat{\A}|_\mathcal{O}$ are algebras in the language of $V$ and satisfy all the equations in $\mathcal{C}$. Therefore, ${}^*\A|_\mathcal{O}, \hat{\A}|_\mathcal{O} \in V$.
\end{proof}

In \cite{Insall}, it is shown that $\hat{\A} = \bigcup \{\left<F\right>^{(int)} \leq {}^*\A | F \subseteq A$ is finite$\}$; however, it fails to always be true when infinitely many operations are allowed.

The following is both an extension and patching of the similar proof found in \cite{Insall}.

\begin{proposition}
    $\hat{\A} = \bigcup \{\left<F\right>^{(int)} \leq {}^*\A | F \subseteq A$ is finite$\}$.
\end{proposition}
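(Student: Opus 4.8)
The plan is to reduce the asserted equality to the single claim that $U := \bigcup\{\left<F\right>^{(int)} \leq {}^*\A : F \subseteq A \text{ is finite}\}$ is closed under the internal operations of ${}^*\A$, and then to establish that closure by treating the ${}^*$-transforms of the operations of $\A$ and the new internal operations of nonstandard arity separately; the second case is where the argument of \cite{Insall} must be patched. By the second equality of Theorem~\ref{ExtensionMonadEq} we already know $\hat{\A} = \left<U\right>$, and $U \subseteq \hat{\A}$ is immediate: for finite $F \subseteq A$, every internal $\B \leq {}^*\A$ having $\A$ as a subreduct contains $F$ and is closed under the internal operations, hence contains $\left<F\right>^{(int)}$, and $\hat{\A}$ is the intersection of all such $\B$. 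Thus it remains to prove $\left<U\right> \subseteq U$, for which it suffices that $U$ be a subalgebra of ${}^*\A$: then $\left<U\right> = U$, and combining this with Theorem~\ref{ExtensionMonadEq} yields the stated identity.

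For closure of $U$ under an internal operation $o$ of standard finite arity $n$ — in particular under the ${}^*$-transforms of the finitary operations of $\A$ — take $x_1,\dots,x_n \in U$ and choose finite sets $F_1,\dots,F_n \subseteq A$ with $x_j \in \left<F_j\right>^{(int)}$. Then $F := F_1 \cup \dots \cup F_n$ is again a finite subset of $A$, each $x_j$ lies in the internal subalgebra $\left<F\right>^{(int)}$, and therefore so does $o(x_1,\dots,x_n)$, which is thus in $U$. This is essentially the argument of \cite{Insall}, and it already settles the statement whenever the signature of ${}^*\A$ contains no operations of nonstandard arity.

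The obstruction — and the reason the bare union can appear to fail once infinitely many operations are allowed — is that ${}^*\A$ also carries internal operations that are not ${}^*$-transforms of operations of $\A$ (they are indexed by the nonstandard part of the index set of ${}^*\mathcal{O}$), and such an operation $o$ may have arity $\eta$ a nonstandard hyperfinite number or a nonstandard ordinal; for it, running the argument above would force a union $\bigcup_{k<\eta} F_k$ over a hyperfinite index set, which need not be a finite subset of $A$. The patch I would carry out is the lemma: \emph{every internal subset $R$ of $U$ is contained in a single $\left<F\right>^{(int)}$ with $F \subseteq A$ finite.} The family $\{\left<F\right>^{(int)} : F \subseteq A \text{ finite}\}$ is directed, since $\left<F_1\right>^{(int)}, \left<F_2\right>^{(int)} \subseteq \left<F_1 \cup F_2\right>^{(int)}$, and $F \mapsto \left<F\right>^{(int)}$ is an internal map on the hyperfinite subsets of ${}^*A$, being the ${}^*$-transform of the standard ``subalgebra generated by'' operation; so if $R$ were contained in no $\left<F\right>^{(int)}$, the internal relation $\{(F,x) : F \text{ a hyperfinite subset of } {}^*A,\ x \in R,\ x \notin \left<F\right>^{(int)}\}$ would be concurrent over the standard finite subsets of $A$, and realizing it would produce an element of $R$ lying outside $\left<F\right>^{(int)}$ for every finite $F \subseteq A$ — contradicting $R \subseteq U$. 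Granting the lemma, closure of $U$ under an arbitrary internal operation $o$ of arity $\eta$ follows at once: the input tuple $(x_k)_{k<\eta}$ is an internal function, so its range $R$ is an internal subset of $U$, whence $R \subseteq \left<F\right>^{(int)}$ for some finite $F \subseteq A$, and so $o((x_k)_{k<\eta}) \in \left<F\right>^{(int)} \subseteq U$.

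The step I expect to be genuinely delicate is that last realization: the relation used there is internal but not standard, so invoking it needs slightly more than the bare concurrency principle recalled in the introduction — namely, enough saturation to meet simultaneously a family of at most $|A| + \aleph_0$ internal conditions of the form ``$x \in R$ and $x \notin \left<F\right>^{(int)}$.'' I would therefore either fix at the outset an enlargement saturated enough for the algebras under consideration, or restrict the statement to the generality the rest of the paper actually uses; given such saturation the lemma holds, $U$ is a subalgebra of ${}^*\A$, and the asserted equality follows.
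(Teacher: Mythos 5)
Your proof is sound, but it takes a genuinely different route from the paper's. The paper argues directly on elements: assuming some $x \in \hat{\A}$ lies outside $\C = \bigcup\{\left<F\right>^{(int)} \leq {}^*\A \mid F \subseteq A \text{ finite}\}$, it defines the relation $r$ with $(H,K) \in r$ iff $H \subseteq K$ and $x \notin \left<K\right>^{(int)}$, observes that $r$ is internal and concurrent on $\mathcal{P}_{fin}(A)$ (unions of finite subsets are bounds precisely because $x$ avoids every finitely generated internal subalgebra), and extracts a single hyperfinite $L$ with $A \subseteq L$ and $x \notin \left<L\right>^{(int)}$; since $\left<L\right>^{(int)}$ is an internal algebra having $\A$ as a subreduct, $\hat{\A} \leq \left<L\right>^{(int)}$ gives the contradiction. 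That argument never needs the union to be closed under the operations of ${}^*\A$ and does not route through Theorem~\ref{ExtensionMonadEq}. You instead prove that the union $U$ is itself a subalgebra of ${}^*\A$, disposing of the internal operations of hyperfinite arity via your lemma that every internal subset of $U$ is absorbed by a single $\left<F\right>^{(int)}$ with $F \subseteq A$ finite, and then conclude from $\hat{\A} = \left<U\right> = U$. Both proofs rest on exactly the same nonstandard tool used in exactly the same way: a concurrency-style argument over the external family of finite subsets of $A$ (of size $|A| + \aleph_0$) with an \emph{internal} relation, so your closing caveat about needing saturation beyond the concurrency principle as stated for standard relations is correct—but it applies equally to the paper's own proof and to the second equality of Theorem~\ref{ExtensionMonadEq}, so it is not an extra hypothesis you alone require. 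What your route buys is a structural explanation of why hyperfinite-arity internal operations cannot push the union out of itself (in effect re-deriving the content behind the third equality of Theorem~\ref{ExtensionMonadEq}), at the cost of depending on that theorem; the paper's route is shorter, self-contained, and reaches the contradiction in one application of concurrency.
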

\begin{proof}
    Let $\C = \bigcup \{\left<F\right>^{(int)} \leq {}^*\A | F \subseteq A$ is finite$\}$. Clearly, $\hat{\A} \geq \C$.
    Assume $\hat{\A} > \C$, then there exists some $x \in \hat{A} \setminus C$. Now define relation $r$ on $\mathcal{P}_{fin}(A)$, where $(H,K) \in r$ if and only if $x \notin \left < K \right >^{(int)}$ and $H \subseteq K$. This relation is concurrent because for any finite collection of finite subsets, their union will be a valid $r$-bound since $x$ is not in any internal subalgebra generated by a finite subset of $A$. Since $r$ is concurrent, there exists hyperfinite $L \subseteq {}^*A$ such that $x \notin \left < L \right >^{(int)}$ and $F \subseteq L$ for all finite subsets $F \subseteq A$. So $\left < L \right >^{(int)}$ is an internal subalgebra of ${}^*\A$ such that $\A$ is a subreduct of it. Therefore, $\hat{\A} \leq \left < L \right >^{(int)}$ and $x \notin \left < L \right >^{(int)}$, producing a contradiction. Thus, $\hat{\A} = \C$.
\end{proof}

Multiplication is the operation on the rationals that produces the much simpler extension monad, but a similar structure does appear in regard to addition. This paper is a portion of the doctoral dissertation \cite{Danielle}, with most of the results coming directly from there.

The next several results involve the computation of the extension monads of several related and well-known algebras, starting with the multiplicative group of nonzero rationals.

\begin{proposition}\label{MultGroupMonad}
    The extension monad of $\Q \setminus \{0\}$ as a group under multiplication is simply the group $\left <\Q  \setminus \{0\} \right>^{(int)}_{({}^*\Q;\cdot^{{}^*\Q})}$.
\end{proposition}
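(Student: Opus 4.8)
The plan is to read the statement off from the definition of the extension monad together with Theorem~\ref{ExtensionMonadEq}, after making precise what ``subreduct'' means for a multiplicative group. As a group, $\A = (\Q\setminus\{0\};\cdot)$ carries the finite, purely finitary signature $\mathcal{G} = ((1,0),(\iota,1),(\cdot,2))$ used in the group example of Section~1, so none of the three phenomena that can make ${}^*\mathcal{O}\neq\mathcal{O}$ occurs here: since $\mathcal{G}$ is finite with no infinitary operations, ${}^*\mathcal{G} = \mathcal{G}$ as an indexed family of operation symbols (as that example records), and hence ${}^*\A$ is again a group in exactly the signature $\mathcal{G}$, with underlying set ${}^*\Q\setminus\{0\}$ and operations the ${}^*$-transforms of $\cdot$, $\iota$, $1$. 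Consequently every internal $\B \leq {}^*\A$ has signature $\mathcal{G}$ as well, there are no operations to forget, and the clause ``$\A$ is a subreduct of $\B$'' collapses to ``$\A$ is a subalgebra of $\B$,'' i.e.\ to the single condition $\Q\setminus\{0\}\subseteq B$ (agreement of the operations on $\Q\setminus\{0\}$ being automatic, since the ${}^*$-transforms of $\cdot$, $\iota$, $1$ already restrict to $\cdot$, $\iota$, $1$ there).

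The first step, then, is to unwind the definition of the monad:
\[
\hat{\A} = \bigcap\bigl\{\B \leq {}^*\A \,\big|\, \B \text{ internal},\ \A \text{ a subreduct of } \B\bigr\} = \bigcap\bigl\{\B \,\big|\, \B \text{ an internal subgroup of } ({}^*\Q;\cdot),\ \Q\setminus\{0\} \subseteq B\bigr\},
\]
and the right-hand side is, by definition of the notation, the internal subgroup $\langle\Q\setminus\{0\}\rangle^{(int)}_{({}^*\Q;\cdot)}$ generated by $\Q\setminus\{0\}$ in $({}^*\Q;\cdot)$, which is exactly the claim. As an independent check I would also derive it from Theorem~\ref{ExtensionMonadEq}: there $\hat{\A} = \bigl\langle \bigcup\{\langle F\rangle^{(int)} \leq {}^*\A \mid F \subseteq \Q\setminus\{0\} \text{ finite}\}\bigr\rangle$, and since each $q \in \Q\setminus\{0\}$ lies in $\langle\{q\}\rangle^{(int)}$ while every $\langle F\rangle^{(int)}$ with $F \subseteq \Q\setminus\{0\}$ is contained in $\langle\Q\setminus\{0\}\rangle^{(int)}$, the internal subgroup generated by $\bigcup_F \langle F\rangle^{(int)}$ coincides with the one generated by $\Q\setminus\{0\}$.

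It is worth recording one sanity remark, as it pins down the content: the object being described strictly contains $\Q\setminus\{0\}$ — for instance $2^N \in \hat{\A}$ for every $N \in {}^*\N\setminus\N$, since an internal subgroup is closed under hyperfinite products — and, like extension monads in general, it is external, so $\langle\Q\setminus\{0\}\rangle^{(int)}$ here must be read as the intersection of all internal over-subalgebras rather than as a single internal set.

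The one point that requires genuine care, and the main obstacle, is the ``subreduct versus subalgebra'' bookkeeping: one must be sure that being a subreduct of an internal $\B$ forces $\B$ to be closed under $\iota$ and to contain $1$, not merely to be closed under $\cdot$. This is precisely what is bought by working with the full, finite, finitary group signature $\mathcal{G}$ and by the identity ${}^*\mathcal{G} = \mathcal{G}$; once it is in place, the proof is a direct unwinding of definitions and an appeal to Theorem~\ref{ExtensionMonadEq}.
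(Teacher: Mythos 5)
Your bookkeeping about the group signature being finite and finitary, so that ``subreduct'' collapses to ``subalgebra,'' is fine and consistent with the paper; but the step you lean on for the actual identification --- that the intersection of all internal subgroups of $({}^*\Q;\cdot)$ containing $\Q\setminus\{0\}$ is ``by definition of the notation'' the group $\left<\Q\setminus\{0\}\right>^{(int)}_{({}^*\Q;\cdot^{{}^*\Q})}$ --- begs the question. In the paper this notation denotes a concrete object, namely the closure $\M$ of $\Q\setminus\{0\}$ under hyperfinitely iterated internal multiplication (the paper's proof calls it ``the closure of the rational numbers by hyperfinite multiplication''), not the intersection of all internal over-subgroups; the intersection reading, which you adopt explicitly in your sanity remark, turns the proposition into a restatement of the definition of $\widehat{\Q\setminus\{0\}}$ with nothing left to prove. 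That the two objects coincide is precisely the content of the proposition, and it is not automatic: by Proposition \ref{AddGroupMonad}, the extension monad of $(\Q,+)$ contains elements such as $1/2^N$ for infinite $N$, which are not hyperfinite sums of standard rationals, so ``extension monad of $\A$'' and ``hyperfinite closure of $A$ under the internal operations'' can genuinely differ; for multiplication they happen to agree, and that agreement is what must be argued.

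Concretely, the missing half is the inclusion $\widehat{\Q\setminus\{0\}}\subseteq\M$: for each $\zeta\in{}^*(\Q\setminus\{0\})$ that is not a hyperfinite product of standard rationals, you must exhibit an internal subgroup of $({}^*\Q;\cdot)$ containing $\Q\setminus\{0\}$ and omitting $\zeta$. The paper does exactly this: writing $\zeta=\eta/\nu$ in lowest terms, $\zeta\notin\M$ forces some infinite prime $\psi\in{}^*\Z\setminus\Z$ to divide $\eta$ or $\nu$, and the internal subgroup generated by $\{x/y \mid x,y\in\{-\psi+1,\dots,-1,1,\dots,\psi-1\}\}$ contains every nonzero standard rational but excludes $\zeta$, since no element of it can have $\psi$ in its reduced numerator or denominator. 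Neither your main argument nor your ``independent check'' via Theorem \ref{ExtensionMonadEq} produces such a separating internal subgroup; the check merely compares two generated objects with one another and never identifies either one with the set of hyperfinite products of rationals. The only part your proposal actually establishes is the easy inclusion $\M\subseteq\widehat{\Q\setminus\{0\}}$ (internal subgroups are closed under hyperfinite products, as in your $2^N$ remark), which is also the easy half of the paper's proof.
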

\begin{proof}
    Let $\M = \left<\Q  \setminus \{0\} \right>^{(int)}_{({}^*\Q;\cdot^{{}^*\Q})}$ and let $\A$ be an internal extension of $\Q  \setminus \{0\}$ contained in ${}^*(\Q  \setminus \{0\})$. Clearly, $\M \leq \A$ by the closure of algebras, so $M \subseteq \widehat{Q  \setminus \{0\}}$. Now let $\zeta \in {}^*(\Q  \setminus \{0\})$ be such that $\zeta \notin \M$ and $\zeta = \frac{\eta}{\nu}$ is the most simplified form. Because $\M$ is the closure of the rational numbers by hyperfinite multiplication, either $\eta$ or $\nu$ must have a prime factor $\psi \in {}^*\Z \setminus \Z$. Also since the internal group $\left<\{\frac{x}{y}| x, y \in \{-\psi +1, -\psi + 2, ..., -1, 1, ..., \psi -2, \psi -1\}\}\right>^{(int)}_{({}^*\Q;\cdot^{{}^*\Q})}$ excludes $\zeta$ and contains $\Q  \setminus \{0\}$, we have $\M = \widehat{\Q \setminus \{0\}}$.
\end{proof}

As mentioned above, addition is the more complex operation.

\begin{proposition}\label{AddGroupMonad}
    The extension monad of $\Q$ as a group under addition is simply the group whose underlying set is $\left \{\frac{x}{y}|x \in {}^*\Z, y \in \left <\Z\setminus\{0\}\right >^{(int)}_{({}^*\Z;\cdot^{{}^*\Z})} \right\}$.
\end{proposition}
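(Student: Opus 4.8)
The plan is to read off $\hat{\Q}$ from the identity $\hat{\A}=\left\langle\bigcup\{\langle F\rangle^{(int)}\leq{}^*\A\mid F\subseteq A\text{ finite}\}\right\rangle$ of Theorem~\ref{ExtensionMonadEq}, applied to the additive group $\A=(\Q;\,0,\iota,+)$. First I would establish the easy containment. For each standard $b\in\Z_{\geq1}$ the singleton $\{1/b\}$ is a finite subset of $\Q$, and any internal additive subgroup $\B$ of ${}^*\Q$ containing $1/b$ must contain every internal-integer multiple of $1/b$: the internal set $\{n\in{}^*\Z\mid n/b\in\B\}$ is a subgroup of $({}^*\Z;+)$ containing $1$, hence equals ${}^*\Z$. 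Thus $\langle\{1/b\}\rangle^{(int)}=\tfrac1b{}^*\Z$, and so $\hat{\Q}\supseteq\bigcup_{b\geq1}\tfrac1b{}^*\Z$.

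For the reverse containment, let $F\subseteq\Q$ be finite and let $N\in\Z_{\geq1}$ be the least common multiple of the (finitely many, standard) denominators of the members of $F$. Then $F\subseteq\tfrac1N\Z\subseteq\tfrac1N{}^*\Z$, and $\tfrac1N{}^*\Z=\{q\in{}^*\Q\mid Nq\in{}^*\Z\}$ is an internal subalgebra of $({}^*\Q;\,0,\iota,+)$; since $\langle F\rangle^{(int)}$ is by definition the smallest internal subalgebra containing $F$, we get $\langle F\rangle^{(int)}\subseteq\tfrac1N{}^*\Z$. Taking the union over all finite $F\subseteq\Q$ yields $\bigcup_F\langle F\rangle^{(int)}\subseteq\bigcup_{b\geq1}\tfrac1b{}^*\Z$, and the outer $\langle\,\cdot\,\rangle$ in Theorem~\ref{ExtensionMonadEq} contributes nothing, since $\bigcup_{b\geq1}\tfrac1b{}^*\Z$ is already closed under $+$ and $\iota$ and contains $0$. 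Hence $\hat{\Q}=\bigcup_{b\geq1}\tfrac1b{}^*\Z=\left\{\tfrac{x}{y}\mid x\in{}^*\Z,\ y\in\Z\setminus\{0\}\right\}$, which is exactly the set in the statement once $\langle\Z\setminus\{0\}\rangle^{(int)}_{({}^*\Z;\cdot^{{}^*\Z})}$ is identified with $\Z\setminus\{0\}$ (already closed under the internal multiplication). Finally, $\tfrac{x_1}{b_1}+\tfrac{x_2}{b_2}=\tfrac{x_1b_2+x_2b_1}{b_1b_2}$ with $b_1b_2\in\Z_{\geq1}$, and the set plainly contains $0$ and is closed under $\iota$, so it is the underlying set of a subgroup of ${}^*\Q$, matching the phrasing of the proposition.

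The step carrying the real content is the upper bound: one has to be sure that generating an internal subalgebra from a standard finite set of rationals cannot manufacture an unlimited denominator. The argument above handles this by routing $\langle F\rangle^{(int)}$ through the genuine internal subalgebra $\tfrac1N{}^*\Z$; alternatively, since $F$ is finite (so ${}^*F=F$), transfer gives $\langle F\rangle^{(int)}={}^*(\langle F\rangle_{\Q})$, and $\langle F\rangle_{\Q}$ is a cyclic subgroup $\tfrac{a}{b}\Z$ of $\Q$ with $a,b$ standard, whose $^*$-transform is $\tfrac{a}{b}{}^*\Z$. Either route shows that the additive monad leaves the numerator free in ${}^*\Z$ but pins the denominator to a standard nonzero integer\textbf{---}in contrast to the multiplicative monad $\widehat{\Q\setminus\{0\}}$ of Proposition~\ref{MultGroupMonad}, which permits unlimited powers of standard primes.
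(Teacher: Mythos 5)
Your derivation of the set $\bigcup_{b\in\Z,\,b\geq 1}\tfrac{1}{b}{}^*\Z$ from Theorem~\ref{ExtensionMonadEq} is sound on its own terms (the forcing of $\tfrac{1}{b}{}^*\Z$ into every internal extension by internal induction, and the identification $\left<F\right>^{(int)}={}^*\left<F\right>=\tfrac{a}{b}{}^*\Z$ for finite $F\subseteq\Q$), but the last step, where you identify $\left<\Z\setminus\{0\}\right>^{(int)}_{({}^*\Z;\cdot^{{}^*\Z})}$ with $\Z\setminus\{0\}$, is wrong, and with it the claim that you have proved the stated proposition. The set $\Z\setminus\{0\}$ is external, so being closed under the binary multiplication does not make it an internal subalgebra; the internal closure contains every hyperfinite product of standard integers, for instance $2^\eta$ with $\eta\in{}^*\N\setminus\N$. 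That this is the intended reading is visible both in Proposition~\ref{MultGroupMonad}, whose proof treats the analogous closure as the closure of the rationals under hyperfinite multiplication, and in the paper's own proof of the present proposition, which excludes only denominators possessing an \emph{infinite prime factor}; $2^\eta$ has none, so it is an admissible denominator in the stated set. Hence the set you computed is strictly smaller than the set in the statement, and your proposal proves a different assertion, not the proposition as written.

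The discrepancy is mathematical, not merely notational, and your own technique exposes it. The paper's proof runs differently from yours: it checks that the displayed set is closed under the group operations and then, for a denominator $\eta$ outside the closure, excludes it via the internal group $\{\tfrac{x}{y}\mid x\in{}^*\Z,\ y\in\left<{}^*\Z\setminus\{\psi\}\cap(-\eta+1,\eta-1)\right>^{(int)}_{({}^*\Z;\cdot^{{}^*\Z})}\}$, $\psi$ an infinite prime factor of $\eta$; nowhere does it show that a denominator such as $2^\eta$ is forced into every internal additive extension of $\Q$ (closure of the candidate set under $+$ and $\iota$ does not give containment in $\hat{\Q}$). Your exclusion device answers that question negatively: $\tfrac{1}{\eta!}{}^*\Z$ is internal, contains $\Q$ because every standard integer divides $\eta!$, yet omits $\tfrac{1}{2^\eta}$, since the exponent of $2$ in $\eta!$ is $\sum_{k\geq1}\lfloor\eta/2^k\rfloor<\eta$ by transfer of Legendre's formula; so $\tfrac{1}{2^\eta}\notin\hat{\Q}$ even though it lies in the set named in the statement. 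In other words, the additive monad pins denominators (in lowest terms) to standard integers, and the set in Proposition~\ref{AddGroupMonad}---and the claim in Proposition~\ref{QRingMonad} that the ring monad has the same underlying set---overshoots. You should either justify the identification you assert, which the paper's usage of the notation rules out, or state plainly that your computation disagrees with the proposition as written and address elements of the form $\tfrac{1}{2^\eta}$ explicitly rather than absorbing the issue into a silent renaming of the denominator set.
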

\begin{proof}
    Let $x_1,x_2 \in \Z$ and $y_1,y_2 \in \Z \setminus \{0\}$. We see that $\frac{x_1}{y_1}-\frac{x_2}{y_2} = \frac{x_1y_2-x_2y_1}{y_1y_2}$. We also know that $x_1y_2-x_2y_2 \in {}^*\Z$ due to closure under addition and additive inverses and that $y_1y_2 \in \left<\Z\setminus\{0\}\right >^{(int)}_{({}^*\Z;\cdot^{{}^*\Z})}$ by the definition of the closure. It is clear that $\frac{0}{1}=0$ is in this set, as well, giving us closure under each of our group operations.
    
    We can also show that $\hat{\Q}$ excludes any denominator outside of \newline $\left<\Z\setminus\{0\}\right >^{(int)}_{({}^*\Z;\cdot^{{}^*\Z})}$ if we have $\eta \in {}^*\Z\setminus \left<\Z\setminus\{0\}\right >^{(int)}_{({}^*\Z;\cdot^{{}^*\Z})}$, we let $\psi$ be some infinite prime factor of $\eta$, and we consequently take an internal group whose underlying set is of the form $\{\frac{x}{y}|x \in {}^*\Z, y \in \left<{}^*\Z \setminus \{\psi \} \cap (-\eta +1, \eta -1)\right>^{(int)}_{({}^*\Z;\cdot^{{}^*\Z})} \}$. This is also closed under the group operations and excludes any fraction for which $\eta$ is in the denominator.
\end{proof}

It turns out that the underlying set of the extension monad of the rational numbers as a ring is the same as the extension monad of the additive group reduct.

\begin{proposition}\label{QRingMonad}
    The extension monad of the rational numbers as a ring has the same underlying set as the extension monad of the rational numbers as an additive group.
\end{proposition}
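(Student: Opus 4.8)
The plan is to prove the two underlying sets coincide by establishing two inclusions. Write $\widehat{\Q}_{+}$ for the extension monad of $\Q$ viewed as an additive group and $\widehat{\Q}_{\times}$ for the extension monad of $\Q$ viewed as a ring; each is a subalgebra of the corresponding reduct of ${}^*\Q$, so both underlying sets sit inside ${}^*\Q$. By Proposition \ref{AddGroupMonad} the underlying set of $\widehat{\Q}_{+}$ is $S = \{x/y \mid x \in {}^*\Z,\ y \in D\}$, where $D := \langle\Z\setminus\{0\}\rangle^{(int)}_{({}^*\Z;\cdot^{{}^*\Z})}$. I will show the underlying set of $\widehat{\Q}_{\times}$ is also $S$.

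First, $S$ is contained in the underlying set of $\widehat{\Q}_{\times}$. Here I would use the formula $\hat{\A} = \bigcup\{\langle F\rangle^{(int)} \leq {}^*\A \mid F \subseteq A \text{ finite}\}$ established above, applied once with ${}^*\Q$ regarded as a group and once as a ring. For any finite $F \subseteq \Q$ the set $F$ is internal, and the internal subring of ${}^*\Q$ generated by $F$ satisfies all the closure conditions defining the internal additive subgroup of ${}^*\Q$ generated by $F$ while containing $F$, hence contains that subgroup; taking unions over all finite $F \subseteq \Q$ yields $\widehat{\Q}_{+} \subseteq \widehat{\Q}_{\times}$ as subsets of ${}^*\Q$, i.e. $S \subseteq \widehat{\Q}_{\times}$.

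Second, the underlying set of $\widehat{\Q}_{\times}$ is contained in $S$. I would fix $\zeta \in {}^*\Q \setminus S$ and exhibit an internal subring of ${}^*\Q$ that has $\Q$ as a subreduct yet omits $\zeta$; this forces $\zeta \notin \widehat{\Q}_{\times}$. The proof of Proposition \ref{AddGroupMonad} already constructs an internal additive subgroup $G \leq {}^*\Q$ with $\Q \leq G$, $\zeta \notin G$, and underlying set of the shape $\{x/y \mid x \in {}^*\Z,\ y \in D'\}$, where $D' = \langle H\rangle^{(int)}_{({}^*\Z;\cdot^{{}^*\Z})}$ for a suitable internal set $H$ of integers (those bounded in absolute value by an infinite prime occurring in a denominator of $\zeta$). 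The point is that this same $G$ is already closed under the multiplication inherited from ${}^*\Q$: for $x_1/y_1, x_2/y_2 \in G$ one has $(x_1/y_1)(x_2/y_2) = (x_1 x_2)/(y_1 y_2)$ with $x_1 x_2 \in {}^*\Z$ and $y_1 y_2 \in D'$, since $D'$, being an internal subalgebra of $({}^*\Z;\cdot^{{}^*\Z})$, is closed under products; moreover $1 = 1/1 \in G$ because $1 \in D'$. Thus $G$, equipped with the ambient ring operations, is an internal subring of ${}^*\Q$ having $\Q$ as a subreduct, so $\widehat{\Q}_{\times} \leq G$ and $\zeta \notin \widehat{\Q}_{\times}$.

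The main obstacle I anticipate is the careful accounting in the second inclusion: verifying that the excluding subgroup supplied by Proposition \ref{AddGroupMonad} genuinely has the form $\{x/y \mid y \in D'\}$ with $D'$ an internal, multiplicatively closed set, and that it both contains $\Q$ and omits $\zeta$. This is essentially a re-reading of that proof, and it is cleanest to note along the way that $D$ and $D'$ are closed under taking divisors — a hyperfinite product of standard nonzero integers involves only finitely many distinct primes, each raised to a hyperfinite power — so that membership of a fraction $x/y$ in $S$ or in $G$ is decided by its lowest-terms denominator. Once the form of $G$ is pinned down, promoting it from a subgroup to a subring is immediate from the multiplicative closure of $D'$.
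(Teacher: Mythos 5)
Your proposal is correct and takes essentially the same route as the paper's own proof: both directions recycle the additive-group computation, first noting that its underlying set $\bigl\{x/y \mid x\in{}^*\Z,\ y\in\langle\Z\setminus\{0\}\rangle^{(int)}_{({}^*\Z;\cdot^{{}^*\Z})}\bigr\}$ lies in every internal subring of ${}^*\Q$ containing $\Q$, and then observing that the excluding internal additive subgroup constructed there is also closed under multiplication, so it witnesses exclusion from the ring monad as well. Your use of the union-over-finite-generating-sets formula for the first inclusion is only a minor variant of the paper's appeal to the earlier proof, not a different argument.
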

\begin{proof}
    Let $\Q$ be the rational numbers as a ring with the ordinary operations, and let $R = \left \{\frac{x}{y}|x \in {}^*\Z, y \in \left <\Z\setminus\{0\}\right >^{(int)}_{({}^*\Z;\cdot^{{}^*\Z})} \right\}$. Clearly, set $R$ is closed under addition, and it is also an easy exercise to see that it is closed under multiplication.

    It should be clear that $\R$ is a subring of any internal subring of ${}^*\Q$ that contains $Q$ by the proof of \ref{AddGroupMonad}. Now we merely show that for any element $\eta \notin {}^*Q \setminus R$, that element is not in $\hat{\Q}$. We find the same excluded elements as the previous proof, and the set $\{\frac{x}{y}|x \in {}^*\Z, y \in \left<{}^*\Z \setminus \{\psi \} \cap (-\eta +1, \eta -1)\right>^{(int)}_{({}^*\Z;\cdot^{{}^*\Z})} \}$ is closed under the ring operations, giving us $R = \hat{Q}$.
\end{proof}

The following corollary shows a noncommutative example. Even though finite matrices are not urelements, they are easily representable as urelements, allowing us to make the following conclusion.

\begin{corollary}\label{}
    The extension monad of the ring of n $\times$ n matrices whose entries are members of $\Q$ is the ring of n $\times$ n matrices whose entries are members of $\hat{\Q}$.
\end{corollary}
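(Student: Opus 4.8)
The plan is to reduce the corollary to the characterization, established in the Proposition above, that for a ring $\A$ one has $\hat{\A} = \bigcup\{\langle F\rangle^{(int)} \leq {}^*\A \mid F \subseteq A \text{ finite}\}$, and then to match up, entry by entry, the matrices in $\widehat{M_n(\Q)}$ with the matrices in $M_n(\hat{\Q})$. First I would fix, via the earlier Theorem, an injective coding $\sigma$ of $M_n(\Q)$ onto a set of urelements and transport the ring structure through $\sigma$; since $n$ is a standard finite number, transfer then identifies ${}^*M_n(\Q)$ with the internal ring $M_n({}^*\Q)$ of genuinely $n\times n$ matrices over ${}^*\Q$, and all of the work below takes place inside $M_n({}^*\Q)$, with $\sigma$ suppressed. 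I would also record two internal-ring facts obtained by transfer from standard ring theory: for any internal subring $S \leq {}^*\Q$, the set $M_n(S)$ is an internal subring of $M_n({}^*\Q)$; and the map $q \mapsto qI$ is an internal ring embedding ${}^*\Q \hookrightarrow M_n({}^*\Q)$. Write $E_{ij}$ for the matrix unit with a $1$ in position $(i,j)$ and $0$ elsewhere.

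For the inclusion $\widehat{M_n(\Q)} \subseteq M_n(\hat{\Q})$: given $A \in \widehat{M_n(\Q)}$, fix a finite $F \subseteq M_n(\Q)$ with $A \in \langle F\rangle^{(int)}$ and let $G \subseteq \Q$ be the finite set of all entries appearing in matrices of $F$. Then $M_n(\langle G\rangle^{(int)})$ is an internal subring of $M_n({}^*\Q)$ containing $F$, hence it contains $\langle F\rangle^{(int)}$, so every entry of $A$ lies in $\langle G\rangle^{(int)} \subseteq \hat{\Q}$; that is, $A \in M_n(\hat{\Q})$.

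For the reverse inclusion $M_n(\hat{\Q}) \subseteq \widehat{M_n(\Q)}$: given $A = (a_{ij}) \in M_n(\hat{\Q})$, choose for each pair $(i,j)$ a finite $F_{ij} \subseteq \Q$ with $a_{ij} \in \langle F_{ij}\rangle^{(int)}$. Since $q \mapsto qI$ is an internal ring homomorphism, applying it to an internal word witnessing $a_{ij} \in \langle F_{ij}\rangle^{(int)}$ shows $a_{ij}I \in \langle\{qI \mid q \in F_{ij}\}\rangle^{(int)}$; multiplying by $E_{ij}$ gives $a_{ij}E_{ij} \in \langle \{qI \mid q \in F_{ij}\} \cup \{E_{ij}\}\rangle^{(int)}$. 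Adding up the $n^2$ (standardly finitely many) matrices $a_{ij}E_{ij}$ yields $A \in \langle F\rangle^{(int)}$ for the finite set $F = \{qI \mid q \in \bigcup_{i,j}F_{ij}\} \cup \{E_{ij} \mid 1 \le i,j \le n\} \subseteq M_n(\Q)$, so $A \in \widehat{M_n(\Q)}$. Since $M_n(\Q)$ is plainly a subalgebra, hence a subreduct, of $M_n(\hat{\Q})$, the two inclusions identify $\widehat{M_n(\Q)}$ and $M_n(\hat{\Q})$ as rings.

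The step I expect to require the most care is the internal-word manipulation in the reverse inclusion: spelling out that membership in $\langle F_{ij}\rangle^{(int)}$ is witnessed by evaluating an internal (possibly hyperfinite-length) ring word at elements of $F_{ij}$, and that the internal homomorphism $q \mapsto qI$ commutes with evaluation of such words. This is a transfer of the elementary fact that ring homomorphisms respect term evaluation, but it is the place where one must check that the hyperfinite length of the word causes no trouble. The remaining ingredients — that $M_n$ of an internal subring is an internal subring, the matrix-unit decomposition $A = \sum_{i,j} a_{ij}E_{ij}$, and the coding of matrices as urelements — are routine.
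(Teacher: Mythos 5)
Your proof is correct, but it takes a genuinely different route from the paper. The paper does not argue the two inclusions directly: it first observes that, additively, the matrix ring is isomorphic to the $n^2$-fold direct power of $\Q$, invokes the theorem that extension monads commute with finite direct products to conclude that the additive extension monad is $\hat{\Q}^{n\times n}$, and then disposes of multiplication by noting that any element outside $\hat{\Q}^{n\times n}$ is already excluded by an internal \emph{ring} of the form $\left\{\frac{x}{y}\,\middle|\,x \in {}^*\Z,\ y \in \left<{}^*\Z \setminus \{\psi\} \cap (-\eta+1,\eta-1)\right>^{(int)}_{({}^*\Z;\cdot^{{}^*\Z})}\right\}^{n\times n}$, exactly the excluding sets used in the computation of $\hat{\Q}$ as a ring. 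You instead work entirely from the characterization $\hat{\A} = \bigcup\{\left<F\right>^{(int)} \mid F \subseteq A \text{ finite}\}$, proving one inclusion by passing to the internal ring of matrices over the internal subring generated by the entries, and the other by the scalar embedding $q \mapsto qI$ together with the matrix-unit decomposition $A=\sum_{i,j} a_{ij}E_{ij}$; the delicate step you flag (internal homomorphisms commuting with evaluation of internal, possibly hyperfinite, words) is a routine transfer and is sound. What the two approaches buy: the paper's argument is shorter given its earlier machinery and ties the corollary to the product theorem and the specific arithmetic of $\Q$ (infinite primes in denominators), whereas yours is self-contained, avoids both the product theorem and those explicit excluding sets, handles the urelement coding more carefully, and generalizes verbatim to show that the extension monad of the $n \times n$ matrix ring over any ring with identity is the matrix ring over its extension monad.
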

\begin{proof}
    Since, additively, the ring of n $\times$ n matrices whose entries are members of $\Q$ is isomorphic to the $n^2$ direct power of $\Q$, the extension monad of this additive group is $\hat{\Q}^{n\times n}$. Multiplication also has the same extension monad because any element not in $\hat{\Q}^{n\times n}$ will be excluded from some $\{\frac{x}{y}|x \in {}^*\Z, y \in \left<{}^*\Z \setminus \{\psi \} \cap (-\eta +1, \eta -1)\right>^{(int)}_{({}^*\Z;\cdot^{{}^*\Z})} \}^{n\text{x}n}$ for the same reasons as in \ref{QRingMonad}.
\end{proof}

The following corollary shows a semigroup example.

\begin{corollary}\label{}
    The underlying set of the extension monad of the additive semigroup of nonnegative rational numbers is $\hat{Q} \cap {}^*(0,\infty)$.
\end{corollary}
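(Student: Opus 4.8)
The plan is to prove the two set inclusions $\hat{S}\subseteq\hat{\Q}\cap{}^*(0,\infty)$ and $\hat{\Q}\cap{}^*(0,\infty)\subseteq\hat{S}$, where $S$ denotes the additive semigroup of nonnegative rationals and $\hat{\Q}$ is the extension monad of the rationals as an additive group (equivalently, by Proposition~\ref{QRingMonad}, as a ring), whose underlying set is described in Proposition~\ref{AddGroupMonad}.

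For the first inclusion, the positivity half is immediate: $S\subseteq[0,\infty)$ forces $\hat{S}\subseteq{}^*S\subseteq{}^*[0,\infty)$, and every nonzero element of $S$, hence of $\hat{S}$, is strictly positive (the point $0$ is treated at the end). For $\hat{S}\subseteq\hat{\Q}$ I would use that $S$ is a subreduct of the additive group $\Q$: if $\B$ is any internal subalgebra of ${}^*\Q$ having $\Q$ as a subreduct, then $\B\cap{}^*S$ is internal, is closed under addition (both $\B$ and ${}^*S$ are), and contains $S$ (since $S\subseteq\Q\subseteq\B$ and $S\subseteq{}^*S$), so it is an internal subsemigroup of ${}^*S$ containing $S$ as a subreduct; hence $\hat{S}\subseteq\B\cap{}^*S\subseteq\B$. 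Intersecting over all such $\B$ gives $\hat{S}\subseteq\hat{\Q}$, and combining with the positivity half finishes this direction.

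For the reverse inclusion, fix an arbitrary internal subsemigroup $\B\subseteq{}^*S$ with $S\subseteq\B$ and a positive $\zeta\in\hat{\Q}$; I must show $\zeta\in\B$. By transfer of the statement ``every positive-integer multiple of a semigroup element lies in the semigroup,'' $\B$ is closed under hyperfinite iterated addition: $n\,b\in\B$ for every $n\in{}^*\N\setminus\{0\}$ and $b\in\B$. Using Proposition~\ref{AddGroupMonad}, write $\zeta=x/y$ with $x\in{}^*\Z$ and $y$ a denominator of the permitted form, and, since $\zeta>0$, take $x,y>0$. If $1/y\in\B$, then $\zeta=x\cdot(1/y)\in\B$; in particular this is automatic when $y$ is a standard positive integer, since then $1/y\in\Q^{>0}\subseteq S\subseteq\B$. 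As $\B$ was arbitrary, $\zeta\in\bigcap\B=\hat{S}$; finally $0\in S\subseteq\hat{S}$ accounts for the endpoint, giving $\hat{S}=\hat{\Q}\cap{}^*(0,\infty)$ under the appropriate endpoint convention.

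The main obstacle is the last reduction: showing that an arbitrary positive $\zeta\in\hat{\Q}$ can be written as $x/y$ with $y$ a \emph{standard} positive integer, i.e. that every element of $\hat{\Q}$ has a standard denominator. This is where the precise content of Proposition~\ref{AddGroupMonad} (together with Theorem~\ref{ExtensionMonadEq}, which presents $\hat{\Q}$ as generated by the internal subalgebras of finite subsets of $\Q$) must be pinned down, and it is genuinely load-bearing: if $\hat{\Q}$ contained rationals with ``infinitely divisible'' denominators such as $1/2^{\nu}$ for an infinite $\nu$, the reverse inclusion would fail, since the internal subsemigroup $\{q\in{}^*S: v_2(q)\ge-\nu\}$ (with $v_2$ the $2$-adic valuation on ${}^*\Q$) contains $S$ yet omits $1/2^{\nu+1}$. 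So the crux is to confirm that the denominators occurring in $\hat{\Q}$ reduce to standard integers; once that is established the hyperfinite-sum argument closes the remaining details.
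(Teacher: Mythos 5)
Your two-inclusion setup is sound, and the first inclusion (intersecting an arbitrary internal group $\B\supseteq\Q$ with ${}^*S$ to get an internal subsemigroup containing $S$, hence $\hat{S}\subseteq\hat{\Q}\cap{}^*[0,\infty)$) is correct and in fact tidier than what the paper does. The genuine gap is exactly where you place it: your reverse inclusion is only carried out for positive $\zeta\in\hat{\Q}$ admitting a \emph{standard} denominator, and you never prove that every element of $\hat{\Q}$ is of that form. This cannot be discharged by quoting Proposition \ref{AddGroupMonad} verbatim, because the set written there allows denominators from the internal multiplicative closure $\left<\Z\setminus\{0\}\right>^{(int)}_{({}^*\Z;\cdot^{{}^*\Z})}$, which on any reasonable reading contains hyperfinite powers such as $2^{\nu}$ with $\nu$ infinite (any internal submonoid of ${}^*\Z$ containing $2$ contains $2^{m}$ for all $m\in{}^*\N$, by internal induction), and for such a denominator your $v_2$-cutoff subsemigroup really would defeat the argument. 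So, as submitted, the proof is incomplete at its load-bearing step.

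The missing lemma is true and can be filled in from the paper's general machinery rather than from Proposition \ref{AddGroupMonad}: by Theorem \ref{ExtensionMonadEq} (or the identity $\hat{\A}=\bigcup\{\left<F\right>^{(int)}\le{}^*\A \mid F\subseteq A\text{ finite}\}$ established after Corollary \ref{MoreExtensionMonadEq}), together with the fact that every finitely generated subgroup of $(\Q,+)$ is cyclic, one gets $\left<F\right>^{(int)}={}^*\left<F\right>=c\,{}^*\Z$ for some $c\in\Q$, hence $\hat{\Q}=\bigcup\{c\,{}^*\Z \mid c\in\Q_{>0}\}\cup\{0\}$-type unions; every element of $\hat{\Q}$ is thus a hyperinteger multiple of a standard rational, which is exactly the standard-denominator statement you need. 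It also settles your worry affirmatively: $1/2^{\nu}\notin\hat{\Q}$, as one can see directly since the internal group $\tfrac{1}{\eta!}\,{}^*\Z$ contains $\Q$ but omits $1/2^{\nu}$ whenever $\eta$ is infinite with $\eta\le\nu$. With that lemma inserted, your transfer/internal-induction step $\zeta=x\cdot(1/y)$ closes the reverse inclusion. Note that this is a genuinely different route from the paper's: the paper's proof of the corollary repeats the pattern of Proposition \ref{AddGroupMonad}, checking closure of $\hat{\Q}\cap{}^*(0,\infty)$ under addition and then excluding an element whose denominator has an infinite prime factor by exhibiting an internal algebra containing the semigroup and omitting it, whereas your argument reduces everything to abstract monad facts plus a precise description of $\hat{\Q}$ --- cleaner, but it stands or falls with that description, which must be pinned down as above. (Your handling of the endpoint $0$ is fine; the $(0,\infty)$ versus ``nonnegative'' mismatch is already in the paper's statement.)
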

\begin{proof}
    Set $\hat{Q} \cap {}^*(0,\infty)$ is closed under addition for the same reasons that the rational group is, just with no mention of subtraction or inverses.

    We can also show that $\hat{\Q} \cap {}^*(0,\infty)$ excludes any denominator outside of \newline $\left<\Z\setminus\{0\}\right >^{(int)}_{({}^*\Z;\cdot^{{}^*\Z})} \cap {}^*(0,\infty)$ if we have $\eta \in {}^*\Z\setminus \left<\Z\setminus\{0\}\right >^{(int)}_{({}^*\Z;\cdot^{{}^*\Z})}$, and we let $\psi$ be some infinite prime factor of $\eta$, and we consequently take an internal group whose underlying set is of the form $\{\frac{x}{y}|x \in {}^*\Z, y \in \left<{}^*\Z \setminus \{\psi \} \cap (-\eta +1, \eta -1)\right>^{(int)}_{({}^*\Z;\cdot^{{}^*\Z})} \} \cap {}^*(0,\infty)$. This is also closed under the group operations and excludes any fraction for which $\eta$ is in the denominator.
\end{proof}

The following corollary shows a nonassociative example.

\begin{corollary}\label{}
    The extension monad of the algebra of three-space vectors of elements of $Q$ with the operation defined as the cross product $o((a,b,c),(x,y,z)) = (bz-cy,-az+cx,ay-bx)$ is the algebra whose underlying set is $(\hat{Q})^3$.
\end{corollary}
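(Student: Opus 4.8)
The plan is to treat $\mathbb{V}$ as $\Q^{3}$ equipped with vector addition together with the cross product $o((a,b,c),(x,y,z))=(bz-cy,-az+cx,ay-bx)$, and to squeeze $\hat{\mathbb{V}}$ between $(\hat{\Q})^{3}$ from above and $\widehat{(\Q^{3},+)}$ from below, where $(\Q^{3},+)$ denotes the additive reduct. The conceptual point is that the cross product is \emph{not} a coordinatewise operation, so $\mathbb{V}$ itself is not a direct product of copies of $\Q$ as an algebra; but its additive reduct \emph{is} the direct cube of the additive group of $\Q$, and the cross product only ever needs to be shown not to enlarge the monad beyond $(\hat{\Q})^{3}$. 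As a first observation, the defining formula for $o$ uses only the ring operations of $\Q$, under which $\hat{\Q}$ is closed by Proposition~\ref{QRingMonad}, so $(\hat{\Q})^{3}$ is closed under ${}^{*}o$ and the transferred addition and is a legitimate candidate for the underlying set of $\hat{\mathbb{V}}$.

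For the inclusion $\hat{\mathbb{V}}\subseteq(\hat{\Q})^{3}$, let $v=(a,b,c)$ have some coordinate, say $a$, outside $\hat{\Q}$. The proof of Proposition~\ref{QRingMonad} produces an internal subring $S$ of ${}^{*}\Q$ with $\Q\subseteq S$ and $a\notin S$ --- concretely $S=\{x/y\mid x\in{}^{*}\Z,\ y\in\langle{}^{*}\Z\setminus\{\psi\}\cap(-\eta+1,\eta-1)\rangle^{(int)}_{({}^{*}\Z;\cdot)}\}$ for an infinite prime factor $\psi$ of a suitable witness $\eta$. Then $S\times S\times S$ with the operations inherited from ${}^{*}\mathbb{V}$ is internal, contains $\Q^{3}$, and is closed under ${}^{*}o$ and the transferred addition because $S$ is closed under the ring operations appearing in the relevant formulas; hence it is one of the internal algebras in the intersection defining $\hat{\mathbb{V}}$. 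Since $a\notin S$ forces $v\notin S\times S\times S$, we obtain $v\notin\hat{\mathbb{V}}$.

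For the inclusion $(\hat{\Q})^{3}\subseteq\hat{\mathbb{V}}$, I would first compute $\widehat{(\Q^{3},+)}$: because $(\Q^{3},+)$ is the direct product of three copies of $(\Q,+)$ and the extension monad commutes with finite direct products (as already used for the matrix ring above), $\widehat{(\Q^{3},+)}$ has underlying set $(\hat{\Q})^{3}$, using $\widehat{(\Q,+)}=\hat{\Q}$ from Proposition~\ref{AddGroupMonad}. Then I would observe that $\widehat{(\Q^{3},+)}\subseteq\hat{\mathbb{V}}$: every internal subalgebra $\mathbb{B}$ of ${}^{*}\mathbb{V}$ having $\mathbb{V}$ as a subreduct has, as its additive reduct, an internal subalgebra of ${}^{*}(\Q^{3},+)$ having $(\Q^{3},+)$ as a subreduct, so $\widehat{(\Q^{3},+)}\subseteq B$; intersecting over all such $\mathbb{B}$ gives $\widehat{(\Q^{3},+)}\subseteq\hat{\mathbb{V}}$. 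Together with the previous paragraph this yields $\hat{\mathbb{V}}=(\hat{\Q})^{3}$, and the operations of $\hat{\mathbb{V}}$ are then the restrictions of ${}^{*}o$ and the transferred addition, i.e.\ the evident vector operations on $(\hat{\Q})^{3}$.

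With the direct-product theorem in hand the remaining work is routine bookkeeping, so the step I expect to be delicate is the spot where the non-associative operation actually has to be engaged. In the route above that is the upper-bound argument, where one must check that $S\times S\times S$ is closed under the cross product; this is immediate from the formula but is the only place $o$ enters. If instead one wanted a self-contained proof of $(\hat{\Q})^{3}\subseteq\hat{\mathbb{V}}$ not appealing to the direct-product theorem, I would argue on the coordinate axes: with $e_{1},e_{2},e_{3}$ the unit vectors and $\mathbb{B}_{i}=\{t\in{}^{*}\Q\mid t\,e_{i}\in B\}$ for an internal subalgebra $\mathbb{B}$ as above, the cyclic identities $o(s\,e_{1},e_{2})=s\,e_{3}$, $o(s\,e_{2},e_{3})=s\,e_{1}$, $o(s\,e_{3},e_{1})=s\,e_{2}$ force $\mathbb{B}_{1}=\mathbb{B}_{2}=\mathbb{B}_{3}=:\mathbb{B}_{0}$, the identity $o(s\,e_{1},t\,e_{2})=st\,e_{3}$ makes $\mathbb{B}_{0}$ closed under the transferred multiplication, and closure of $B$ under the transferred addition along one axis makes $\mathbb{B}_{0}$ an internal subring of ${}^{*}\Q$ containing $\Q$, so $\hat{\Q}\subseteq\mathbb{B}_{0}$, whence $(\hat{\Q})^{3}=\hat{\Q}\,e_{1}+\hat{\Q}\,e_{2}+\hat{\Q}\,e_{3}\subseteq B$. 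There the genuinely delicate point is the interplay between these structure constants and vector addition, which is exactly what distinguishes this nonassociative example from the commutative and associative cases treated earlier.
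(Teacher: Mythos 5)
Your upper-bound half is essentially the paper's own argument: given a vector with a coordinate outside $\hat{\Q}$, the cube $S\times S\times S$ of the internal subring produced in the proof of \ref{QRingMonad} is an internal subalgebra of ${}^*\V$ that contains $\Q^3$, is closed under ${}^*o$ because the formula for $o$ only involves ring operations, and omits the offending vector, so $\hat{\V}\subseteq(\hat{\Q})^3$ (your $S\times S\times S$ is in fact cleaner than the paper's displayed excluding set, which carries over the positivity restriction from the semigroup corollary). Where you genuinely diverge is the lower bound: the paper disposes of it with the remark that $(\hat{\Q})^3$ is closed under the cross product ``by the same arguments previously used,'' i.e.\ it implicitly repeats the \ref{QRingMonad}-style observation that any internal algebra containing $\Q^3$ is closed under hyperfinite sums and hence contains each coordinate's monad, whereas you derive $(\hat{\Q})^3\subseteq\hat{\V}$ either from Theorem \ref{UrProdMonad} applied to the additive reduct, using $\widehat{(\Q,+)}=\hat{\Q}$ from \ref{AddGroupMonad} and then intersecting the additive reducts of the internal algebras in the defining family, or from the axis computation with the structure constants $o(s\,e_i,t\,e_j)=st\,e_k$. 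Both of your routes are correct and more explicit than the paper's sketch; the product-theorem route buys generality (it works for any algebra whose operations are definable coordinatewise from the ring operations), while the axis route is self-contained and is the only argument on the table that actually engages the cross product for this inclusion. One caveat applies equally to you and to the paper: the corollary names the cross product as \emph{the} operation, yet every available lower-bound argument (your two and the paper's appeal to closure under ``the usual ring operations'') needs vector addition in the signature, since with the cross product alone an internal subalgebra of $({}^*(\Q^3);{}^*o)$ containing $\Q^3$ need not be closed under addition and the inclusion $(\hat{\Q})^3\subseteq\hat{\V}$ would require a separate justification. You state that reading explicitly at the outset, so your proof is sound under the same interpretation the paper tacitly adopts.
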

\begin{proof}
    By the same arguments previously used and the fact that $\hat{Q}$ is closed under the usual ring operations, we see that $(\hat{Q})^3$ is closed under the cross product.

    If there is an element $x \notin (\hat{Q})^3$, then by a similar argument as \ref{QRingMonad}, we know there exists a nonstandard prime in the denominator of one of the entries, and so we can construct an algebra whose underlying set is $(\{\frac{x}{y}|x \in {}^*\Z\cap {}^*(0,\infty), y \in \left<{}^*\Z \cap {}^*(0,\infty) \setminus \{\psi \} \cap (0, \eta -1)\right>^{(int)}_{({}^*\Z;\cdot^{{}^*\Z})} \})^3$ such that $x$ is not in that algebra. This shows us our desired result.
\end{proof}

Another intuitive train of thought about extension monads is that for a subalgebra, its extension monad will simply be the intersection of its enlargement and the extension monad of the superalgebra.

\begin{proposition}\label{}
    If $\A \leq \B$, then ${}\hat{\A} = \hat{\B}\cap{}^*\A$.
\end{proposition}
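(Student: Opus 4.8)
The plan is to prove the two inclusions separately, taking the easy one from the defining intersection and the hard one from the union description of the monad in Theorem~\ref{ExtensionMonadEq} and Corollary~\ref{MoreExtensionMonadEq}, pushed through transfer. Throughout I use that $\A \leq \B$ gives ${}^*\A \leq {}^*\B$ by transfer, and that for a finite standard $F \subseteq B$ the internal subalgebra $\langle F\rangle^{(int)}$ is just ${}^*$ of the standard subalgebra it generates, so that it makes no difference whether we generate inside ${}^*\A$ or ${}^*\B$ and, for $F \subseteq A$, whether we generate inside $\A$ or $\B$.

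For $\hat\A \subseteq \hat\B \cap {}^*\A$ I would argue directly from the intersection definitions. If $\mathbb D \leq {}^*\B$ is internal with $\B$ a subreduct, then $\mathbb D \cap {}^*\A$ is an internal subalgebra of ${}^*\A$ containing $A$ (since $A \subseteq B \subseteq \mathbb D$), so $\A$ is a subreduct of it and therefore $\hat\A \subseteq \mathbb D \cap {}^*\A \subseteq \mathbb D$. As this holds for every such $\mathbb D$, intersecting over all of them gives $\hat\A \subseteq \hat\B$, and since $\hat\A \subseteq {}^*\A$ always holds, we obtain $\hat\A \subseteq \hat\B \cap {}^*\A$.

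For the reverse inclusion I would take $x \in \hat\B \cap {}^*\A$ and aim to place $x$ in some $\langle F'\rangle^{(int)}$ with $F' \subseteq A$ finite, which by the established equality $\hat\A = \bigcup\{\langle F\rangle^{(int)} : F \subseteq A \text{ finite}\}$ puts $x$ in $\hat\A$. Since $x \in \hat\B$, the same equality for $\B$ yields a finite $F \subseteq B$ with $x \in \langle F\rangle^{(int)} = {}^*\langle F\rangle_\B$; combining this with $x \in {}^*\A$ and the transfer identity ${}^*\langle F\rangle_\B \cap {}^*A = {}^*(\langle F\rangle_\B \cap A)$ gives $x \in {}^*(\langle F\rangle_\B \cap A)$. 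Writing $S = \langle F\rangle_\B \cap A$, this $S$ is a subalgebra of $\A$ and $x \in {}^*S$, so the whole reverse inclusion reduces to showing that every such $x$ already lies in $\hat\A$.

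The main obstacle is exactly this last reduction, and it turns on whether $S = \langle F\rangle_\B \cap A$ is \emph{finitely generated} as a subalgebra of $\A$: if $S = \langle F'\rangle_\A$ for some finite $F' \subseteq A$, then ${}^*S = \langle F'\rangle^{(int)} \subseteq \hat\A$ and we are done, whereas if $S$ is not finitely generated a ``deep'' hyperfinite element of ${}^*S$ need not belong to $\hat\A$. I would therefore concentrate the real work on this finite-generation step. It is automatic, for example, when $\A$ is a retract of $\B$ (a retraction $\pi : \B \to \A$ sends $F$ to a finite $\pi[F] \subseteq A$ with $\langle F\rangle_\B \cap A \subseteq \langle \pi[F]\rangle_\A$) or when $\B$ lies in a variety whose finitely generated algebras have only finitely generated subalgebras, but it is the genuinely delicate point for an arbitrary subalgebra $\A \leq \B$. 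I note in particular that the alternative of running a concurrency argument over the finite subsets of $B$, in the style of the proof of $\hat\A = \bigcup\{\langle F\rangle^{(int)}\}$, does not bypass the difficulty: concurrency of the relation ``$H \subseteq K$ and $x \notin \langle K\rangle^{(int)}$'' on finite subsets of $B$ is equivalent to the very conclusion $x \notin \hat\B$, so that route is circular and the finite-generation obstruction must still be discharged there.
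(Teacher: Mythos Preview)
Your first inclusion is fine and essentially matches the paper. For the reverse inclusion you correctly isolate the crux: after writing $x\in{}^*S$ with $S=\langle F\rangle_\B\cap A$, you need $S$ to be finitely generated in $\A$, and you are right to flag this as the delicate point. In fact it cannot be discharged in general, because the proposition as stated is false. Take $\B$ to be the free group on two generators $a,b$ and $\A=[\B,\B]$ its commutator subgroup. Then $\B$ is finitely generated, so $\hat\B={}^*\B$ by Theorem~\ref{FinitelyGeneratedEquiv}, whence $\hat\B\cap{}^*\A={}^*\B\cap{}^*\A={}^*\A$; but $\A$ is free of infinite rank, hence not finitely generated, so $\hat\A\subsetneq{}^*\A$ by the same theorem. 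Thus $\hat\A\neq\hat\B\cap{}^*\A$. Your obstruction is exactly this phenomenon: with $F=\{a,b\}$ one gets $\langle F\rangle_\B\cap A=A$, which is not finitely generated, so your reduction cannot terminate.

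The paper's argument takes a different route, working directly from the intersection definition: for each internal $\T$ with $\A\leq\T\leq{}^*\A$ it attempts to produce an internal $\R'$ with $\B\leq\R'\leq{}^*\B$ and $\R'\cap{}^*\A=\T$, via $\R'=\langle T\cup(B\setminus{}^*A)\rangle^{(int)}$. The justification given (``$\T$ is closed, so we only generate elements of ${}^*\B\setminus{}^*\A$'') does not hold up: closure of $T$ controls only operations whose inputs all lie in $T$, not mixed inputs drawn from $T$ and $B\setminus A$, and those can certainly land in ${}^*A\setminus T$. In the free-group example $B\setminus A$ already contains the generators $a,b$, so $\R'={}^*\B$ regardless of $T$, and the construction fails. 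So the gap you identified is genuine and is shared, in a different guise, by the paper's argument; the statement needs an additional hypothesis, and the one your reduction naturally isolates (every finitely generated subalgebra of $\B$ meets $\A$ in a finitely generated subalgebra of $\A$) is precisely what makes both arguments go through.
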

\begin{proof}
    Clearly, ${}\hat{\A} \leq \hat{\B}\cap{}^*\A$ because ${}\hat{\A} \leq \hat{\B}$ and ${}\hat{\A} \leq {}^*\A$. Take $x \in (\hat{\B} \cap {}^*\A)$. Then, $\forall R \in \{S| \B \leq \S \ {}^*\hspace{-.06in}\leq {}^*\B\}$, and we have $x \in R \cap {}^*A$. Since $B \cap {}^*A = A$, we can say that $\A \leq \R\cap{}^*\A$, and for all $\T$ such that $\A \leq \T \ {}^*\hspace{-.06in}\leq {}^*\A$, we can find an algebra $\R' \in \{S| \B \leq \S \ {}^*\hspace{-.06in}\leq {}^*\B\}$ such that $\R'\cap {}^*\A = \T$ by taking $\left < T \cup (B\setminus {}^*A) \right >^{(int)}$ because $\T$ is an internal algebra and, hence, is closed as a subalgebra. Therefore, we only generate elements of ${}^*\B \setminus {}^*\A$, and we have that $x \in \hat{\A}$.
\end{proof}


\begin{lemma}\label{FiniteCheckEq}
    For algebra $\A$ generated by finite $F$, we have $\hat{\A} = \check{F}$.
\end{lemma}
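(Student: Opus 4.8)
The plan is to identify $\check F$ with the whole of ${}^*\A$ and then squeeze it between two easy inclusions. Recall from the construction $\check F=\bigcup_{n\in{}^*\N}A_n$ that $\check F$ is exactly the internal subalgebra of ${}^*\A$ generated by $F$, i.e. the object written $\left<F\right>^{(int)}$ in Theorem~\ref{ExtensionMonadEq}; the statement of that theorem is the main external input.

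First I would prove $\check F={}^*\A$. Write $\mathrm{Sg}^{\A}$ for the ``subalgebra generated by'' operator of $\A$, a map $\mathcal P(A)\to\mathcal P(A)$ living at bounded rank in the superstructure. The hypothesis that $\A$ is generated by $F$ is the single equation $\mathrm{Sg}^{\A}(F)=A$. Since $F$ is finite we have ${}^*F=F$, so transfer turns this into $\,{}^*(\mathrm{Sg}^{\A})(F)={}^*A\,$, and the left-hand side is by definition the least internal subalgebra of ${}^*\A$ containing $F$, i.e. $\left<F\right>^{(int)}=\check F$. Hence $\check F={}^*\A$ as algebras (both are subalgebras of ${}^*\A$ with underlying set ${}^*A$). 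I would stress here that transfer is doing real work: if $\A$ has an infinitary operation $o$, one cannot build $A$ inside $\check F$ ``by hand'', because ${}^*o$ has nonstandard arity and does not accept a standard $\omega$-indexed tuple of elements of $\check F$ as input; the purely syntactic transfer of $\mathrm{Sg}^{\A}(F)=A$ bypasses this issue.

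With $\check F={}^*\A$ in hand, the two inclusions are routine. For $\hat\A\leq\check F$: by the note following the definition of the extension monad, $\hat\A$ is a subalgebra of ${}^*\A=\check F$. For $\check F\leq\hat\A$: $F$ is a finite subset of $A$, so $\left<F\right>^{(int)}=\check F$ is one of the sets in the union $\bigcup\{\left<F'\right>^{(int)}\leq{}^*\A\mid F'\subseteq A\text{ finite}\}$, and by Theorem~\ref{ExtensionMonadEq} that union is contained in $\hat\A$. Combining the two gives $\hat\A=\check F$ (and, as a byproduct, $\hat\A={}^*\A$, so every finitely generated algebra is its own extension monad).

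The only genuinely load-bearing step is $\check F={}^*\A$, and the two things to be careful about there are (i) that the generation hypothesis is expressed as an honest first-order fact over the superstructure before transferring, and (ii) that the internal subalgebra generated by $F$, namely $\left<F\right>^{(int)}$, really is the $\check{}$-construction — the least internal subalgebra containing $F$ — so that the output of transfer matches the notation used in Theorem~\ref{ExtensionMonadEq} and in the statement of the lemma. Everything after that is bookkeeping.
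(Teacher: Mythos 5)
Your proof is correct, and it reaches the same destination as the paper (everything collapses to ${}^*\A$), but the decomposition is genuinely different. The paper's proof first invokes the fact that finite generation forces $\hat{\A} = {}^*\A$ (essentially the implication $a \implies b$ of Theorem \ref{FinitelyGeneratedEquiv}, which is proved only later in the paper), concludes that every internal algebra containing $\A$ as a subreduct must equal ${}^*\A$, and then finishes by asserting that $\check{F}$ is internal and that $\A \leq \check{F}$ ``because $F$ generates $\A$.'' You instead transfer the generation equation $\mathrm{Sg}^{\A}(F)=A$ directly (using ${}^*F=F$ for finite $F$) to get $\check{F}=\left<F\right>^{(int)}={}^*\A$, and then sandwich: $\hat{\A}\leq{}^*\A=\check{F}$ trivially, and $\check{F}=\left<F\right>^{(int)}\leq\hat{\A}$ from the second characterization in Theorem \ref{ExtensionMonadEq}. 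What your route buys: it only leans on results already established at this point in the paper, and it sidesteps the inclusion $\A\leq\check{F}$, which — as you rightly note — cannot be checked ``by hand'' when $\A$ has infinitary operations, since ${}^*o$ has nonstandard arity and does not accept standard $\omega$-indexed tuples; the paper's one-line justification of that inclusion glosses over exactly this point, whereas your syntactic transfer of the generation statement handles it cleanly. What the paper's route buys is brevity once Theorem \ref{FinitelyGeneratedEquiv} is available, and it makes explicit the slogan that for finitely generated algebras every internal extension is all of ${}^*\A$. Your byproduct $\hat{\A}={}^*\A$ is of course the same fact, so the two arguments share their transfer-theoretic core; the difference is in which auxiliary results carry the load and in the care taken with the infinitary case.
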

\begin{proof}
    Since $\A$ is finitely generated, we know that $\hat{\A} = {}^*\A$. Therefore, any internal algebra containing $\A$ equals ${}^*\A$. Since $F$ is internal because it is finite, $\check{F}$ will also be internal, and because $F$ generates $\A$, we have that $\A \leq \check{F}$, which is the desired result.
\end{proof}

\begin{theorem}\label{}
    For algebra $\A$ generated by $G$, we have $\hat{\A} = \check{G}$.
\end{theorem}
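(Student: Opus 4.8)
The plan is to bracket $\check G$ between $\hat\A$ and itself, using only the descriptions of the extension monad already available — Theorem~\ref{ExtensionMonadEq} and Lemma~\ref{FiniteCheckEq} — rather than recomputing anything from scratch. Throughout, $\langle F\rangle^{(int)}$ denotes the internal subalgebra of ${}^*\A$ generated by $F$; for finite (hence internal) $F$ this coincides with $\check F = {}^*(\langle F\rangle_\A)$ by transfer, and the construction $\check{(\cdot)}$ is visibly monotone in its generating set.

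First I would show $\check G \subseteq \hat\A$. Let $\B\leq{}^*\A$ be internal with $\A$ a subreduct of $\B$, so that $A\subseteq B$ and hence $G = A_0\subseteq B$; moreover $\B$, being an internal subalgebra of ${}^*\A$, is closed under every internal operation. By induction on the stages of the construction of $\check G$ — the successor step amounting to: if $A_n\subseteq B$ then, since $\B$ is closed under the internal operations, $A_{n+1}\subseteq B$ — each $A_n$ lies in $B$, so $\check G\subseteq B$. Intersecting over all such $\B$ gives $\check G\subseteq\hat\A$.

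For the reverse inclusion I would invoke the second equality of Theorem~\ref{ExtensionMonadEq}, $\hat\A = \big\langle\bigcup\{\langle F\rangle^{(int)}\mid F\subseteq A\text{ finite}\}\big\rangle$, so it is enough to check $\langle F\rangle^{(int)}\subseteq\check G$ for every finite $F\subseteq A$ (then $\check G$, a subalgebra of ${}^*\A$, contains the subalgebra these sets generate). Fix such an $F$. Since $G$ generates $\A$, the subalgebra $\langle G\rangle_\A = A$ is the directed union of its finitely generated subalgebras, so $F\subseteq\langle F'\rangle_\A$ for some finite $F'\subseteq G$. Now $\langle F'\rangle_\A$ is finitely generated, so by Lemma~\ref{FiniteCheckEq} (equivalently by transfer, $F'$ being finite) $\check{F'} = {}^*(\langle F'\rangle_\A)$ is an internal subalgebra containing $F$, whence $\langle F\rangle^{(int)}\subseteq {}^*(\langle F'\rangle_\A) = \check{F'}$, and $\check{F'}\subseteq\check G$ because $F'\subseteq G$. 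Hence $\langle F\rangle^{(int)}\subseteq\check G$, and therefore $\hat\A\subseteq\check G$; combined with the previous paragraph, $\hat\A=\check G$.

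The point I expect to require the most care is the bookkeeping around the $\check{(\cdot)}$ construction in the presence of infinitary operations: its stages are indexed by ${}^*\N$ in the definition, but for an external (infinite) generating set $G$ the recursion is not internal and the union of the first $\omega$ stages need not be closed under operations of nonstandard arity, which is exactly what the second inclusion uses. I would handle this either by restricting to languages with only finitary operations — in keeping with the scope of the abstract — or by replacing $\check G$ throughout with the equivalent presentation $\big\langle\bigcup\{\langle F\rangle^{(int)}\mid F\subseteq G\text{ finite}\}\big\rangle$, after which the theorem reduces to Theorem~\ref{ExtensionMonadEq} together with the single observation that every finite subset of $A=\langle G\rangle$ sits inside a subalgebra generated by a finite subset of $G$.
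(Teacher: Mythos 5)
Your first inclusion is exactly the paper's argument: any internal $\B\leq{}^*\A$ having $\A$ as a subreduct contains $G$ and is closed under the internal operations, hence contains every stage of $\check{G}$, and intersecting gives $\check{G}\subseteq\hat{\A}$. For the reverse inclusion you take a genuinely different route. The paper argues by contradiction with a second saturation argument: for $x\in\hat{\A}\setminus\check{G}$ it observes $x\in\langle H\rangle^{(int)}$ for every hyperfinite $H\supseteq G$, applies concurrency/overspill to the relation $H_1\,r\,H_2\iff[x\in\langle H_2\rangle^{(int)}\hbox{ and }H_2\subseteq H_1]$ to obtain an internal $H'$ contained in every hyperfinite superset of $G$, concludes $H'\subseteq G$ is finite, and then invokes Lemma~\ref{FiniteCheckEq} to get $x\in\check{H'}\leq\check{G}$, a contradiction. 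You instead reuse the concurrency work already encapsulated in the second equality of Theorem~\ref{ExtensionMonadEq}, reducing the inclusion to three observations: each finite $F\subseteq A$ lies in $\langle F'\rangle_\A$ for some finite $F'\subseteq G$; the finite case $\langle F'\rangle^{(int)}={}^*\langle F'\rangle_\A=\check{F'}$ (Lemma~\ref{FiniteCheckEq}/transfer); and monotonicity of $\check{(\cdot)}$ in the generating set. Both proofs rest on the same two pillars (one concurrency argument plus the finite case), but yours runs saturation only once and is correspondingly shorter, at the cost of the directed-union step, which is valid only for finitary operations; the paper's version re-runs saturation but is no more general, since its last step likewise drops to a finite subset of $G$ and Lemma~\ref{FiniteCheckEq}, and it silently uses the same monotonicity you make explicit. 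Your closing caveat is the right thing to worry about and afflicts the paper equally: for external $G$ the recursion defining $\check{G}$ along ${}^*\N$ is not internal, so either restricting to finitary operations (consistent with the abstract) or re-presenting $\check{G}$ as $\langle\bigcup\{\langle F\rangle^{(int)}\mid F\subseteq G\hbox{ finite}\}\rangle$ is needed to make the statement precise; with either reading your argument is complete.
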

\begin{proof}
    Assume algebra $\A$ is generated by set $G$. Then, ${}^*\A$ is ${}^*$-generated by ${}^*G$, which means that the operations are iterated hyperfinitely many times as opposed to finitely many times. Take some internal subalgebra of ${}^*\A$ that contains $A$. Clearly, it will also contain $G$, and because it is an internal algebra, it will be closed under hyperfinitely many iterations of its operations. Thus, $\hat{\A} \supseteq \check{G} $.
    
    Now let $x \in \hat{\A} \setminus \check{G}$. Then for every hyperfinite set $H$ containing $G$, we have $x \in \left < H \right >^{(int)}$. Applying overspill to relation $r$ of the hyperfinite subsets of ${}^*\A$, where $H_1 r H_2$ if and only if $x \in \left < H_2 \right >^{(int)}$ and $H_2 \subseteq H_1$, which is concurrent on the subsets containing $G$, we see that there exists an internal hyperfinite subset of ${}^*\A$, $H'$, which is contained in every hyperfinite subset of ${}^*\A$ that contains $G$. Since $G$ is the intersection of all these sets, we have that $H' \subseteq G$, and since $G$ is external, we know $H'$ is finite and $x \in \left <H'\right >^{(int)}$. So by $\ref{FiniteCheckEq}$ we have that $x \in \check{H'} \leq \check{G}$.
\end{proof}

Now that we have covered some concrete examples, we move to broader characterizations, starting with results more closely mirroring the work of \cite{Insall} and then moving on to more original results.

The below results are also adapted from \cite{Insall}, allowing for infinitely many operations.

\begin{theorem}\label{LocalFiniteEquiv}
        An algebra is locally finite if and only if $\hat{\A}|_\mathcal{O} = \A$.
\end{theorem}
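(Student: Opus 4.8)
The plan is to prove both implications through the characterisation $\hat{\A} = \bigcup\{\left<F\right>^{(int)} \leq {}^*\A \mid F \subseteq A \text{ finite}\}$ established above, combined with the transfer identity that, for each finite $F \subseteq A$, the internal subalgebra $\left<F\right>^{(int)}$ has underlying set ${}^*\!\left(\langle F\rangle_{\A}\right)$, where $\langle F\rangle_{\A}$ is the ordinary subalgebra of $\A$ generated by $F$. This identity would be obtained by transferring the recursive construction of a generated subalgebra: the standard recursion $B_0 = F$, $B_{n+1} = B_n \cup \{o(\vec{x}) \mid o \in \mathcal{O},\ \vec{x} \in B_n^{\mathrm{ar}(o)}\}$, $\langle F\rangle_{\A} = \bigcup_{n \in \N} B_n$, transfers to exactly the defining recursion of $\check{F} = \left<F\right>^{(int)}$ with ${}^*\mathcal{O}$ in place of $\mathcal{O}$; since $F$ is finite, ${}^*F = F$, and the resulting internal union is ${}^*\!\left(\langle F\rangle_{\A}\right)$. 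Equivalently, one may invoke Lemma~\ref{FiniteCheckEq} together with transfer of the statement ``every subalgebra of $\A$ containing $F$ contains $\langle F\rangle_{\A}$''.

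For the forward direction, I would assume $\A$ is locally finite, so that for every finite $F \subseteq A$ the set $\langle F\rangle_{\A}$ is finite, whence ${}^*\!\left(\langle F\rangle_{\A}\right) = \langle F\rangle_{\A}$ as a set and $\left<F\right>^{(int)}$ has underlying set $\langle F\rangle_{\A} \subseteq A$. Taking the union over all finite $F \subseteq A$ shows the underlying set of $\hat{\A}$ is $\bigcup\{\langle F\rangle_{\A} \mid F \subseteq A \text{ finite}\} = A$ (each $a \in A$ lies in $\langle\{a\}\rangle_{\A}$). Since $\hat{\A} \leq {}^*\A$, the operations of $\hat{\A}|_{\mathcal{O}}$ are the ${}^*f$ for $f \in \mathcal{O}$ restricted to $A$; for each finitary $f$, transfer gives that ${}^*f$ restricted to $A^{\mathrm{ar}(f)}$ is $f_{\A}$, so $\hat{\A}|_{\mathcal{O}} = (A;\,(f_{\A})_{f \in \mathcal{O}}) = \A$.

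For the converse I would argue by contraposition: assume $\A$ is not locally finite, so there is a finite $F \subseteq A$ with $\langle F\rangle_{\A}$ infinite. For an infinite standard set the inequality relation on it is concurrent, so by the concurrency principle ${}^*\!\left(\langle F\rangle_{\A}\right)$ contains an element $\xi$ with $\xi \neq a$ for every $a \in \langle F\rangle_{\A}$; in particular $\xi \notin \langle F\rangle_{\A}$. Moreover, for a standard subset $S \subseteq A$ one has ${}^*S \cap A = S$ (by transfer of $x \in S \iff {}^*x \in {}^*S$ for $x \in A$, using ${}^*x = x$ on urelements); applied to $S = \langle F\rangle_{\A}$ this forces $\xi \notin A$. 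But $\xi \in \left<F\right>^{(int)} \subseteq \hat{\A}$, so the underlying set of $\hat{\A}$ strictly contains $A$, and therefore $\hat{\A}|_{\mathcal{O}} \neq \A$. This yields the equivalence.

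The step I expect to be the main obstacle is making the identity $\left<F\right>^{(int)} = {}^*\!\left(\langle F\rangle_{\A}\right)$ (equivalently, controlling internal generation by a finite set) fully rigorous when the signature has infinitely many operations, or operations of infinite arity---this is exactly the phenomenon that forced the earlier ``patch'' to the formula for $\hat{\A}$, and one must check that the recursive-generation transfer above does not secretly introduce nonstandard-indexed or nonstandard-arity operations that enlarge $\left<F\right>^{(int)}$ beyond $\langle F\rangle_{\A}$ in the locally finite case. A secondary point requiring care is the meaning of ``$\hat{\A}|_{\mathcal{O}} = \A$'': for finitary signatures it is literal equality of algebras as above, whereas in the presence of genuinely infinitary operations the natural reading is equality of underlying sets (since ${}^*f$ then has nonstandard arity), and the converse direction only needs that reading.
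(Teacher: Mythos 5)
Your proposal is correct, and it uses the same nonstandard ingredients as the paper, but it is organized around a different key lemma. You isolate, up front, the transfer identity $\left<F\right>^{(int)} = {}^*\!\left(\langle F\rangle_{\A}\right)$ for finite $F\subseteq A$ (justified, as you rightly prefer when there are infinitely many or infinitary operations, by transferring closure-plus-minimality rather than the $\omega$-step recursion); the paper only establishes that identity later, inside the proof of Theorem~\ref{FinitelyGeneratedEquiv}, and does not invoke it here. Instead, the paper's forward direction argues that for locally finite $\A$ the finite set $\left<F\right>$ is itself internal and closed under ${}^*\mathcal{O}$, so $\left<F\right>^{(int)}|_\mathcal{O}=\left<F\right>$, deduces $\hat{\A}|_\mathcal{O}\geq\A$ from the generated-union formula of Theorem~\ref{ExtensionMonadEq}, and then proves $\hat{\A}|_\mathcal{O}\leq\A$ separately by writing a putative $\nu\in\hat{A}\setminus A$ as an internal operation applied to operands lying in a hyperfinite (hence finite) union of finite subsets of $A$; your route through the union formula $\hat{\A}=\bigcup\{\left<F\right>^{(int)}\mid F\subseteq A\ \text{finite}\}$ reaches the same conclusion more directly. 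For the converse, the paper argues in the forward sense (if $\hat{\A}|_\mathcal{O}=\A$ then ${}^*\!\left<F\right>$ has only standard elements, so by overspill $\left<F\right>$ is finite), while you take the contrapositive using concurrency and the observation ${}^*S\cap A=S$; these are mirror images of the same fact that the star of an infinite standard set contains nonstandard elements. What each approach buys: the paper's version keeps this theorem independent of the unqualified identity ${}^*\!\left<F\right>=\left<F\right>^{(int)}$, whereas yours proves that identity once and reuses it in both directions, giving a shorter argument; and your explicit caveat about how to read $\hat{\A}|_\mathcal{O}=\A$ in the presence of genuinely infinitary operations is a point the paper glosses over.
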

\begin{proof}
    This proof is adapted from \cite{Insall}. However, we will subreducts to account for the possibility of infinitely many operations.

    $\mathbf{\Longrightarrow}$: If $\A$ is locally finite and $F \subseteq A$ is finite, then $\left<F\right>^{(int)}$ is an expansion of $\left<F\right>$. Therefore, $\left<F\right> \leq \left<F\right>^{(int)}|_\mathcal{O} = \bigcap\{\E \leq {}^*\A| \E$ is internal, and $F \subseteq E\}|_\mathcal{O} \leq \left<F\right>$ because 
    \[
    \bigcap\{\E \leq {}^*\A| \E\hbox{ is internal and }F \subseteq E\} \subseteq \left<F\right>
    \]
    for finite $F$. It follows that $\left<F\right> = \left<F\right>^{(int)}|_\mathcal{O}$, and so $\hat{\A} = \left<\bigcup\{\left<F\right>^{(int)} \leq {}^*\A|F \subseteq A\text{ is finite}\}\right>^{(int)}$ $= \left<\bigcup \{\left<F\right> \leq {}^*\A|_\mathcal{O}|F \subseteq A\text{ is finite}\}\right>^{(int)}$ since the underlying sets are the same. Thus,
    \begin{eqnarray}
    \hat{\A}|_\mathcal{O} &=& \left<\bigcup\{\left<F\right>^{(int)} \leq {}^*\A|F \subseteq A \text{ is finite} \}\right>^{(int)}|_\mathcal{O} \nonumber\\
    &=& \left<\bigcup\{\left<F\right>^{(int)}|_\mathcal{O} \leq {}^*\A|_\mathcal{O}|F \subseteq A \text{ is finite}\}\right>^{(int)}|_\mathcal{O} \nonumber\\
     &=& \left<\bigcup\{\left<F\right> \leq {}^*\A|_\mathcal{O}|F \subseteq A \text{ is finite}\}\right>^{(int)}|_\mathcal{O} \nonumber\\
    &\geq& \left<\bigcup\{\left<F\right> \leq \A|F \subseteq A \text{ is finite}\} \right>\nonumber\\
    &=& \A.\nonumber
    \end{eqnarray}
    
    Now we can prove that $\hat{\A}|_\mathcal{O} \leq \A$. Assume there exists $\nu$ such that $\nu \in \hat{A}$ and $\nu \notin A$. Let $\B$ be an internal algebra such that $\A$ is a subreduct of $\B$. Then $\nu \in B$. Note that there exists operation $o_\omega\in{}^*\mathcal{O}$ and a set of operands $F_\omega = \{a_0,a_1,...,a_{\omega-1}\} \subseteq \bigcup \left<F\right>^{(int)} = A$ because $\A$ is locally finite such that $o_\omega(a_0,a_1,...,a_{\omega-1}) = \nu \notin A$. If no such $\nu$ exists, then every operation with every possible set of operands must return an element in $A$, and thus, by Theorem 3, we can conclude that $\hat{\A}|_\mathcal{O} = \A$.
    
    If we take $F_0,F_1,...,F_{\omega-1} \subseteq A$ to be finite sets such that $a_0 \in F_0, ..., a_{\omega-1} \in F_{\omega-1}$, then since $F_\omega$ is hyperfinite, we also know $E = F_0 \cup F_1 \cup ... \cup F_{\omega-1} \subseteq A$ is hyperfinite. This tells us that $E$ is finite, and thus, $\nu \in \left<E\right>^{(int)}|_\mathcal{O} = \left<E\right> \leq \A$. Moreover, we have that $\hat{\A}|_\mathcal{O} \leq \A$. Hence, $\hat{\A}|_\mathcal{O} = \A$.

    $\mathbf{\Longleftarrow}$: If $\hat{\A}|_\mathcal{O} = \A$ and $F \subseteq A$ is finite, then $\left<F\right>^{(int)} \leq \hat{\A}$ by the definition of $\hat{\A}$. Now, ${}^*\left<F\right>|_\mathcal{O} \leq  \left<F\right>^{(int)}|_\mathcal{O} \leq \hat{\A}|_\mathcal{O} = \A$ because any element of $\left<F\right>$ is also in $\left<F\right>^{(int)}$. This implies ${}^*\left<F\right>$ has only standard elements, which implies that, due to overspill, $\left<F\right>$ has only a finite number of elements. Therefore, $\A$ is locally finite.
\end{proof}


\begin{lemma}\label{}
    Let $\A$ and $\B$ be algebras and $\P$ be their direct product, then $range({}^*(\rho_\A|_{\hat\P})) = \hat\A$.
\end{lemma}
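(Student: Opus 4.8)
The plan is to reduce everything to the finite-generation description of extension monads established above, namely $\hat{\P} = \bigcup\{\left<F\right>^{(int)} \leq {}^*\P \mid F \subseteq P \text{ finite}\}$ and $\hat{\A} = \bigcup\{\left<G\right>^{(int)} \leq {}^*\A \mid G \subseteq A \text{ finite}\}$, together with the behaviour of the internal homomorphism ${}^*\rho_\A\colon{}^*\P\to{}^*\A$ on internally generated subalgebras. First I would record the two transfer facts I need. The projection $\rho_\A\colon\P\to\A$ is a surjective homomorphism (surjectivity uses only that $\B$, hence $P$, is nonempty), so by transfer ${}^*\rho_\A$ is an internal surjective homomorphism; and since in the standard setting any homomorphism $h$ satisfies $h[\left<X\right>]=\left<h[X]\right>$, transfer yields ${}^*\rho_\A[\left<F\right>^{(int)}]=\left<{}^*\rho_\A[F]\right>^{(int)}$ for every internal $F\subseteq{}^*P$; in particular for a finite standard $F\subseteq P$ this reads ${}^*\rho_\A[\left<F\right>^{(int)}]=\left<\rho_\A[F]\right>^{(int)}$, because ${}^*\rho_\A$ agrees with $\rho_\A$ on standard elements and $F$ is finite. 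Throughout I read ${}^*(\rho_\A|_{\hat\P})$ as the restriction of the internal function ${}^*\rho_\A$ to $\hat\P\subseteq{}^*\P$.

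For the inclusion $range({}^*\rho_\A|_{\hat\P})\subseteq\hat\A$: given $x\in\hat\P$, pick a finite $F\subseteq P$ with $x\in\left<F\right>^{(int)}$. Then ${}^*\rho_\A(x)\in{}^*\rho_\A[\left<F\right>^{(int)}]=\left<\rho_\A[F]\right>^{(int)}$, and $\rho_\A[F]$ is a finite subset of $A$, so ${}^*\rho_\A(x)\in\hat\A$ by the finite-generation description of $\hat\A$.

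For the reverse inclusion $\hat\A\subseteq range({}^*\rho_\A|_{\hat\P})$: given $y\in\hat\A$, pick a finite $G\subseteq A$ with $y\in\left<G\right>^{(int)}$. Using surjectivity of $\rho_\A$, choose for each $g\in G$ a preimage $p_g\in P$ and set $F=\{p_g\mid g\in G\}$, a finite subset of $P$ with $\rho_\A[F]=G$. Then $\left<F\right>^{(int)}\subseteq\hat\P$ and ${}^*\rho_\A[\left<F\right>^{(int)}]=\left<G\right>^{(int)}\ni y$, so some $x\in\left<F\right>^{(int)}\subseteq\hat\P$ has ${}^*\rho_\A(x)=y$. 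Combining the two inclusions gives the claimed equality.

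I expect the only genuine subtlety to be the justification of the identity ${}^*\rho_\A[\left<F\right>^{(int)}]=\left<{}^*\rho_\A[F]\right>^{(int)}$: one must phrase ``the homomorphic image of the subalgebra generated by $X$ equals the subalgebra generated by the image of $X$'' as a statement about the superstructure (quantifying over the hyperfinite construction stages $\A_n$ used to define $\left<\cdot\right>^{(int)}=\check{\cdot}$) so that transfer applies even with infinitely many, possibly infinitary, operations, and to verify that ${}^*\rho_\A$ restricted to the standard finite set $F$ is literally $\rho_\A|_F$ (immediate since $F$ is a finite set of urelements). The mild hypothesis that $\B$, and hence $P$, is nonempty, which is what makes $\rho_\A$ onto, should also be noted; if $\B=\emptyset$ the statement is vacuous.
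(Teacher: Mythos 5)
Your proof is correct, but it takes a genuinely different route from the paper's. The paper argues straight from the definition of the extension monad as an intersection of internal algebras: for $range({}^*\rho_\A|_{\hat\P})\subseteq\hat\A$ it pulls back an arbitrary internal $\I$ with $\A\leq\I\leq{}^*\A$ to the internal algebra $({}^*\rho_\A)^{-1}[\I]$, which contains $\P$ and hence $\hat\P$, so the image of $\hat\P$ lies in every such $\I$; for the reverse inclusion it pushes forward internal algebras $\S\supseteq\P$ to internal algebras ${}^*\rho_\A[\S]\supseteq\A$ and appeals to intersection. You instead reduce both inclusions to the finite-generation description $\hat\P=\bigcup\{\left<F\right>^{(int)}\mid F\subseteq P\text{ finite}\}$ (the paper's Proposition on $\hat\A=\bigcup\{\left<F\right>^{(int)}\mid F\subseteq A\text{ finite}\}$) together with the transferred identity ${}^*\rho_\A[\left<F\right>^{(int)}]=\left<\rho_\A[F]\right>^{(int)}$, producing an explicit preimage in some $\left<F\right>^{(int)}\subseteq\hat\P$ for each $y\in\hat\A$. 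What your route buys is a sharper argument for the inclusion $\hat\A\subseteq range({}^*\rho_\A|_{\hat\P})$: the paper's version only establishes $\hat\A\subseteq\bigcap_\S{}^*\rho_\A[\S]$, and silently identifies the intersection of images with the image of the intersection, whereas you exhibit a witness inside $\hat\P$ directly. What it costs is a heavier dependence on earlier machinery: the union characterization (which is the delicate point of the paper when infinitely many or infinitary operations are allowed) and the transferred generation identity, whose formulation in the superstructure you rightly flag as the real work. Your observation that surjectivity of $\rho_\A$ needs $B\neq\emptyset$ is a legitimate caveat that the paper glosses over; the paper's pullback argument for the other inclusion does not need surjectivity at all.
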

\begin{proof}
    Assume $x \in range({}^*(\rho_\A|_{\hat\P})) \setminus \hat\A$, so there must exist some internal $\I$ such that $\A \leq \I$ and $x \notin \I$. There must also exist $y \in \hat\P$ such that ${}^*(\rho_\A)(y) = x$. By the properties of homomorphisms, if $\S$ is any internal subalgebra of ${}^*\P$ containing $\P$, we have that ${}^*(\rho_\A)[\S]$ is an internal subalgebra of ${}^*\A$ that contains $\A$, Thus, by the closure of algebras under intersection, we see that $range({}^*(\rho_\A|_{\hat\P})) \supseteq \hat\A$. Taking $({}^*\rho_\A|_{\hat\P})^{-1}[\I]$, we get an internal algebra of ${}^*\P$ that contains $\P$ and thus $y \in ({}^*\rho_\A|_{\hat\P})^{-1}[\I]$ and so $x \in \I$. Thus giving us our contradiction and so $range({}^*(\rho_\A|_{\hat\P})) \subseteq \hat\A$.
\end{proof}

For the proof of the next theorem, it is important to note that when we are discussing ${}^*$products, we are not speaking of the internal direct product most commonly used in group theory.

\begin{theorem}\label{UrProdMonad}
    For a finite collection of algebras, $\{\A_1, \A_2, ..., \A_n\}$, in the same variety $V$, the extension monad of the direct product of those algebras will be the direct product of the extension monads of those algebras.
\end{theorem}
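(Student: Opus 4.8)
The plan is to reduce to the binary case and then establish $\widehat{\A_1\times\A_2}=\hat{\A_1}\times\hat{\A_2}$ by a double inclusion, using the preceding lemma for one direction and the finite‑generation characterisations of Theorem~\ref{ExtensionMonadEq} for the other. First I would observe that a product of members of $V$ again lies in $V$, and — invoking the urelement‑representation theorem proved above — fix a model of $\P=\prod_{i=1}^n\A_i$ whose underlying set is a set of urelements, so that $\hat\P$ is meaningful. Since $n$ is finite and standard, transfer gives ${}^*\P=\prod_{i=1}^n{}^*\A_i$ as the $*$‑product (the coordinatewise internal product, not the internal direct product of group theory), with projections ${}^*\rho_i$, and $\prod_{i=1}^n\hat{\A_i}$ is then a subalgebra of ${}^*\P$. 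An induction on $n$, using associativity of finite products up to isomorphism together with the fact that $\hat{\,\cdot\,}$ is preserved by isomorphism, reduces the problem to $n=2$; write $\P=\A_1\times\A_2$ with projections $\rho_1,\rho_2$.

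For the inclusion $\hat\P\le\hat{\A_1}\times\hat{\A_2}$ I would apply the preceding lemma coordinatewise: its ``$\subseteq$'' content says $\rho_i[\hat\P]\subseteq\hat{\A_i}$, so every $x\in\hat\P$ has ${}^*\rho_1(x)\in\hat{\A_1}$ and ${}^*\rho_2(x)\in\hat{\A_2}$, whence $x\in\hat{\A_1}\times\hat{\A_2}$; as the right‑hand side is a subalgebra of ${}^*\P$ this yields $\hat\P\le\hat{\A_1}\times\hat{\A_2}$. For the reverse inclusion $\hat{\A_1}\times\hat{\A_2}\le\hat\P$ the strategy is: take an arbitrary $(x_1,x_2)$ with $x_i\in\hat{\A_i}$; by the second equality of Theorem~\ref{ExtensionMonadEq} choose finite $F_1\subseteq A_1$, $F_2\subseteq A_2$ with $x_i\in\langle F_i\rangle^{(int)}$ in ${}^*\A_i$; then produce a single finite $F\subseteq P$ with $(x_1,x_2)\in\langle F\rangle^{(int)}$ in ${}^*\P$, so that the Proposition identifying $\hat\P$ with $\bigcup\{\langle F\rangle^{(int)}\mid F\subseteq P\text{ finite}\}$ closes the argument. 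The natural candidate for $F$ is the ``pairing'' set $\{(f_1,f_2)\mid f_1\in F_1,\ f_2\in F_2\}$, possibly enlarged by the standard points witnessing nullary operations, the idea being to run the two internal derivations exhibiting $x_1$ and $x_2$ simultaneously over $F$.

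The synchronisation of those two derivations is the main obstacle I expect, and it is genuine: in a direct product the subalgebra generated by a set is generally not the product of its projections, so a single internal term over $F$ need not realise $x_1$ in the first coordinate and $x_2$ in the second at once. My plan to handle it is to (i) replace $F_1,F_2$ by uniform finite ``building blocks'' and re‑express $x_1$ and $x_2$ as internal compositions of one common hyperfinite schedule over those blocks — using transfer of the standard fact that every element of $\langle F_i\rangle$ is a term over $F_i$, together with an overspill/internality argument to line up the two schedules — and then (ii) pair the aligned derivations coordinatewise to land on $(x_1,x_2)$. An alternative I would keep in reserve is to show directly that $\hat{\A_1}\times\hat{\A_2}$ lies in every internal $\B$ with $\A_1\times\A_2$ a subreduct of $\B$: projecting gives $\hat{\A_i}\subseteq{}^*\rho_i[\B]$, and what remains is to lift coordinatewise membership in the images to membership in $\B$, which is exactly where the finiteness of the collection and the closure properties of $V$ must be brought to bear. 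Step (i) — getting the two coordinate derivations onto a common schedule — is the point I expect to be delicate.
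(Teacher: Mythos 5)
Your reduction to the binary case and your first inclusion are unproblematic: using the preceding lemma to see that ${}^*\rho_1(x)\in\hat{\A_1}$ and ${}^*\rho_2(x)\in\hat{\A_2}$ for every $x\in\hat\P$ is essentially how the paper also obtains the surjections $\hat\P\twoheadrightarrow\hat{\A_i}$. But from there the two arguments part ways: the paper never attempts the set-theoretic reverse inclusion you propose. It instead argues categorically, using a concurrency argument to embed $\E=\Pi\{\hat\A,\hat\B\}$ into every internal $\U$ containing $\hat\P$ and then verifying the universal property of the product by composing mediating morphisms through $\E$. Your plan — realise an arbitrary $(x_1,x_2)\in\hat{\A_1}\times\hat{\A_2}$ inside a single $\langle F\rangle^{(int)}$ with $F\subseteq P$ finite — is a strictly stronger, coordinatewise claim, and it is exactly this claim that breaks.

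The synchronisation obstacle you flag is not merely delicate; it is fatal. Take $\A_1=\A_2=(\N_{>0},+)$ in the variety of commutative semigroups. Both are generated by $\{1\}$, so by Theorem \ref{FinitelyGeneratedEquiv} $\hat{\A_i}={}^*\A_i$, and hence $(1,\eta)\in\hat{\A_1}\times\hat{\A_2}$ for any infinite $\eta$. However, for finite $F\subseteq A_1\times A_2$, transfer shows every element of $\langle F\rangle^{(int)}$ is a sum of $\kappa\in{}^*\N_{>0}$ elements of $F$, and such a sum has first coordinate at least $\kappa$; so an element with first coordinate $1$ is a single standard pair, and no finite $F$ (and no rescheduling of internal derivations) can produce $(1,\eta)$. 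Worse, the reverse inclusion itself fails: for infinite $\nu<\eta$ the set $\{(a,b)\in{}^*\N_{>0}\times{}^*\N_{>0}\mid b\le\nu a\}$ is an internal subalgebra of ${}^*\P$ containing $A_1\times A_2$ and omitting $(1,\eta)$, so $(1,\eta)\notin\hat\P$. Your reserve strategy fares no better, since both projections of this internal algebra are all of ${}^*\N_{>0}$, so coordinatewise membership in the projected images cannot be lifted to membership in the algebra. Note, finally, that the same element shows the restricted projections ${}^*\rho_i|_{\hat\P}$ admit no mediating morphism for $\G$ the free semigroup on one generator with $\psi_1(x)=1$ and $\psi_2(x)=\eta$, so this example puts pressure not only on your route but on any proof of the statement (including the universal-property argument given in the paper), and should be reconciled before either argument is accepted.
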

\begin{proof}
    We first consider the case with two algebras because induction easily shows us the rest. Let $V$ be a variety, $\A, \B \in V$, and $\rho_\A, \rho_\B$ be the projection homomorphisms from the product $\P = \Pi\{\A,\B\}$ to $\A$ and $\B$, respectively. By transfer, we know that ${}^*\P$ has projections ${}^*(\rho_\A) = \rho_{{}^*\A}$ and ${}^*(\rho_\B) = \rho_{{}^*\B}$ which map onto ${}^*\A$ and ${}^*\B$, respectively. 
    
    Now let the product of the extension monads be denoted by $\E$ with projections $\rho_{\hat\A}$ and $\rho_{\hat\B}$ mapping onto $\hat\A$ and $\hat\B$, also respectively. We will show that $\hat\P$ is also a direct product of $\hat\A$ and $\hat\B$. We know $\E$ is the direct product of $\bigcap\{\S|\A \leq \S \leq {}^*\A, \S \text{ internal}\}$ and $\bigcap\{\T|\B \leq \T \leq {}^*\B, \T \text{ internal}\}$. We also know $\hat\P = \bigcap \{\U|\P \leq \U \leq {}^*\P,\U \text{ internal}\}$ and $\bigcap \{ \V| \V \text{ is a } {}^*\text{product of some internal } \Y,\Z \text{ for } \A \leq \Z \leq {}^*\A, \B \leq \Y \leq {}^*\B\}$, which contains an isomorphic copy of $\P$ and, thus, also contains an isomorphic copy of $\hat\P$. 
    
    Given any internal $\U$ containing $\hat\P$, let $\D$ be an internal finitely generated subalgebra of $\Pi\{{}^*\A,{}^*\B\}$ whose finite generating set $F$ is contained in $\Pi\{\A,\B\}$. Let $\K \leq {}^*\A$ and $\M \leq {}^*\B$ be $\rho_{{}^*\A}[\D]$ and $\rho_{{}^*\B}[\D]$, respectively, and be generated internally by finite subsets $\rho_\A[F]$ and $\rho_\B[F]$, respectively. We have that $\D$ is internally embeddable into the internal algebra generated by $\Pi\{\rho_\A[F],\rho_\B[F]\}$, which we will call $\C$. Since we have $F \subseteq \Pi\{\rho_\A[F],\rho_\B[F]\}$, we now look at $\C$, which is embeddable into $\U$. Since $\U$ contains isomorphic copies of every possible internal and finitely generated $\D$ in $\Pi\{{}^*\A,{}^*\B\}$, we see that\textbf{---}by concurrency on relation $r$ between finitely generated subalgebras of $\Pi\{{}^*\A,{}^*\B\}$ whose generating sets are subsets of $\Pi\{\A,\B\}$, where $\F r \G$ if and only if $F \subseteq G$ and $\G$ is internally embeddable into $\U$\textbf{---}there exists an internal hyperfinitely generated subalgebra of $\Pi\{{}^*\A,{}^*\B\}$ that is embeddable into $\U$ and contains $\bigcup\{\D|\D = \langle F\rangle^{(int)}, F \in \mathcal{F}(P), \D \leq \Pi\{{}^*\A,{}^*\B\}\} = \E$. Therefore, $\E$ is internally embeddable into $\U$. Since this holds for every $\U$, we see that $\E$ is embeddable into $\bigcap\{\U|\U \leq {}^*\P, \U \text{ internal}, \hat\P \leq \U\} = \hat\P$.
    
    Now that we have shown that $\E$ is embeddable into $\hat\P$, we have the following commutative diagram:

\[\begin{tikzcd}
	&& \E \\
	\\
	{\hat\A} && {\hat\P} && {\hat\B}
	\arrow["{\rho_{\hat\A}}", two heads, from=1-3, to=3-1]
	\arrow["\theta"{description}, dashed, tail, from=1-3, to=3-3]
	\arrow["{\rho_{\hat\B}}"', two heads, from=1-3, to=3-5]
	\arrow["{\rho_{{}^*\A}|_{\hat\P}}"', two heads, from=3-3, to=3-1]
	\arrow["{\rho_{{}^*\B}|_{\hat\P}}", two heads, from=3-3, to=3-5]
\end{tikzcd}\]
    where $\theta$ is an embedding.

    Therefore, we simply show that we can always complete the following diagram so that it commutes for an arbitrary algebra $\G$ in the same variety:

\[\begin{tikzcd}
	&& \G \\
	\\
	{\hat\A} && {\hat\P} && {\hat\B}
	\arrow["{\psi_{\hat\A}}", from=1-3, to=3-1]
	\arrow["\gamma"{description}, dashed, from=1-3, to=3-3]
	\arrow["{\psi_{\hat\B}}"', from=1-3, to=3-5]
	\arrow["{\rho_{{}^*\A}|_{\hat\P}}"', two heads, from=3-3, to=3-1]
	\arrow["{\rho_{{}^*\B}|_{\hat\P}}", two heads, from=3-3, to=3-5]
\end{tikzcd}\]
    which we see from the following diagram because $\E = \Pi\{\hat\A,\hat\B\}$:

\[\begin{tikzcd}
	&& \G \\
	\\
	{\hat\A} && \E && {\hat\B} \\
	\\
	&& {\hat\P}
	\arrow["{\psi_{\hat\A}}", from=1-3, to=3-1]
	\arrow["\epsilon"{description}, dashed, from=1-3, to=3-3]
	\arrow["{\psi_{\hat\B}}"', from=1-3, to=3-5]
	\arrow["{\rho_{\hat\A}}"', two heads, from=3-3, to=3-1]
	\arrow["{\rho_{\hat\B}}", two heads, from=3-3, to=3-5]
	\arrow["\theta"{description}, tail, from=3-3, to=5-3]
	\arrow["{\rho_{{}^*\A}|_{\hat\P}}"', two heads, from=5-3, to=3-1]
	\arrow["{\rho_{{}^*\B}|_{\hat\P}}", two heads, from=5-3, to=3-5]
\end{tikzcd}\]
    Since we can always find $\gamma = \theta\circ\epsilon$, we see that $\hat\P$ is also a direct product of $\hat\A$ and $\hat\B$.
\end{proof}

\begin{theorem}\label{FinitelyGeneratedEquiv}
    Let $\A$ be an algebra, then the following are equivalent:
    \begin{itemize}
        \item[a)] $\A$ is finitely generated
        \item[b)] $\hat{\A} = {}^*\A$
        \item[c)] $\hat{\A}$ is internal.
    \end{itemize}
\end{theorem}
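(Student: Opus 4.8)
The plan is to prove the cycle (a)$\,\Rightarrow\,$(b)$\,\Rightarrow\,$(c)$\,\Rightarrow\,$(a). The two forward steps are short. For (a)$\,\Rightarrow\,$(b): if $\A$ is generated by a finite set $F$, then Lemma~\ref{FiniteCheckEq} gives $\hat\A=\check F$, and since $F$ generates $\A$, transferring the standard fact ``$\langle F\rangle=\A$'' yields $\check F=\langle F\rangle^{(int)}={}^*\A$, so $\hat\A={}^*\A$. For (b)$\,\Rightarrow\,$(c): ${}^*\A$ is the $*$-transform of a standard algebra, hence internal, so $\hat\A={}^*\A$ is internal.

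The substance is (c)$\,\Rightarrow\,$(a), which I would prove contrapositively: assuming $\A$ is not finitely generated, derive a contradiction from ``$\hat\A$ internal''. First reduce to a countable situation. Since $\A$ is not finitely generated, pick $c_0\in A$ and inductively $c_{n+1}\in A\setminus\langle c_0,\dots,c_n\rangle$ (possible precisely because $\langle c_0,\dots,c_n\rangle\neq\A$ for every $n$); set $C_n=\{c_0,\dots,c_n\}$, $C=\bigcup_n C_n$, and $\S=\langle C\rangle\le\A$. Each $C_n$ is finite, so $\langle C_n\rangle^{(int)}={}^*\langle C_n\rangle$, and by the Proposition establishing that $\hat{\cdot}$ equals the union of the $\langle F\rangle^{(int)}$ over finite generating fragments (applied to $\S$), $\hat\S=\bigcup_{n\in\N}{}^*\langle C_n\rangle$. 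Since $c_{n+1}$ is a standard element of $\langle C_{n+1}\rangle\setminus\langle C_n\rangle$ and the standard part of ${}^*\langle C_n\rangle$ is exactly $\langle C_n\rangle$, we get $c_{n+1}\in{}^*\langle C_{n+1}\rangle\setminus{}^*\langle C_n\rangle$; hence $\hat\S$ is a \emph{strictly} increasing $\omega$-union ${}^*\langle C_0\rangle\subsetneq{}^*\langle C_1\rangle\subsetneq\cdots$ of internal sets, each a proper subset of $\hat\S$.

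Now suppose toward a contradiction that $\hat\S$ is internal. Let $g\colon\N\to\mathcal P_{fin}(A)$ be the standard function $n\mapsto C_n$, with internal extension ${}^*g$ on ${}^*\N$. The predicate ``every $x\in\hat\S$ lies in $\langle{}^*g(k)\rangle^{(int)}$ for some $k\le N$'' is internal in $N\in{}^*\N$ (using internality of $\hat\S$, ${}^*g$, and $\langle\cdot\rangle^{(int)}$), and it holds for every infinite $N$, since each $x\in\hat\S=\bigcup_{n\in\N}{}^*\langle C_n\rangle$ already lies in some ${}^*\langle C_{n_x}\rangle$ with $n_x\in\N\le N$. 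By underspill (the dual of overspill) it then holds for some finite $N_0$, whence $\hat\S\subseteq\langle{}^*g(N_0)\rangle^{(int)}={}^*\langle C_{N_0}\rangle$, contradicting $c_{N_0+1}\in\hat\S\setminus{}^*\langle C_{N_0}\rangle$. Therefore $\hat\S$ is external. Finally, by the Proposition asserting $\hat\S=\hat\A\cap{}^*\S$ whenever $\S\le\A$: if $\hat\A$ were internal, then $\hat\S$ would be an intersection of two internal sets, hence internal, contradicting the previous sentence. So $\hat\A$ is external, completing (c)$\,\Rightarrow\,$(a) and the cycle.

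I expect the underspill step to be the delicate point to present cleanly: it is where one genuinely needs that a strictly increasing $\omega$-chain of internal sets cannot have an internal union, equivalently that the enlargement of Section~1 is countably saturated, and the write-up should state this standing assumption explicitly. Everything else is bookkeeping with the characterizations of $\hat{\cdot}$ already proved, together with the two routine facts that $\langle F\rangle^{(int)}={}^*\langle F\rangle$ for finite $F$ and that ${}^*S$ and $S$ have the same standard elements --- it is exactly these that let ``not finitely generated'' block the collapse $\hat\S={}^*\langle C_{N_0}\rangle$.
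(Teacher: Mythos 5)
Your proposal is correct in substance, but the heart of it --- (c)$\Rightarrow$(a) --- runs along a genuinely different track from the paper. The paper argues $\neg(\text{b})\Rightarrow\neg(\text{c})$ directly: given $\hat\A\neq{}^*\A$ (hence $\A$ not finitely generated), for \emph{every} internal $\B$ with $A\subseteq B\leq{}^*\A$ it applies the concurrency principle to the relation $F\,r\,G\iff[F\subseteq G\ \&\ \langle G\rangle^{(int)}<\B]$ to produce a hyperfinite $G\supseteq A$ with $\langle G\rangle^{(int)}$ a \emph{proper} internal subalgebra of $\B$, so $\hat\A\leq\langle G\rangle^{(int)}<\B$ and no internal $\B$ can equal $\hat\A$; combining with the contrapositive of (b)$\Rightarrow$(a) closes the implication. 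You instead pass to the countably generated subalgebra $\S=\langle C\rangle$ built from a strictly increasing chain $\langle C_0\rangle\subsetneq\langle C_1\rangle\subsetneq\cdots$, identify $\hat\S=\bigcup_{n\in\N}{}^*\langle C_n\rangle$ via the finite-subset characterization, kill internality of $\hat\S$ by underspill applied to the internal family $N\mapsto\langle{}^*g(N)\rangle^{(int)}$, and then transfer externality up to $\hat\A$ through the proposition $\hat\S=\hat\A\cap{}^*\S$. Both are sound; the paper's concurrency argument is shorter, needs no auxiliary subalgebra and no intersection proposition, while yours needs only that $\N$ is external plus the internal definition principle (not the full concurrency/enlargement property at this step) and localizes exactly where externality is born. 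Two caveats on your version: the identity $S=\bigcup_n\langle C_n\rangle$ (hence $\hat\S=\bigcup_n{}^*\langle C_n\rangle$) uses that all operations are finitary, which is the paper's standing assumption here but worth stating; and your (a)$\Rightarrow$(b) leans on Lemma~\ref{FiniteCheckEq}, whose proof in the paper itself opens with ``since $\A$ is finitely generated, $\hat\A={}^*\A$,'' i.e.\ with the very implication being proved --- you should replace that citation by the direct argument you already have in hand, namely that transfer gives $\langle F\rangle^{(int)}={}^*\A$ and that $\langle F\rangle^{(int)}\leq\B$ for every internal $\B$ having $\A$ as a subreduct, whence ${}^*\A\leq\hat\A$.
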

\begin{proof}
    This proof is adapted from the argument provided by \cite{Insall}. For finite $F \subseteq A$, we know that ${}^*\left<F\right>=\left<F\right>^{(int)}$ because, given $\left<F\right> = (G, \mathcal{O})$, we have $x \in G \iff [x \in F$ or $\exists n\in \N\hbox{ such that for some } n\hbox{-ary } o \in \mathcal{O}$ and some $s \in (G\setminus\{x\})^n$ we have $o(s) = x]$. 

    By transfer and since ${}^*F = F$, we can state that $x \in {}^*G \iff [x \in F$ or $\exists n\in {}^*\N $ such that for some $ n\hbox{-ary } o \in {}^*\mathcal{O}$ and some $ s \in {}^*(G\setminus\{x\})^n = ({}^*G\setminus\{x\})^n$ we have $o(s) = x]$. 

    The previous transfer tells us that the underlying set of $\left<F\right>^{(int)}$ is just the ${}^*$-transform of the underlying set of $\left<F\right>$. And therefore, ${}^*\langle F\rangle={}^*(G,\mathcal{O})=({}^*G,{}^*\mathcal{O})=\left<F\right>^{(int)}$.
    
    $\mathbf{a \implies [b \& c]}$: If $\A = \left<F\right>$ for some finite $F \subseteq A$, then $\left<F\right>^{(int)} = {}^*\A$ by transfer. Thus, ${}^*\A = \bigcap \{\B \leq {}^*\A|\B $ is hyperfinitely generated and contains $ F\}$ $\subseteq \bigcap \{\B \leq {}^*\A|\B$ is an internal hyperfinitely generated algebra such that $\A$ is a subreduct of $\B\} = \hat{\A}$.

    $\mathbf{b \implies a}$: If $\hat{\A} = {}^*\A$, then for some hyperfinite $F \subseteq {}^*\A$, we have $\left<F\right>^{(int)} \supseteq \hat{\A} = {}^*\A$. This implies that ${}^*\A$ is hyperfinitely generated. Hence, by transfer, we know that $\A$ is finitely generated.
    
    $\mathbf{c \implies a}$: Assume ${}^*\A \neq \hat{\A}$ and let $\B < {}^*\A$ be internal with $A \subset B$. Since ${}^*\A \neq \hat{\A}$, we know that $\A$ is not finitely generated. 

    We want to show that $\hat{\A} \neq \B$, so let $r$ be the relation on the hyperfinite subsets of $\B$ such that $FrG \iff [F \subseteq G\ \& \left<G\right>^{(int)}\ <\ \B$]. Here, $r$ is clearly defined on the finite subsets of $\A$ and is, in fact, internal and concurrent, as well. Thus, by the concurrency principle, we know there exists hyperfinite $G \subseteq \B$ such that $FrG$ for all finite $F \subseteq A$. This tells us that $A \subseteq G$, and we have $\left<G\right>^{(int)} \ < \ \B$. Therefore, $\hat{\A} \neq \B$. So $\hat{\A}$ is not internal, and since we know $\neg a \implies \neg b$ and we just proved $\neg b \implies \neg c$, where $\neg$ means the logical negation of a statement, we can conclude that $c \implies a$.
\end{proof}


Extension monads behave well with respect to direct products but not as well with respect to coproducts or direct sums.

\begin{example}\label{}
    For two two-element groups, $\A$ and $\B$, in the variety of groups, we have $\hat{\A} \oplus \hat{\B} < \widehat{\A \oplus \B}$. This is clear because, given $\A$ and $\B$ are finite, we know $\hat{\A} = \A$ and $\hat{\B} = \B$. Therefore, $\hat{\A} \oplus \hat{\B} = \A \oplus \B$, which is infinite because nonidentity elements act freely with respect to one another. Since $\A \oplus \B$ is finitely generated, $\widehat{\A \oplus \B} = {}^*(\A \oplus \B)$. Moreover, because $\A \oplus \B$ is infinite, we know $\widehat{\A \oplus \B} = {}^*(\A \oplus \B) > \A \oplus \B = \hat{\A} \oplus \hat{\B}$. However, this lack of equality is not always true, even in the variety of groups, because the direct sum of two trivial groups is itself trivial, and thus, so is the extension monad.
\end{example}
\newline

A natural question that may arise from this example is how an extension monad of an extension of an algebra is related to the previous settings. The following few results display the behaviors of these types of field extensions of groups.

\begin{theorem}\label{}
    The extension monad of the additive group of rationals extended as a field extension by $\pi$, $\Q(\pi)$, is the group \newline $S = \left \{ \Sigma_{j=0}^n q_j\pi^j | n \in {}^*\N, q_j \in \hat{\Q} \right \}$ under addition.
\end{theorem}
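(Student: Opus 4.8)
The plan is to mirror the three-part structure of Propositions \ref{MultGroupMonad}--\ref{QRingMonad}: first check that $S$ is closed under the additive group operations; then show that $S$ is contained in every internal subgroup of ${}^*(\Q(\pi))$ that contains $\Q(\pi)$, so that $S\subseteq\widehat{\Q(\pi)}$; and finally show that every element of ${}^*(\Q(\pi))$ lying outside $S$ is omitted by some such internal subgroup, so that $\widehat{\Q(\pi)}\subseteq S$. One preliminary remark keeps the bookkeeping honest: transferring ``every element of $\Q(\pi)$ is a polynomial in $\pi$ over $\Q$'' together with the transcendence of $\pi$ over $\Q$ shows that every element of ${}^*(\Q(\pi))$ is, up to padding with zeros, \emph{uniquely} of the form $\sum_{j=0}^{\nu}r_j\pi^j$ with $\nu\in{}^*\N$ and $(r_j)$ an internal tuple from ${}^*\Q$. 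So each element has well-defined coefficients, and since $0\in\hat{\Q}$, an element lies in $S$ exactly when all of its coefficients lie in $\hat{\Q}$.

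For Step 1, $\hat{\Q}$ is a subgroup of ${}^*\Q$ (an intersection of subgroups of ${}^*\Q$ containing $\Q$), hence closed under addition and negation and containing $0$; since these operations act coefficientwise on polynomials, $S$ is closed under them, so $S$ is a subgroup of ${}^*(\Q(\pi))$ under addition. For Step 2, fix an internal subgroup $\B$ with $\Q(\pi)\le\B\le{}^*(\Q(\pi))$. For each standard $j$ the subgroup $\Q\pi^j\le\Q(\pi)$ is isomorphic to $\Q$ via $q\mapsto q\pi^j$; as this isomorphism extends to an internality-preserving isomorphism ${}^*(\Q\pi^j)\cong{}^*\Q$, the internal subgroup $\B\cap{}^*(\Q\pi^j)$ of ${}^*(\Q\pi^j)$, which contains $\Q\pi^j$, must contain $\widehat{\Q\pi^j}=\{q\pi^j:q\in\hat{\Q}\}$. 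Thus every monomial $q_j\pi^j$ appearing in an element of $S$ lies in $\B$, and since an internal subgroup is closed under hyperfinite sums (the transfer of ``a subgroup is closed under finite sums''), the hyperfinite sum $\sum_{j=0}^{n}q_j\pi^j$ lies in $\B$. As $\B$ was arbitrary, $S\subseteq\widehat{\Q(\pi)}$.

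For Step 3, let $x=\sum_{j=0}^{\nu}r_j\pi^j\in{}^*(\Q(\pi))\setminus S$; by the preliminary remark some coefficient $r_{j_0}\notin\hat{\Q}$, so, writing $r_{j_0}$ in lowest terms, its denominator has an infinite prime factor $\psi$, exactly as argued in the proof of Proposition \ref{AddGroupMonad}. Let $\B$ be the internal set of all $\sum_j s_j\pi^j\in{}^*(\Q(\pi))$ such that no $s_j$, in lowest terms, has denominator divisible by $\psi$. Then $\B$ is a subgroup, because the denominator of a sum of ${}^*$-rationals divides the least common multiple of their denominators; it contains $\Q(\pi)$, because a standard rational has a standard denominator and the infinite prime $\psi$ cannot divide a standard integer; and it omits $x$. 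Hence $x\notin\widehat{\Q(\pi)}$, which gives $\widehat{\Q(\pi)}\subseteq S$ and, together with Step 2, the desired equality.

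The crux, I expect, is Step 2 --- concretely, getting the \emph{hyperfinite}-degree members of $S$ into an arbitrary internal extension $\B$, not merely the standard-degree ones. The subtlety is that $\B$ is only an additive group, with no multiplicative closure, so one cannot pass from $\tfrac1m\pi^j\in\B$ to $\tfrac1{m^2}\pi^j\in\B$; each monomial must be reached through the copy of $\Q$ seated in its own degree, and the overall element then assembled using closure of internal subgroups under hyperfinite addition. Carrying this out robustly --- in particular for monomials $q_j\pi^j$ of nonstandard degree $j$, where there is no standard group $\Q\pi^j$ to invoke --- is where an argument beyond the routine bookkeeping above will be needed.
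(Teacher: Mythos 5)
Your plan is the same as the paper's --- coefficientwise closure, membership of $S$ in every internal additive extension via closure under hyperfinite sums, and exclusion of everything else by an internal group manufactured from an infinite prime in a denominator, as in the computation of $\hat{\Q}$ as an additive group --- and two of your three steps are, if anything, tidier than the paper's: the uniqueness-of-coefficients remark lets you collapse the paper's two exclusion cases into one, and your observation that $\B\cap{}^*(\Q\pi^j)$ is an internal subgroup of ${}^*(\Q\pi^j)$ containing $\Q\pi^j$, hence containing $\widehat{\Q\pi^j}=\{q\pi^j\mid q\in\hat{\Q}\}$, supplies a justification the paper simply omits: its proof asserts outright that $q_j\pi^j\in\A$ because $q_j\in\hat{\Q}$ and $\pi^m\in\Q(\pi)$ for all $m\in\N$, which addresses neither nonstandard coefficients nor nonstandard exponents.

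The gap you flag at the end, however, is not a missing lemma but a genuine obstruction: monomials of nonstandard degree with coefficients in $\hat{\Q}$ are \emph{not} in every internal extension. Fix an infinite $\gamma\in{}^*\N$ and consider $\left\{\sum_{i<\gamma}q_i\pi^i\mid q_i\in{}^*\Q\right\}$, the internal additive group of hyperpolynomials of degree below $\gamma$; this is exactly the group the paper itself uses in the second case of its exclusion argument, it contains every standard polynomial over $\Q$ (which is what the paper in effect treats as the algebra being enlarged; if one insists on the honest field $\Q(\pi)$, then already your preliminary transfer remark fails, since $1/\pi$ lies in $\Q(\pi)\subseteq\widehat{\Q(\pi)}$ but is not a hyperpolynomial and not in $S$), and it omits $\pi^\gamma$, which belongs to $S$ as literally defined because $1\in\hat{\Q}$ and $\gamma\in{}^*\N$. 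So $S\not\subseteq\widehat{\Q(\pi)}$ when $n$ is allowed to range over ${}^*\N$, and no refinement of your Step 2 can reach those monomials; what your Step 2 actually proves is the standard-degree part $\left\{\sum_{j=0}^{n}q_j\pi^j\mid n\in\N,\ q_j\in\hat{\Q}\right\}$, and the paper's own degree-cutoff group shows this is the most one can get. The workable statement therefore takes $n\in\N$; but note that under that reading your exclusion step becomes incomplete, because an element all of whose coefficients lie in $\hat{\Q}$ but which has a nonzero coefficient at an infinite degree must also be excluded, and for that you need precisely the degree-cutoff group above (the paper's second case), not the prime-denominator group.
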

\begin{proof}
    Consider $\Sigma_{j=0}^n q_j\pi^j - \Sigma_{i=0}^m r_i\pi^i$. If we add extra zeros when necessary, we get $\Sigma_{l=0}^{max(n,m)} (q_l - r_l)\pi^l$, which is clearly an element of $S$.
    
    Let $\alpha \in {}^*(\Q(\pi)),$ which is not in $S$, then there are two cases. In the first case, we assume that in every expression of the form $\Sigma_{j \in H} q_j\pi^j = \alpha$ for $H \in {}^*\mathcal{F}({}^*\Q)$, for some infinite prime hyperfinite $\psi$ and some $\eta \in {}^*\N$, we get $\Sigma_{j \in H} q_j\pi^j = \Sigma_{i \in H \setminus \{\eta\}} q_i\pi^i + \frac{\xi}{\phi \psi}\pi^\eta$. We can construct the group 
    
    \[
    \begin{array}{l}
    \left \{\phantom{{}^{(int)}_{({}^*\Z;\cdot^{{}^*\Z})}}\hspace{-.35in} \Sigma_{j \in H} q_j\pi^j | \forall H \in {}^*\mathcal{F}({}^*\N), \left \{q_j|j \in H \right \} \in {}^*\mathcal{P} ({}^*\Q)\cap \right. \\ 
    \left. \left\{\frac{x}{y}|x \in {}^*\Z, y \in \left<{}^*\Z \setminus \left \{\psi \right \} \cap (-\phi\psi +1, \phi\psi -1)\right>^{(int)}_{({}^*\Z;\cdot^{{}^*\Z})} \right\} \right\},
    \end{array}
    \]
\newline
which excludes $\alpha$ and is an internal group since the group \newline $\left \{\frac{x}{y}|x \in {}^*\Z, y \in \left <\Z\setminus\{0\}\right >^{(int)}_{({}^*\Z;\cdot^{{}^*\Z})} \right\}$ is closed under hyperfinite addition.
    
    In the second case, we assume there exists a summand in every expression of the form $r\pi^\gamma$ for $r \neq 0$ and fixed $\gamma \in {}^*\N\setminus\N$. We can also construct the group $\{\Sigma_{i < \gamma} q_i \pi^i | q_i \in {}^*\Q \}$, which is closed under hyperfinite addition, because we can transfer the statement $\forall n \in \N, \forall \{q_i| 0 \leq i \leq n\} \Sigma_0^n q_i\pi^i \neq \pi^{n+1}$ since $\pi$ is transcendental. This set clearly satisfies all the other group closure properties and is internal because the summations do not violate the overspill principle.
    
    Let $s = \Sigma_{j=0}^n q_j\pi^j \in S$ for $n \in {}*\N$ and let $\A \leq {}^*(\Q(\pi))$ be internal such that $Q(\pi) \subseteq A$. Since $\A$ is internal, it is closed under hyperfinite sums and, thus, finite sums. Since $q_j \in \hat{\Q}$, the product $q_j \pi^j$ is in $\A$, and because $\pi^m \in \Q(\pi) \forall m \in \N$, we see that $S \subseteq A$.
\end{proof}

For extensions by one element, as seen above, the behavior is quite nice. Another question then arises: what happens with arbitrarily many extending elements? As shown below, the behavior is not too dissimilar from single-element extensions. For clarity, a \textbf{base} of an extension of a field is a minimal subset of the extension such that extending the field by that set results in the extension.

\begin{theorem}\label{}
    Let $B$ be a base for the extension of $\Q$ to $\R$. The extension monad of the real numbers as a group under addition is the group whose underlying set is $\{ \Sigma_{i \in I} \Sigma_{j=0}^{n_i} q_{i,j}\theta_i^j | (\forall i \in I)(  \theta_i \in B, q_{i,j} \in \hat{\Q}), I \in {}^*\mathcal{F}({}^*\N), \{n_i | i\in I\} \subseteq \N\}$.
\end{theorem}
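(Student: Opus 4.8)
The plan is to follow the blueprint of the preceding theorem on $\Q(\pi)$, with the single transcendental replaced by the family $B$ and with care taken to keep every object hyperfinite. Write $S$ for the set displayed in the statement, regarded as a subset of ${}^*\R$ under ${}^*+$; since its group structure is inherited from $({}^*\R,{}^*+)$, it is enough to prove the two set inclusions $S\subseteq\hat\R$ and $\hat\R\subseteq S$ after first checking that $S$ is an additive subgroup of ${}^*\R$ containing $\R$. That $S$ contains $\R$ is immediate from the definition of a base: each real is a polynomial expression over $\Q$ in finitely many elements of $B$, which is a special case of the form defining $S$ (finite $I$, standard coefficients). Closure of $S$ under subtraction is the familiar ``pad with zeros'' computation: given two elements of $S$ over hyperfinite index sets $I_1,I_2$ with internal coefficient and exponent data, form the element over $I_1\cup I_2$ (still hyperfinite) whose exponent at $i$ is the maximum of the two exponents there (a pointwise maximum of internal, $\N$-valued functions is internal and $\N$-valued) and whose coefficients are the corresponding differences, which lie in $\hat\Q$ since $\hat\Q$ is a group under addition by Proposition~\ref{AddGroupMonad}.

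For $S\subseteq\hat\R$ I would use the fact that, by definition, $\hat\R$ is the intersection of all internal subgroups $\I$ of $({}^*\R,{}^*+)$ with $\R\leq\I$, and show that every defining summand $q\,\theta^j$ (for $q\in\hat\Q$, $\theta\in B$, $j\in\N$) of an element of $S$ lies in every such $\I$. The key observation is that for nonzero $t\in\R$, multiplication by $t$ is an internal automorphism of $({}^*\R,{}^*+)$ carrying $\R$ onto $\R$ because $\R$ is a field; hence $t^{-1}\I$ is again an internal subgroup containing $\R$, and intersecting with ${}^*\Q$ yields $\hat\Q\leq t^{-1}\I$, i.e.\ $t\,\hat\Q\subseteq\I$. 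Taking $t=\theta^j$ gives $q\,\theta^j\in\I$ for all $q\in\hat\Q$; and since an internal subgroup is closed under ${}^*$-finite, hence hyperfinite, sums of an internal family (which is exactly what the sums defining $S$ are), the whole element $\sum_{i\in I}\sum_{j\leq n_i}q_{i,j}\theta_i^j$ lies in $\I$. As this holds for every internal $\I\geq\R$, we conclude $S\subseteq\hat\R$.

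The reverse inclusion is the substantive half, and here I would imitate the two-case analysis of the $\Q(\pi)$ proof. Given $\alpha\in{}^*\R\setminus S$, transfer (of the fact that $B$ generates $\R$ over $\Q$, together with the essential uniqueness of the resulting representations in the algebraically independent $B$) provides a ${}^*$-finite representation of $\alpha$ in the shape defining $S$ but with data drawn from ${}^*\Q$, ${}^*\N$ and ${}^*B$; that $\alpha\notin S$ means that in every such representation at least one of three defects occurs: some coefficient lies outside $\hat\Q$, i.e.\ has an infinite prime $\psi$ in its lowest-terms denominator; some exponent $\gamma$ is nonstandard; or some basis element used lies in ${}^*B\setminus B$. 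For each defect I would build an internal subgroup of $({}^*\R,{}^*+)$ containing $\R$ but omitting $\alpha$: for the first, replace the coefficient set $\Q$ by the truncated internal set $\bigl\{x/y\mid x\in{}^*\Z,\ y\in\langle{}^*\Z\setminus\{\psi\}\cap(-\eta+1,\eta-1)\rangle^{(int)}\bigr\}$ exactly as in Propositions~\ref{AddGroupMonad} and~\ref{QRingMonad}; for the second, cap all exponents below $\gamma$ as in the ``second case'' of the $\Q(\pi)$ proof, using that each $\theta\in B$ is transcendental over $\Q$; for the third, use the concurrency principle to pass to a single internal hyperfinite $B'$ with $B\subseteq B'\subseteq{}^*B$ that avoids the offending basis element, and take all expressions over $B'$ with coefficients and exponents from ${}^*\Q$ and ${}^*\N$. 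In each case the resulting set is cut out by an internal condition, is closed under $\pm$ by ``pad with zeros'', and contains $\R$, while by construction it misses $\alpha$, so $\alpha\notin\hat\R$.

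The step I expect to be the main obstacle is precisely this last construction: unlike the one-generator case, a single $\alpha$ may scatter its ``defect'' across hyperfinitely many summands and hyperfinitely many basis elements simultaneously, so one cannot simply quote the one-variable truncations. The remedy is to use concurrency/overspill once to produce, all at once, a single internal hyperfinite $B'\supseteq B$, a single internal exponent bound, and a single internal denominator bound $\eta$ that together certify $\alpha\notin S$, and then to verify that the associated truncated set really is an internal subgroup of $({}^*\R,{}^*+)$; as in Propositions~\ref{AddGroupMonad} and~\ref{QRingMonad}, the heart of that verification is the closure of the truncated coefficient set under hyperfinite addition, which is where most of the routine checking lives.
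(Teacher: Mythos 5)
Your proposal follows essentially the same route as the paper's proof: reduce to the blueprint of the $\Q(\pi)$ theorem, verify closure of $S$ by padding with zeros, obtain $S\subseteq\hat{\R}$ from the fact that each summand $q\,\theta^j$ lies in every internal additive supergroup of $\R$ together with closure of internal groups under hyperfinite sums, and exclude elements of ${}^*\R\setminus S$ by constructing truncated internal subgroups as in Propositions \ref{prp:AddGroupMonad} and \ref{prp:QRingMonad}. If anything, you supply details the paper's two-sentence argument leaves implicit\textbf{---}the scaling argument showing $q\,\theta^j\in\I$ and the treatment of defects scattered over hyperfinitely many summands or involving elements of ${}^*B\setminus B$\textbf{---}so the approach matches, with your version being the more explicit one.
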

\begin{proof}
    For each individual element of $B$, an adjustment of the previous proof suffices for closure and exclusion of other elements. Therefore, since $\Q(B) = \R$ and since every internal algebra $\A \subseteq {}^*\R$ that contains $\R$ is closed under hyperfinite sums, we find that every sum of the form $\Sigma_{i \in I} \Sigma_{j=0}^{n_i} q_{i,j}\theta_i^j$ is in $\A$.
\end{proof}

This result can be generalized to any group that can be expanded to a field.

\begin{theorem}\label{}
    The extension monad of infinite group $\G$ that is an additive reduct of field $\F$ is the extension monad of prime subfield $\P$ extended by a base of $\F$ over $\P$. Which is to say, for some base $B$ of the extension of $\F$ over $\G$ as a field extension:
    
    $\S = \left \{ \Sigma_{i \in \gamma}\Sigma_{j=0}^{n_i} p_{i,j}a_i^j | \gamma \in {}^*Fin(\N), n \in \N, a_i \in B, p_{i,j} \in \hat{\P} \right \} =\hat{\F}$.
\end{theorem}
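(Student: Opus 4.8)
The plan is to reduce the statement to the two preceding theorems — the computation of the extension monad of $\Q(\pi)$ and that of $\R$ over $\Q$ — by substituting the prime subfield $\P$ for $\Q$ and $\F$ for $\R$ throughout, and then to upgrade the resulting description of $\hat\F$ to one of $\hat\G$ by the field-versus-additive-reduct argument of Proposition~\ref{QRingMonad}. The only external input needed is $\hat\P$, which is already available: in characteristic $0$ one has $\P\cong\Q$ and $\hat\P$ is the set of Proposition~\ref{QRingMonad}; in characteristic $p$ the field $\P\cong\F_p$ is finite, hence finitely generated, so $\hat\P=\P$ by Theorem~\ref{FinitelyGeneratedEquiv} and the formula for $\S$ collapses to a purely ${}^*$-polynomial description. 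In both cases $\F=\P(B)$ for the chosen base $B$.

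First I would prove the containment $\S\subseteq\hat\F$. Let $\A$ be any internal subalgebra of ${}^*\F$, in the full field signature, having $\F$ as a subreduct. Then $\A\cap{}^*\P$ is an internal structure containing $\P$, so $\hat\P\subseteq\A$; each power $a_i^{\,j}$ with $a_i\in B$ lies in $\F\subseteq\A$; hence each product $p_{i,j}a_i^{\,j}$ with $p_{i,j}\in\hat\P$ lies in $\A$; and since $\A$ is internal it is closed under hyperfinite sums, so every element $\Sigma_{i\in\gamma}\Sigma_{j=0}^{n_i}p_{i,j}a_i^{\,j}$ of $\S$ lies in $\A$. Intersecting over all such $\A$ gives $\S\subseteq\hat\F$, and the same computation run over the internal additive subgroups of ${}^*\G$, using the additive description of $\hat\P$ from Proposition~\ref{AddGroupMonad}, gives $\S\subseteq\hat\G$.

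Next I would prove $\hat\F\subseteq\S$ by exclusion, just as in the $\Q(\pi)$ and $\R$ proofs: given $\alpha\in{}^*\F\setminus\S$, fix a representation of $\alpha$ as a hyperfinite $\P$-combination of powers of base elements and split into two cases. If some representation is forced to carry an infinite hyperfinite prime $\psi$ in a denominator of one of its coefficients, restrict the coefficient field to $\bigl\{x/y : x\in{}^*\Z,\ y\in\langle{}^*\Z\setminus\{\psi\}\cap(-\phi\psi+1,\phi\psi-1)\rangle^{(int)}\bigr\}$ as in Proposition~\ref{AddGroupMonad}; this set is closed under hyperfinite sums, so it generates an internal subfield of ${}^*\F$ that contains $\F$ yet omits $\alpha$. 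If instead every representation of $\alpha$ must use a monomial $ra_i^{\,\gamma}$ with $r\neq0$ and $\gamma\in{}^*\N\setminus\N$, use the transferred algebraic-independence relations among the base elements (no power of a base element equals a bounded-degree $\P$-combination of strictly lower powers of base elements) to see that $\{\Sigma_i\Sigma_{j<\gamma}q_{i,j}a_i^{\,j}\}$ is an internal subfield containing $\F$ and omitting $\alpha$. Either way $\alpha\notin\hat\F$, so $\hat\F=\S$. Finally, any internal subfield of ${}^*\F$ restricts to an internal additive subgroup, so $\hat\G\subseteq\hat\F=\S$; combined with $\S\subseteq\hat\G$ from the previous paragraph and the observation that $\S$ is closed under multiplication (the analogue here of Proposition~\ref{QRingMonad}), this yields $\hat\G=\S=\hat\F$.

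The main obstacle is the exclusion step. For a general base $B$ the field $\F$ need not be purely transcendental over $\P$, so the algebraic relations among the base elements complicate two things at once: isolating a canonical-enough representation of an arbitrary $\alpha\in{}^*\F\setminus\S$ so that the dichotomy ``infinite prime in a denominator'' versus ``infinite-degree monomial'' is genuinely exhaustive, and verifying that the truncated coefficient sets and the truncated-degree sets are closed not only under addition but also under multiplication and inversion, i.e.\ that they really are internal \emph{subfields} rather than merely subgroups (this last point is also what forces $\S$ to serve simultaneously as the field monad and the additive-group monad). By contrast, the closure direction $\S\subseteq\hat\F$, the characteristic-$p$ bookkeeping, and the reduction to the two preceding theorems are routine given what has already been established.
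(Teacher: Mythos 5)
Your skeleton matches the paper's: additive closure of $\S$, the containment of $\S$ in the monad via $\hat{\P}$ lying inside every internal witness together with closure under hyperfinite sums, and an exclusion dichotomy (a coefficient outside $\hat{\P}$ versus a monomial of infinite degree) imported from the $\Q(\pi)$ and $\R$ computations. The genuine gap is the one you flag yourself and leave unresolved: you require the exclusion witnesses to be internal \emph{subfields} of ${}^*\F$, and the sets you propose are not subfields and cannot be made into ones. The degree-truncated set $\left\{\Sigma_{i}\Sigma_{j<\gamma}q_{i,j}a_i^j\right\}$ is not closed under multiplication (two monomials of degree $\gamma-1$ multiply to one of degree $2\gamma-2\geq\gamma$) nor under inversion, and the restricted-denominator coefficient set borrowed from Proposition~\ref{AddGroupMonad} is not closed under inverses either (it contains $\psi$ itself, but not $\frac{1}{\psi}$). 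So as written your exclusion step fails, and the chain $\S\subseteq\hat{\G}\subseteq\hat{\F}\subseteq\S$ collapses because the inclusion $\hat{\F}\subseteq\S$ was never actually established.

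The paper avoids this obstacle entirely because the theorem computes the extension monad of the additive reduct $\G$: an exclusion witness need only be an internal \emph{additive group} containing $\F$, and both truncated sets are closed under hyperfinite sums and negation, which is all the paper checks. Your detour through the field monad also quietly assumes that the monad in the full field signature agrees with the additive-reduct monad in general; the paper proves such agreement only for $\Q$ (Proposition~\ref{QRingMonad}) and explicitly leaves the general ring version as a conjecture, so it cannot be invoked here. If you replace ``internal subfield'' by ``internal additive subgroup containing $\F$'' throughout your exclusion paragraph and drop the claims about the field monad, your argument becomes essentially the paper's proof, which likewise reduces the closure and inclusion steps to the two preceding theorems and treats only the group structure.
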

\begin{proof}
    Closure under additive group operations behaves the same as in the previous proofs, so $\S$ is a group. The arguments for equality of the underlying sets will also go similarly, but some of the details will be covered here. Say we have some element of the form $s = \Sigma_{i \in \gamma}\Sigma_{j=0}^{n_i} p_{i,j}a_i^j = \Sigma_{k \in \gamma}\Sigma_{j=0}^{n_i} r_{i,j}a_i^j + p^\prime a_{i^\prime}^{j^\prime}$ for which $p^\prime \notin \hat{Q}$ for $i^\prime \in \gamma$, $j^\prime \leq n_i$, and $r_{i^\prime, j^\prime} = 0$. Since $p^\prime \notin \hat{P}$, there exists internal additive group $\A \leq {}^*\P$ such that $p^\prime \notin A$ and $P \subseteq A$. Hence, the internal group $\left \{ \Sigma_{i \in \gamma}\Sigma_{j \in \eta_i} p_{i,j}a_i^j | \gamma, \eta_i \in {}^*Fin(\N), a_i \in {}^*F \setminus {}^*P, p_{i,j} \in A \right \}$ excludes $s$ and contains $\F$.
    
    Say we have an element of the form $t = \Sigma_{i \in \gamma}\Sigma_{j \in \eta_i} p_{i,j}a_i^j = \Sigma_{k \in \gamma}\Sigma_{l \in \eta_i} r_{i,j}a_i^j = p^\prime a_{i^\prime}^{j^\prime}$ for $p^\prime \neq 0$, $r_{i^\prime, j^\prime} = 0$, $i^\prime \in \gamma$, and $j^\prime \notin {}^*\N$. The internal group 
    
    \noindent $\left \{ \Sigma_{i \in \gamma}\Sigma_{j \in \nu_i} p_{i,j}a_i^j | \gamma \in {}^*Fin(\N), \nu_i \leq j^\prime, a_i \in {}^*F \setminus {}^*P, p_{i,j} \in {}^*F \right \}$ 
    excludes $t$ and contains $\F$, so $\S \geq \hat{\P}$.
    
    To show $\S \leq \hat{\P}$, take any internal $\A \leq {}^*\F$ that contains $\F$. Since $\P \leq \F$, we have $\hat{P} \subseteq \hat{F} \subseteq A$, so the product $q_{i,j} a_i^j$ for $q_{i,j} \in \hat{P}, a_i \in B$, and $j \in \N$ is in $\A$. Therefore, because internal groups are closed under hyperfinite sums, we get our result.
\end{proof}

We now transition our attention to determining when algebras are the retracts of their enlargement or their extension monad. Many of the following results and computations are relatively simple but set the stage for more general statements.

\begin{example}\label{}
    $\hat{\Q}$ is not a retract of ${}^*\Q$ as a ring. Take any infinite positive prime element $\rho$. It is clear that $\frac{1}{\rho} \notin \hat{Q}$, but we have $\rho * \frac{1}{\rho} = 1$, so if we were to assume that $f: {}^*Q \rightarrow \hat{Q}$ is a retract, we see that $1 = f(1) = f(\rho * \frac{1}{\rho}) = f(\rho) f(\frac{1}{\rho}) = \rho * f(\frac{1}{\rho})$. This shows us that $f(\frac{1}{\rho}) = \frac{1}{\rho}$, which produces a contradiction.
\end{example}
\newline

The next several results examine the relation between algebras that are retracts of either their enlargement or extension monad and the resulting relation between their products, sums, or intersections and the enlargement or extension monad of those products, sums, or intersections. We will significantly improve these results later, easing into the better result by the end of the section.

\begin{proposition}\label{}
If $\mathbb{A}$ is a retract of ${}^*\mathbb{A}$ and $\mathbb{B}$ is a retract of ${}^*\mathbb{B}$, then $\mathbb{A}\times\mathbb{B}$ is a retract of its enlargement, as well.
\end{proposition}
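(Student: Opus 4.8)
The plan is to build the required retraction directly from the two given ones, using the universal property of the direct product. Let $f\colon {}^*\A\to\A$ and $g\colon {}^*\B\to\B$ be homomorphisms that are the identity on (the standard copies of) $\A$ and $\B$ respectively, and write $\P=\A\times\B$ with its projection homomorphisms $\rho_\A\colon\P\to\A$ and $\rho_\B\colon\P\to\B$. By transfer, ${}^*\P$ carries homomorphisms ${}^*\rho_\A\colon{}^*\P\to{}^*\A$ and ${}^*\rho_\B\colon{}^*\P\to{}^*\B$, and because the assertion ``$\P$, together with $\rho_\A$ and $\rho_\B$, is a direct product of $\A$ and $\B$'' is first-order expressible once the number of factors is fixed at two, transfer shows that ${}^*\P$ with ${}^*\rho_\A,{}^*\rho_\B$ is a direct product of ${}^*\A$ and ${}^*\B$; equivalently the pairing $x\mapsto({}^*\rho_\A(x),{}^*\rho_\B(x))$ is an isomorphism ${}^*\P\cong{}^*\A\times{}^*\B$, under which the standard copy of $\P$ sits inside ${}^*\P$ with ${}^*\rho_\A,{}^*\rho_\B$ restricting to $\rho_\A,\rho_\B$ on it.

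Next I would form the composites $f\circ{}^*\rho_\A\colon{}^*\P\to\A$ and $g\circ{}^*\rho_\B\colon{}^*\P\to\B$, which are homomorphisms, and apply the universal property of $\P=\A\times\B$ to obtain the unique homomorphism $h\colon{}^*\P\to\P$ with $\rho_\A\circ h=f\circ{}^*\rho_\A$ and $\rho_\B\circ h=g\circ{}^*\rho_\B$; this $h$ is the proposed retraction. To verify $h$ is the identity on the standard copy of $\P$, let $\iota\colon\P\to{}^*\P$ be the standard embedding and take any $p\in P$: since $\rho_\A$ is a standard function, ${}^*\rho_\A(\iota(p))=\rho_\A(p)\in A$, so $(f\circ{}^*\rho_\A)(\iota(p))=\rho_\A(p)$ because $f$ fixes $\A$ pointwise, and likewise $(g\circ{}^*\rho_\B)(\iota(p))=\rho_\B(p)$. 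Hence $\rho_\A\circ(h\circ\iota)=\rho_\A$ and $\rho_\B\circ(h\circ\iota)=\rho_\B$; since $\mathrm{id}_\P$ satisfies these same identities, the uniqueness clause in the universal property of $\P=\A\times\B$ forces $h\circ\iota=\mathrm{id}_\P$. Thus $h$ exhibits $\A\times\B$ as a retract of ${}^*(\A\times\B)$. An arbitrary finite product $\A_1\times\cdots\times\A_n$ is handled identically with $n$ projections, or by induction using associativity of the product.

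The one step that calls for genuine care is the transfer identification ${}^*(\A\times\B)\cong{}^*\A\times{}^*\B$ with the transferred projections\textbf{---}exactly the ``${}^*$ of a finite product is again a finite product'' phenomenon flagged in the remark preceding Theorem~\ref{UrProdMonad}, which must not be conflated with any internal direct product in the group-theoretic sense. Once that is in hand the remainder is a formal diagram chase with no analytic or combinatorial content; in particular the argument never uses any special interaction of the monad or enlargement operators with products beyond the trivial transfer of the projection maps, and it does not require the stronger identity $\widehat{\A\times\B}=\hat\A\times\hat\B$ of Theorem~\ref{UrProdMonad}.
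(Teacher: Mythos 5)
Your proof is correct and takes essentially the same route as the paper: both rest on the transfer identification ${}^*(\A\times\B)\cong{}^*\A\times{}^*\B$ and then apply the two retractions entrywise, your universal-property formulation being just a more explicit rendering of the paper's one-line ``construct the retract entrywise.'' No gap to report.
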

\begin{proof}
Because ${}^*(\A\times\B) = {}^*\A\times{}^*\B$, a retract can be constructed entrywise.
\end{proof}

\begin{proposition}\label{}
If $\mathbb{A}\leq\mathbb{C}$ is a retract of ${}^*\mathbb{A}$ and $\mathbb{B}\leq\mathbb{C}$ is a retract of ${}^*\mathbb{B}$, then $\mathbb{A}\cap\mathbb{B}$ is a retract of its enlargement, as well.
\end{proposition}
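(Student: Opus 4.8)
The plan is to transfer the problem into the common ambient algebra $\C$ and then to assemble a retraction of ${}^*(\A\cap\B)$ onto $\A\cap\B$ out of the two given retractions $f\colon{}^*\A\to\A$ and $g\colon{}^*\B\to\B$. First I would record the transfer facts. Since $\A\cap\B$ is a subalgebra of $\C$ (an intersection of subalgebras), its enlargement is ${}^*(\A\cap\B)$, and by transfer of ``$\forall x\,(x\in A\cap B \leftrightarrow x\in A \wedge x\in B)$'' together with ${}^*(A\cap B) = {}^*A \cap {}^*B$ we obtain ${}^*(\A\cap\B) = {}^*\A \cap {}^*\B$ as an internal subalgebra of ${}^*\C$. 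I would also note the bookkeeping identity $\A\cap{}^*\B = \A\cap\B = {}^*\A\cap\B$: a standard element of ${}^*\B$ is a urelement $b$ with ${}^*b=b$, hence already lies in $\B$. The upshot is that an element of $\A$ lies in $\A\cap\B$ exactly when it also lies in ${}^*\B$ (and symmetrically), which is the criterion telling us when a candidate map ``lands in the intersection.''

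Next I would set up the candidate. Restricting $f$ to the internal subalgebra ${}^*(\A\cap\B) = {}^*\A\cap{}^*\B$ gives a homomorphism $\bar f\colon{}^*(\A\cap\B)\to\A$ fixing $\A\cap\B$ pointwise, and likewise $\bar g\colon{}^*(\A\cap\B)\to\B$ fixing $\A\cap\B$. Bundling them, $(\bar f,\bar g)\colon{}^*(\A\cap\B)\to\A\times\B$ is a homomorphism; equivalently, identifying ${}^*(\A\cap\B)$ with the fibre product $\{(a,b)\in{}^*\A\times{}^*\B : a = b \text{ in } {}^*\C\}$, it is the restriction to that fibre product of the entrywise retraction $h(a,b) = (f(a),g(b))$ of ${}^*(\A\times\B) = {}^*\A\times{}^*\B$ onto $\A\times\B$ furnished by the preceding Proposition. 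Its image is a subalgebra $\P'$ of $\A\times\B$ containing the diagonal copy $\Delta = \{(z,z) : z\in\A\cap\B\}$, and on $\P'$ it is the identity. Hence if one shows $\P' = \Delta$\textbf{---}equivalently $f(x) = g(x)$ for every $x\in{}^*(\A\cap\B)$, equivalently (by the bookkeeping identity) that $\bar f$ has image inside $\B$, hence inside $\A\cap\B$\textbf{---}then composing $(\bar f,\bar g)$ with the isomorphism $\Delta\cong\A\cap\B$ yields a homomorphism ${}^*(\A\cap\B)\to\A\cap\B$ fixing $\A\cap\B$, which is the retraction sought. That the hypothesis $\A,\B\leq\C$ must enter somewhere is visible from the two-element group example above, where the corresponding statement for $\oplus$ fails; it enters precisely at this point.

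The main obstacle is establishing this compatibility: nothing in the hypotheses forces $f$ and $g$ to agree on the nonstandard part of ${}^*(\A\cap\B)$, so a priori $\bar f$ and $\bar g$ may surject onto subalgebras strictly between $\A\cap\B$ and $\A$, $\B$ respectively. The route I would take is to \textbf{correct} $f$ rather than take it as given: $f$ exhibits $\A$ as a transversal to the congruence $\ker f$ on ${}^*\A$ (the general-algebra analogue of $\A$ being a direct factor of ${}^*\A$), so I would analyze how the internal subalgebra ${}^*(\A\cap\B)$ meets $\ker f$ versus $\A$, re-route $f$ on ${}^*(\A\cap\B)$ through $\bar g$, leave it unchanged on an internal complement of ${}^*(\A\cap\B)$ inside ${}^*\A$, and verify that the patched map is still a homomorphism fixing $\A$; for that corrected $f$ one would have $\bar f = \bar g$ and $\P' = \Delta$. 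The congruence-theoretic bookkeeping in this patching step\textbf{---}checking that a homomorphism, not merely a function, results, and that it still restricts to the identity on $\A$\textbf{---}is where the real care is needed. An alternative worth trying in parallel is to prove directly that $\A\cap\B$ is a retract of ${}^*\A$ (then restrict along ${}^*(\A\cap\B)\leq{}^*\A$), but this reduces to $\A\cap\B$ being a retract of $\A$, which is not obvious and again seems to require the $g$-side hypothesis essentially, since $\A$ is not contained in ${}^*\B$.
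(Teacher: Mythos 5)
You have correctly identified the point on which everything turns: restricting $f$ to ${}^*(\A\cap\B)={}^*\A\cap{}^*\B$ gives a homomorphism into $\A$ fixing $\A\cap\B$ pointwise, and the only question is whether its image lies in $\B$ (equivalently whether $\bar f=\bar g$, i.e.\ $\P'=\Delta$). But your proposal does not resolve that question; it only names a strategy. The proposed repair\textbf{---}re-routing $f$ on ${}^*(\A\cap\B)$ through $\bar g$ while leaving it unchanged on an ``internal complement'' of ${}^*(\A\cap\B)$ inside ${}^*\A$\textbf{---}does not work as described: in a general algebra a subalgebra has no complement, so there is no internal piece of ${}^*\A$ on which to ``leave $f$ alone,'' and redefining a homomorphism on a subalgebra while retaining its old values elsewhere will in general violate the homomorphism condition on mixed inputs $o(x,y)$ with $x\in{}^*(\A\cap\B)$ and $y\notin{}^*(\A\cap\B)$; nothing in the sketch explains why the patched map is still a homomorphism, nor why it still fixes $\A$. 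Since you yourself defer exactly this step (``where the real care is needed''), the central claim of the proof is left unproved.

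For comparison, the paper's own proof is precisely the one-line restriction argument that you set up and then distrusted: because ${}^*(\A\cap\B)={}^*\A\cap{}^*\B$, it asserts that the restriction of either given retraction serves as the desired retraction, and it does not engage with the image question you raise, so your concern is not addressed there either. If you want a watertight way around that concern, note that by the paper's final theorem any algebra that is a retract of its enlargement is finite; under the hypotheses $\A$ and $\B$ are therefore finite, so ${}^*A=A$ and ${}^*B=B$, hence ${}^*(\A\cap\B)$ has underlying set $A\cap B$ and the identity already witnesses the retraction. That closes the gap, but it does so by appealing to a later and much stronger result, not by the patching construction you propose, and not by the bare restriction the paper intends.
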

\begin{proof}
Because ${}^*(\A\cap\B) = {}^*\A\cap{}^*\B$, a restriction of either retract can be applied to the intersection of the enlargements.
\end{proof}

\begin{proposition}\label{EnlargementCounterEx}
If $\mathbb{A}$ is a retract of ${}^*\mathbb{A}$ and $\mathbb{B}$ is a retract of ${}^*\mathbb{B}$, then $\mathbb{A}\oplus\mathbb{B}$ is not necessarily a retract of its enlargement.
\end{proposition}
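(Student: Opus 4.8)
The plan is to produce one explicit counterexample. Take $\mathbb{A}=\mathbb{B}=\Z/2\Z$, the two-element group, regarded in the variety of groups; being finite (with a finite signature of finitary operations) each equals its own enlargement, so each is trivially a retract of its enlargement via the identity. The coproduct in the variety of groups is the free product, so $\mathbb{A}\oplus\mathbb{B}\cong\Z/2\Z\ast\Z/2\Z$ is the infinite dihedral group $\D$, which I will write as $\langle a,b\mid a^{2}=b^{2}=1\rangle$; putting $r=ab$ we have $\D=\langle r\rangle\rtimes\langle a\rangle$ with $\langle r\rangle\cong\Z$ infinite cyclic and $ara^{-1}=r^{-1}$. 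The claim I will prove is that $\D$ is not a retract of ${}^{*}\D$, which is exactly what the proposition asks for.

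Suppose, for contradiction, that $f:{}^{*}\D\to\D$ is a homomorphism with $f|_{\D}=\mathrm{id}_{\D}$. The first step is to see that $f$ must carry the internal cyclic group $\{r^{\nu}:\nu\in{}^{*}\Z\}$ into $\langle r\rangle$. A routine computation in $\Z\rtimes\Z/2\Z$ shows $C_{\D}(r)=\langle r\rangle$, and by transfer $C_{{}^{*}\D}(r)=\{r^{\nu}:\nu\in{}^{*}\Z\}$. For $\nu\in{}^{*}\Z$ the internal element $r^{\nu}$ commutes with $r$, so $f(r^{\nu})$ commutes with $f(r)=r$, hence $f(r^{\nu})\in\langle r\rangle$. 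Since $r^{\nu_{1}}r^{\nu_{2}}=r^{\nu_{1}+\nu_{2}}$ internally, the rule sending $\nu$ to the exponent of $f(r^{\nu})$ defines a group homomorphism $g:{}^{*}\Z\to\Z$, and $f(r^{n})=r^{n}$ for $n\in\Z$ gives $g|_{\Z}=\mathrm{id}_{\Z}$. So $g$ would be a retraction of ${}^{*}\Z$ onto $\Z$, and it remains to rule such a thing out.

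The second step is a short number-theoretic argument. Fix two distinct primes $p$ and $q$. For every $n\in\N$ the congruences $x\equiv0\pmod{p^{n}}$ and $x\equiv1\pmod{q}$ have a common solution in $\Z$ by the Chinese remainder theorem, so the relation $\{(n,m)\in\N\times\Z:p^{n}\mid m\text{ and }q\mid m-1\}$ is concurrent; by the concurrency principle (equivalently, by overspill) there is $\omega\in{}^{*}\Z$ with $p^{n}\mid\omega$ for every $n\in\N$ and $q\mid\omega-1$. Writing $\omega=p^{n}\xi_{n}$ with $\xi_{n}\in{}^{*}\Z$ and applying $g$ gives $g(\omega)=p^{n}g(\xi_{n})$, so $p^{n}\mid g(\omega)$ in $\Z$ for every $n\in\N$, forcing $g(\omega)=0$. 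On the other hand $\omega-1=q\zeta$ for some $\zeta\in{}^{*}\Z$ gives $g(\omega)-1=q\,g(\zeta)$, i.e.\ $q\mid g(\omega)-1=-1$, which is absurd. This contradiction shows there is no retraction ${}^{*}\D\to\D$, so $\mathbb{A}\oplus\mathbb{B}$ need not be a retract of its enlargement.

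The real content is concentrated in the last step: the element $\omega\in{}^{*}\Z$ that is simultaneously divisible by all finite powers of $p$ and congruent to $1$ modulo $q$ is precisely what obstructs $\Z$ from being a retract of ${}^{*}\Z$, and I expect this — together with the observation that $\langle r\rangle$ sits inside $\D$ as the transfer-stable centralizer of $r$, which is what allows $f$ to be restricted to it — to be the crux. The coproduct identification and the centralizer computation are routine.
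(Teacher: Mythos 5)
Your proof is correct, and it reaches the conclusion by a genuinely different route than the paper. The paper also takes the coproduct of two finite (two-element) groups, but it then passes to the direct sum of two singleton semigroups, viewed as a subreduct, and derives the contradiction from an unbounded-factorization (word-length) argument: for $x\neq a,b$ and infinite $\eta$, any retraction $r$ would satisfy $r(x^{\eta})=r(x^{\eta-n})x^{n}$ for every finite $n$, which no element of finite length can accommodate; this is essentially the free-semigroup argument the paper later records as the theorem that no free semigroup is a retract of its enlargement. You instead identify $\Z/2\Z \oplus \Z/2\Z$ concretely as the infinite dihedral group, use the centralizer of $r=ab$ together with transfer to force any retraction to restrict to a group retraction of ${}^{*}\Z$ onto the infinite cyclic subgroup $\langle r\rangle\cong\Z$, and then kill that with the two-prime divisibility argument (an $\omega$ divisible by all powers of $p$ yet congruent to $1$ modulo $q$, obtained by concurrency). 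Each approach has its merits: the paper's length argument is more elementary and feeds directly into its later, more general retraction results for free semigroups, while yours is self-contained, sidesteps any bookkeeping about word lengths and possible cancellation in the coproduct, and establishes along the way the independently useful fact that the additive group $\Z$ is not a retract of ${}^{*}\Z$ (a sharper statement than the paper's ring-of-integers example, since it cannot appeal to multiplication). All the individual steps you use — the computation $C_{\D}(r)=\langle r\rangle$, the transfer to the internal cyclic group, the well-definedness of $g$ and $g(p^{n}\xi_{n})=p^{n}g(\xi_{n})$ for standard $p^{n}$, and the concurrency application via the Chinese remainder theorem — check out.
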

\begin{proof}
Taking the direct sum of two finite groups in the variety of groups, which is infinite, shows us that the enlargement of the direct sum is not the direct sum of the enlargements. In the direct sum of two singleton semigroups, which is a subreduct of the direct sum of two groups, for $x \neq a,b, \eta \notin \N$, we see that $r(x^\eta)=r(x^{\eta-n})(x^n) \forall n\in \N$, producing a contradiction.
\end{proof}

\begin{proposition}\label{}
If $\mathbb{A}$ is a retract of $\widehat{\mathbb{A}}$ and $\mathbb{B}$ is a retract of $\widehat{\mathbb{B}}$, then $\mathbb{A}\times\mathbb{B}$ is a retract of its extension monad, as well.
\end{proposition}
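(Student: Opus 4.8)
The plan is to invoke Theorem~\ref{UrProdMonad} to identify $\widehat{\A\times\B}$ with the direct product $\hat\A\times\hat\B$, and then to assemble the desired retraction from the given ones componentwise. Since $\A$ and $\B$ must share a signature for $\A\times\B$ to be defined, they both lie in the variety of all algebras of that signature, so Theorem~\ref{UrProdMonad} applies and yields an isomorphism $\widehat{\A\times\B}\cong\hat\A\times\hat\B$ under which the product projections of $\hat\A\times\hat\B$ correspond to the restrictions ${}^*\rho_\A|_{\widehat{\A\times\B}}$ and ${}^*\rho_\B|_{\widehat{\A\times\B}}$, exactly as set up in the proof of that theorem.

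Next I would let $f\colon\hat\A\to\A$ and $g\colon\hat\B\to\B$ be the hypothesized retraction homomorphisms, so $f$ is the identity on $\A$ and $g$ is the identity on $\B$. Composing with the product projections $\rho_{\hat\A}\colon\hat\A\times\hat\B\to\hat\A$ and $\rho_{\hat\B}\colon\hat\A\times\hat\B\to\hat\B$ gives homomorphisms $f\circ\rho_{\hat\A}\colon\widehat{\A\times\B}\to\A$ and $g\circ\rho_{\hat\B}\colon\widehat{\A\times\B}\to\B$. By the universal property of the product $\A\times\B$ these factor uniquely through a homomorphism $h\colon\widehat{\A\times\B}\to\A\times\B$ with $\rho_\A\circ h=f\circ\rho_{\hat\A}$ and $\rho_\B\circ h=g\circ\rho_{\hat\B}$. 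Then I would check that $h$ restricts to the identity on $\A\times\B$: for $p\in\A\times\B\leq\widehat{\A\times\B}$, the projection $\rho_{\hat\A}$ agrees on $\A\times\B$ with $\rho_\A$ and already takes values in $\A$, so $\rho_\A(h(p))=f(\rho_\A(p))=\rho_\A(p)$ since $f$ fixes $\A$; likewise $\rho_\B(h(p))=\rho_\B(p)$. Because $(\rho_\A,\rho_\B)$ jointly separate the points of $\A\times\B$ (equivalently, by the uniqueness clause of the universal property applied to the identity map of $\A\times\B$), this forces $h(p)=p$, so $h$ is a retraction of $\widehat{\A\times\B}$ onto $\A\times\B$.

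The argument is essentially routine once Theorem~\ref{UrProdMonad} is available, so there is no substantive obstacle; the only points needing care are the bookkeeping identification of the projections of $\widehat{\A\times\B}$ with the restricted ${}^*$-projections (supplied by the proof of Theorem~\ref{UrProdMonad}) and the observation that $\rho_{\hat\A}$ restricted to the subalgebra $\A\times\B$ genuinely lands in $\A$, not merely in $\hat\A$, which is what lets $f$ act as the identity there. If one prefers to avoid the categorical machinery, one can instead present $\widehat{\A\times\B}$, $\hat\A\times\hat\B$, and $h=f\times g$ concretely on the urelement underlying sets guaranteed by the unnamed model-theoretic theorem preceding Theorem~\ref{UrProdMonad}, but the structure of the proof is unchanged.
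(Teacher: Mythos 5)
Your proposal is correct and follows the same route as the paper, which simply notes that by Theorem~\ref{UrProdMonad} the extension monad of the product is the product of the extension monads and then applies the given retractions entrywise; your $h = f\times g$ via the universal property is exactly that entrywise retraction, spelled out in more detail.
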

\begin{proof}
Since the extension monad of a product is the product of the extension monads, we can simply use an entrywise retract.
\end{proof}

\begin{proposition}\label{}
If $\mathbb{A}\leq\mathbb{C}$ is a retract of $\widehat{\mathbb{A}}$ and $\mathbb{B}\leq\mathbb{C}$ is a retract of $\widehat{\mathbb{B}}$, then $\mathbb{A}\cap\mathbb{B}$ is a retract of its extension monad, as well.
\end{proposition}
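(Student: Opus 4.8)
The plan is to first pin down the extension monad of $\A\cap\B$ in terms of $\widehat{\A}$ and $\widehat{\B}$, and then produce the required retraction simply by restricting one of the two retractions we are handed.

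First I would compute $\widehat{\A\cap\B}$. Applying the earlier proposition that $\widehat{\mathbb{X}}=\widehat{\mathbb{Y}}\cap{}^*\mathbb{X}$ whenever $\mathbb{X}\le\mathbb{Y}$ to the two inclusions $\A\cap\B\le\A$ and $\A\cap\B\le\B$, and using ${}^*(\A\cap\B)={}^*\A\cap{}^*\B$ (the identity already invoked in the companion proposition for enlargements) together with $\widehat{\A}\le{}^*\A$, I get $\widehat{\A\cap\B}=\widehat{\A}\cap{}^*(\A\cap\B)=\widehat{\A}\cap{}^*\A\cap{}^*\B=\widehat{\A}\cap{}^*\B$, and symmetrically $\widehat{\A\cap\B}=\widehat{\B}\cap{}^*\A$. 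Each of these exhibits $\widehat{\A\cap\B}$ simultaneously as a subalgebra of $\widehat{\A}$ and of $\widehat{\B}$, so in fact $\widehat{\A\cap\B}=\widehat{\A}\cap\widehat{\B}$. I would also record the elementary fact that the only standard elements of ${}^*\mathbb{X}$ are those of $\mathbb{X}$ itself, so that $\widehat{\A}\cap\C=\A$ and $\widehat{\B}\cap\C=\B$, hence $\widehat{\A}\cap\B=\A\cap\B=\widehat{\B}\cap\A$.

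Next, let $f:\widehat{\A}\to\A$ be the given retraction (a homomorphism that is the identity on $\A$). Since $\widehat{\A\cap\B}=\widehat{\A}\cap\widehat{\B}\le\widehat{\A}$, the restriction $f_0:=f|_{\widehat{\A\cap\B}}$ is a homomorphism into $\A$ that is the identity on $\A\cap\B$; all that is missing is that its image lies in $\A\cap\B$. Because $f_0$ already maps into $\A$ and $\widehat{\A}\cap\B=\A\cap\B$, it suffices to show $f_0$ maps $\widehat{\A\cap\B}$ into $\widehat{\B}$ — equivalently, that $f$ agrees on $\widehat{\A\cap\B}$ with the companion retraction $g:\widehat{\B}\to\B$. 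To attack this I would invoke Theorem~\ref{ExtensionMonadEq}: $\widehat{\A\cap\B}$ is generated by the internal subalgebras $\langle F\rangle^{(int)}$ with $F\subseteq\A\cap\B$ finite, each of which is the ${}^*$-transform of the standard finitely generated algebra $\langle F\rangle\le\A\cap\B$ and already sits inside $\widehat{\A}\cap\widehat{\B}$. Since $f$ is a homomorphism, it then suffices to prove $f[\langle F\rangle^{(int)}]\subseteq\widehat{\B}$ for each such $F$, which one would try to extract from the way the homomorphism $f$, which fixes the standard part $\langle F\rangle$, acts on the hyperfinitely generated internal algebra $\langle F\rangle^{(int)}$.

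I expect this last claim to be the real obstacle: nothing a priori forces an arbitrary retraction of $\widehat{\A}$ onto $\A$ to preserve the external subalgebra $\widehat{\B}$. When $\A$, $\B$, and therefore $\A\cap\B$ are locally finite the difficulty evaporates, since then $\widehat{\A\cap\B}=\A\cap\B$ by Theorem~\ref{LocalFiniteEquiv} and $f_0$ is the identity; in general one must exploit extra rigidity of retractions out of extension monads. Since the companion proposition for enlargements rests on exactly the same observation, the cleanest route would be to isolate it as a single lemma — ``a retraction of $\widehat{\mathbb{X}}$ onto $\mathbb{X}$ restricts to a retraction of $\widehat{\mathbb{X}}\cap\widehat{\mathbb{Y}}$ onto $\mathbb{X}\cap\mathbb{Y}$ whenever $\mathbb{X},\mathbb{Y}\le\C$'' — and prove both propositions at once, appealing if necessary to the stronger structural facts about such retractions developed later in the section.
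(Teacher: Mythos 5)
Your reduction is set up exactly the way the paper does it: the paper's entire proof is the one-line observation that ``the restriction of either retraction to the extension monad of the intersection is a retraction onto the intersection because it preserves operations and is the identity on the standard intersection.'' Your preliminary computation that $\widehat{\A\cap\B}=\widehat{\A}\cap{}^*\B=\widehat{\B}\cap{}^*\A=\widehat{\A}\cap\widehat{\B}$ (via the earlier proposition $\widehat{\A}=\widehat{\B}\cap{}^*\A$ for $\A\leq\B$) is correct and is actually more than the paper records; it cleanly justifies that the restriction $f|_{\widehat{\A\cap\B}}$ makes sense as a homomorphism out of $\widehat{\A\cap\B}$.

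However, your attempt stops short of a proof, and you say so yourself: you never establish that $f$ carries $\widehat{\A\cap\B}$ into $\B$ (equivalently into $\A\cap\B$, since the image already lies in $\A$), which is precisely what is needed for the restriction to be a retraction \emph{onto the intersection} rather than merely a homomorphism into $\A$ fixing $\A\cap\B$. Reducing to the finitely generated pieces $\left<F\right>^{(int)}$ with $F\subseteq A\cap B$ finite does not close this, because nothing in the hypotheses controls where $f$ sends the nonstandard elements of such an internal algebra; your locally finite special case is the only situation you actually settle. This missing step is exactly the point the paper's proof passes over with the phrase quoted above, so you have correctly located the crux, but a blind proof needs to supply it. Within the paper's own toolkit one can in fact discharge it: the later theorem that any algebra which is a retract of its extension monad is finite forces $\widehat{\A}=\A$, $\widehat{\B}=\B$ and hence $\widehat{\A\cap\B}=\A\cap\B$, so the identity map works and the situation degenerates to the case you already handled; alternatively one must prove the rigidity lemma you only conjecture at the end. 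As written, your proposal is an honest reduction with an acknowledged gap, not a complete argument.
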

\begin{proof}
The restriction of either retraction to the extension monad of the intersection is a retraction onto the intersection because it preserves operations and is the identity on the standard intersection.
\end{proof}

\begin{proposition}\label{}
If $\mathbb{A}$ is a retract of $\hat{\mathbb{A}}$ and $\mathbb{B}$ is a retract of $\hat{\mathbb{B}}$, then $\mathbb{A}\oplus\mathbb{B}$ is not necessarily a retract of its extension monad.
\end{proposition}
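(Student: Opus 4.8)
The statement is a non-implication, so the plan is to produce one variety together with algebras $\A,\B$ in it for which both hypotheses hold while the conclusion fails, in direct parallel with Proposition~\ref{EnlargementCounterEx}. The reduction that makes this easy: if $\A\oplus\B$ happens to be finitely generated, then Theorem~\ref{FinitelyGeneratedEquiv} gives $\widehat{\A\oplus\B}={}^*(\A\oplus\B)$, so being a retract of the extension monad is the same as being a retract of the enlargement. Hence it suffices to find $\A$ and $\B$, each a retract of its own extension monad, whose coproduct is infinite and finitely generated but fails to be a retract of its enlargement.

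I would work in the variety of semigroups and take $\A=\{a\}$ and $\B=\{b\}$ to be the one-element semigroups. Each is finite, hence finitely generated, so Theorem~\ref{FinitelyGeneratedEquiv} together with the fact that the ${}^*$-transform of a finite set is that set gives $\hat{\A}={}^*\A=\A$, and likewise $\hat{\B}=\B$; in particular each is trivially a retract of its extension monad via the identity. Their coproduct $\P=\A\oplus\B$ is the free product: its elements are the nonempty reduced alternating words in the letters $a,b$, multiplied by concatenating and then merging the two letters at the junction when they coincide. Since $\{a,b\}$ generates $\P$ it is finitely generated, so $\widehat{\P}={}^*\P$ by Theorem~\ref{FinitelyGeneratedEquiv}, and $\P$ is plainly infinite.

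The core of the argument is to show $\P$ is not a retract of ${}^*\P$. Suppose $r\colon{}^*\P\to\P$ were a retraction, and set $x=ab\in\P$; for each standard $n$, $x^n$ is the reduced word of length $2n$ and $r(x^n)=x^n$, since $r$ is the identity on $\P$. Fix $\eta\in{}^*\N\setminus\N$. For every standard $n$ we have the internal identity $x^\eta=x^{\eta-n}\cdot x^n$ in ${}^*\P$, hence $r(x^\eta)=r(x^{\eta-n})\cdot x^n$ in $\P$. But right-multiplying any nonempty reduced word by the length-$2n$ word $x^n$ yields a reduced word of length at least $2n$ --- concatenation deletes at most one letter, at the single junction, and $r(x^{\eta-n})$ is nonempty --- so the one fixed element $r(x^\eta)$ of $\P$ would have length at least $2n$ for every $n\in\N$, which is absurd. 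Thus there is no such $r$, and $\P=\A\oplus\B$ is the desired counterexample.

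The only genuine work here is isolating the normal form for the semigroup coproduct and the ``length drops by at most one under concatenation'' estimate; after that the contradiction from $r(x^\eta)=r(x^{\eta-n})x^n$ is immediate, so I expect no serious obstacle. One should note explicitly that the claim concerns the extension monad and not merely the enlargement, but this is dissolved at the outset by the finitely-generated reduction $\widehat{\P}={}^*\P$, which also exhibits the example as literally an instance of Proposition~\ref{EnlargementCounterEx}. A purely group-theoretic counterexample is available too --- $\A=\B$ a two-element group, so that $\A\oplus\B$ is the infinite dihedral group --- but ruling out a retraction there needs either a separate lemma that $\Z$ is not a retract of ${}^*\Z$ or a short case split on rotations versus reflections, so the semigroup construction above is the cleaner route.
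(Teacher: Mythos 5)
Your proposal is correct and follows essentially the same route as the paper: the paper's proof simply notes that the counterexample from Proposition~\ref{EnlargementCounterEx} (the coproduct of two singleton semigroups) carries over because the summands are finite, which is exactly your reduction via $\widehat{\A\oplus\B}={}^*(\A\oplus\B)$ for the finitely generated coproduct. Your write-up just fills in the normal-form and length details that the paper leaves implicit in its $r(x^\eta)=r(x^{\eta-n})x^n$ contradiction.
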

\begin{proof}
The example for the enlargements in \ref{EnlargementCounterEx} applies to extension monads here, as well, because the example summands were also finite in that example.
\end{proof}

Now we look into specific examples in which certain algebras are retracts of their enlargements and extension monads, particularly integers under different collections of operations.

\begin{example}\label{}
The extension monad of ring $\mathbb{Z}$, the integers, is a retract of ${}^*\mathbb{Z}$. Because the ring of integers is finitely generated, $\hat{\Z} = {}^*\Z$, and thus, we have a trivial retract.
\end{example}

\begin{example}\label{} 
The extension monad of the (additive, multiplicative) semigroup $\mathbb{Z}$ of integers is a retract of ${}^*\mathbb{Z}$. Due to finite generation of the additive multiplicative semigroup of integers, we get a trivial retract.
\end{example}

\begin{example}\label{} 
The extension monad of the lattice $\mathbb{N}$ of nonnegative integers (under the usual meet-and-join operations) is not a retract of ${}^*\mathbb{N}$. Because the lattice of nonnegative integers is locally finite, we have $\hat{\N} - \N$. Taking any infinite element $\psi \in {}^*\N$, we see that $\forall n \in \N$, we have $n \vee \psi = \psi$ and $n \wedge \psi = n$. However, to be consistent with the definition of a homomorphism, we need $f(\psi) = n_\psi \in \N$ to be such that $\forall n \in \N$ we have $n_\psi \vee n = n_\psi$ and $n_\psi \wedge n = n$, producing a contradiction.
\end{example}

\begin{example}\label{}
The extension monad of the lattice $\mathbb{N}$ of nonnegative integers (with meet and join based upon the divisibility ordering) is not a retract of ${}^*\mathbb{N}$. This structure is also locally finite, so $\hat{\N} = \N$. Similar to the previous example, taking some infinite $\psi \in {}^*\N$, we have that $f(2^\psi) > 2^n$ $\forall n \in \N$, giving us a contradiction.
\end{example}

\begin{example}\label{} 
The ring of integers is not a retract of its extension monad. Due to finite generation, we have that $\hat{\Z} = {}^*\Z$. Taking any infinite element $\eta$ gives us that  $\forall n\in \N$ we have $0 \neq r(\eta-n) = r(\eta) - n$, so $r(\eta) \neq n$, again producing a contradiction.
\end{example}

\begin{example}\label{}
The semigroup of integers is not a retract of its extension monad. Due to finite generation, we have that $\hat{\Z} = {}^*\Z$. At which point, this follows the same logic as the previous example.
\end{example}

\begin{example}\label{}
The lattice of nonnegative integers (with meet and join based on the usual ordering) is a retract of its extension monad. Due to local finiteness, we have a trivial retract.
\end{example}

\begin{example}\label{}
The lattice of nonnegative integers (with meet and join under the divisibility ordering) is a retract of its extension monad. Due to local finiteness, we have a trivial retract.
\end{example}
\newline

Given the change in the ordered set of operations in a superstructure enlargement, algebras are only directly comparable to subalgebras of their enlargement if they have a finite collection of finitary operations. Therefore, for any homomorphism to exist from the enlargement to the original algebra, the collection of algebras must be finite and consist only of finitary operations.

\begin{proposition}
    If $\A$ is a retract of $\hat{\A}$, then $\mathcal{O}_{\A}$ is a finite collection of finitary operations.
\end{proposition}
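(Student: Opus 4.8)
The plan is to show that the bare existence of a retraction already forces $\A$ and $\hat\A$ to be algebras of the \emph{same type}, and then to read off the constraints on $\mathcal{O}_\A$ from the behaviour of the $*$-transform on types described in Section~1. First I would unwind the definition of a retract: if $\A$ is a retract of $\hat\A$, then by definition $\A$ is a \emph{subalgebra} of $\hat\A$ and there is a homomorphism $f\colon\hat\A\to\A$ that is the identity on $A$. Both ``$\A$ is a subalgebra of $\hat\A$'' and ``$f$ is a homomorphism $\hat\A\to\A$'' presuppose that $\A$ and $\hat\A$ carry the same signature; so the content of the first step is simply to record that $\A$ and $\hat\A$ must be of the same type.

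The second step is to identify the type of $\hat\A$. Since $\hat\A\le{}^*\A$ and ${}^*\A=({}^*A,{}^*\mathcal{O})$ carries the type ${}^*\mathcal{F}$ (where $\mathcal{F}$ is the type of $\A$), the subalgebra $\hat\A$ carries that same type ${}^*\mathcal{F}$. Combining with the first step, $\mathcal{F}={}^*\mathcal{F}$; that is, the indexed family of operation symbols of $\A$ together with their arities is left literally unchanged by the $*$-transform.

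The third step is to apply the two obstructions to $\mathcal{F}={}^*\mathcal{F}$ identified in Section~1. If $\mathcal{O}_\A$ were infinite, then the indexing set of ${}^*\mathcal{F}$ would properly extend that of $\mathcal{F}$ — it would contain operation symbols sitting at nonstandard indices — contradicting $\mathcal{F}={}^*\mathcal{F}$; hence $\mathcal{O}_\A$ is finite. If some $o\in\mathcal{O}_\A$ had infinite arity $\lambda$, then the pair for $o$ in ${}^*\mathcal{F}$ would carry the nonstandard arity ${}^*\lambda\ne\lambda$, again contradicting $\mathcal{F}={}^*\mathcal{F}$; hence every operation in $\mathcal{O}_\A$ is finitary. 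This is the conclusion. It is worth remarking that the map $f$ is never actually used beyond forcing a common type — it is the stronger comparability requirement ``$\A\le\hat\A$'' built into the definition of retract that does the work — and that, by the reverse reading of the same considerations (recall $|{}^*\mathcal{O}|=|\mathcal{O}|$ when $\mathcal{O}$ is finite, and finite arities are fixed by $*$), a finite collection of finitary operations is exactly the signature-level condition under which $\A$ is comparable to $\hat\A$ at all.

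The step I expect to require the most care is the first one: arguing that the definition of retract genuinely demands equality of types, not merely that $\A$ be a standard reduct of a subalgebra of $\hat\A$. The tension is that $\A$ is \emph{always} a subreduct of $\hat\A$, so the argument must lean on the fact that the strictly stronger relation ``$\A$ is a subalgebra of $\hat\A$'' becomes impossible the moment the type is enlarged — precisely the nonstandard-index and bumped-arity phenomena above. (This also indicates why the corresponding statement would fail for retractions of the standard reduct $\hat\A|_{\mathcal{O}}$: for a locally finite $\A$ with infinitely many finitary operations, Theorem~\ref{LocalFiniteEquiv} gives $\hat\A|_{\mathcal{O}}=\A$, so such a retraction exists trivially.)
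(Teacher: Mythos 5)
Your proposal is correct and follows essentially the same route as the paper, which declares the proof trivial on the strength of the immediately preceding observation that an algebra is directly comparable to subalgebras of its enlargement (and hence can admit a retraction from $\hat{\A}$) only when its signature is a finite collection of finitary operations. You have simply spelled out the signature-comparison argument ($\mathcal{F}={}^*\mathcal{F}$ fails under infinitely many operations or infinitary arities) that the paper leaves implicit.
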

\begin{proof}
    The proof of this is trivial.
\end{proof}

The next several propositions assume that some retraction exists between an enlargement or extension monad and the original algebra, and then we examine the properties of different restrictions on those retractions.

\begin{proposition}\label{Part1}
Let $F$ be a finite subset of ${}^*A$ and assume that $\mathbb{A}$ is a retract of $\left<A\cup F\right>^{(int)}$ via retraction $r:\left<A\cup F\right>^{(int)}\twoheadrightarrow\mathbb{A}$. Then, the restriction of $r$ to $\left<F\right>^{(int)}$ must be a retraction onto its range.
\end{proposition}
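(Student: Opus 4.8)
The plan is to verify, for $h:=r|_{\left<F\right>^{(int)}}$, the two requirements in the paper's definition of a retraction: that its range is a subalgebra of its domain, and that $h$ is the identity there. Most of this is formal. Since $F\subseteq A\cup F$ and $\left<A\cup F\right>^{(int)}$ is closed under the operations, $\left<F\right>^{(int)}$ is a subalgebra of $\left<A\cup F\right>^{(int)}$, and a restriction of a homomorphism to a subalgebra is a homomorphism; thus $h$ is a homomorphism and its range $R:=r[\left<F\right>^{(int)}]$ is a subalgebra of $\A$. Because $r$ is a retraction of $\left<A\cup F\right>^{(int)}$ onto $\A$ its image is $A$ and it fixes $A$ pointwise, so $R\subseteq A$, the map $h$ is the identity on $R$, and $r\circ r=r$. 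Hence if "retraction onto its range" is read only as "a surjective homomorphism onto a subalgebra of $\A$", we are already finished; the genuine content is that $R$ is a subalgebra of $\left<F\right>^{(int)}$ itself, equivalently that $r$ maps $\left<F\right>^{(int)}$ into itself.

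To establish $r[\left<F\right>^{(int)}]\subseteq\left<F\right>^{(int)}$, enumerate $F=\{f_1,\dots,f_k\}$ and transfer the usual description of a finitely generated subalgebra: every $z\in\left<F\right>^{(int)}$ is the value $t(f_1,\dots,f_k)$ of an internal $k$-ary term $t$. When $t$ is a standard term, an external induction on its finite build-up gives $r(t(f_1,\dots,f_k))=t^{\A}(r(f_1),\dots,r(f_k))$, the value of a standard term at the $r(f_i)$. For a general $z$ the representing term has nonstandard length, and one cannot simply push $r$ through it: an external homomorphism of internal algebras need not commute with internal terms of infinite length. I would instead pass to the congruence $\theta$ induced by $r$ on $\left<A\cup F\right>^{(int)}$. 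Since $\A$ is a retract, each $\theta$-class meets $A$ in exactly one point, namely the common $r$-value of that class; so $r(z)$ is the unique standard member of the $\theta$-class of $z$, and — using that $\left<F\right>^{(int)}$ is internal, indeed $\check F$ (cf. Lemma~\ref{FiniteCheckEq}), and finitely generated by $F$ — one argues that this standard point cannot lie outside $\left<F\right>^{(int)}$. Granted the containment, $h$ is an idempotent homomorphism of $\left<F\right>^{(int)}$ onto the subalgebra $R\leq\left<F\right>^{(int)}$ fixing $R$ pointwise, which is exactly a retraction onto its range.

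The step I expect to be the real obstacle is this containment $r[\left<F\right>^{(int)}]\subseteq\left<F\right>^{(int)}$ in the nonstandard-length case. It is not automatic, for the reason just noted, so the stability of $\left<F\right>^{(int)}$ under $r$ must be drawn out of the retraction structure itself — the idempotence of $r$ together with the shape of the congruence it induces — and from the internality and finite generation of $\left<F\right>^{(int)}$, rather than from a direct computation with terms.
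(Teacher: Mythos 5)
Your first paragraph is, in substance, the paper's entire proof: the paper argues in one line that the restriction of $r$ to the subalgebra $\left<F\right>^{(int)}$ maps onto its range, and that since $r$ is the identity on its range $A$, the restriction is the identity on its own range as well. Under that (loose) reading of ``retraction onto its range'' you are finished, by essentially the same route as the paper, and everything after your first paragraph is extra.

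The remainder of your proposal addresses the stricter reading that the paper's own definition of retract would demand, namely that $R=r[\left<F\right>^{(int)}]$ be a subalgebra of the domain $\left<F\right>^{(int)}$ itself, and there your argument has a genuine gap which you yourself flag but do not close: the congruence observation only identifies $r(z)$ as the unique standard element of the $\theta$-class of $z$, and neither internality nor finite generation of $\left<F\right>^{(int)}$ prevents that standard element from lying outside $\left<F\right>^{(int)}$. Indeed $\left<F\right>^{(int)}$ can be disjoint from $A$ altogether; for the lattice ${}^*\N$ with $F=\{\psi\}$, $\psi$ infinite, one has $\left<F\right>^{(int)}=\{\psi\}$, so no argument that works only with the shape of the congruence and the internality of $\left<F\right>^{(int)}$ can yield the containment. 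The reason the strict statement does not fail outright is that the hypothesis is extremely restrictive: since $\hat{\A}\leq\left<A\cup F\right>^{(int)}$, a retraction onto $\A$ restricts to a retraction of $\hat{\A}$ onto $\A$, and the paper's later saturation theorem then forces $A$ to be finite, whence ${}^*A=A$, $F\subseteq A$, and $r$ fixes $\left<F\right>^{(int)}=\left<F\right>$ pointwise, making the containment trivial. That route is external to both your sketch and the paper's proof (which never engages with the containment at all), so the part you correctly single out as ``the genuine content'' remains unproved in your proposal as written.
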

\begin{proof}
Because the restriction of the retraction maps onto its range and the entire function is the identity with regards to the range, a restriction will have the same property.
\end{proof}

\begin{proposition}\label{}
Let $r:\widehat{\mathbb{A}}\twoheadrightarrow\mathbb{A}$ be a retraction and let $H\subseteq\widehat{A}$ be a hyperfinite set. The restriction of $r$ to $H$ does not need to be an internal mapping.
\end{proposition}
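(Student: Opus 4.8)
The statement is a non-example, so the plan is to exhibit a concrete algebra $\A$, a retraction $r : \hat{\A} \twoheadrightarrow \A$, and a hyperfinite $H \subseteq \hat{A}$ for which $r|_H$ fails to be internal. By the preceding proposition any candidate $\A$ must have only finitely many finitary operations, and one should note that the trivial retract of a locally finite algebra is useless here, since then $\hat{A} = A$ is (in the interesting cases) external and its only hyperfinite subsets are finite, making $r|_H$ automatically internal. So I would take the simplest \emph{non}-locally-finite algebra that still admits a retraction from its extension monad: the commutative semigroup $\A = (\N \cup \{\infty\}, +)$ in which $\infty$ is absorbing, i.e.\ $\infty + x = x + \infty = \infty$ for every $x$. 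This $\A$ is generated by $\{1,\infty\}$, hence finitely generated, so by Theorem~\ref{FinitelyGeneratedEquiv} we have $\hat{\A} = {}^*\A = ({}^*\N \cup \{\infty\}, {}^*+)$, where by transfer $\infty$ remains absorbing for ${}^*+$ and ${}^*\N$ is closed under ${}^*+$; note also $\hat{\A} \neq \A$ since, e.g., $\langle\{2\}\rangle$ is already infinite, so $\A$ is genuinely not locally finite.

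Next I would define $r : \hat{\A} \to \A$ by $r(n) = n$ for $n \in \N$, $r(\nu) = \infty$ for $\nu \in {}^*\N \setminus \N$, and $r(\infty) = \infty$, and verify it is a retraction. It is visibly the identity on $\A = \N \cup \{\infty\}$, so the only content is the homomorphism property, established by a short case analysis: a sum of two standard naturals is standard, where $r$ is the identity; while a sum having at least one summand that is an infinite hypernatural or is $\infty$ again lies outside $\N$ (or equals $\infty$), and since $\infty$ is absorbing in $\A$ both sides of the homomorphism equation then evaluate to $\infty$. Thus $r$ is a genuine retraction $\hat{\A} \twoheadrightarrow \A$.

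Then I would fix an infinite $\nu \in {}^*\N \setminus \N$ and set $H = \{k \in {}^*\N : 0 \le k \le \nu\}$, a bounded initial segment of ${}^*\N$, hence an internal hyperfinite set, with $H \subseteq {}^*\N \subseteq \hat{A}$. Finally I would argue $r|_H$ is not internal: the set $\{k \in H : r|_H(k) = \infty\}$ is exactly $H \setminus \N$, so were $r|_H$ internal this set would be internal, and then $\N = H \cap \N = H \setminus (H \setminus \N)$ would be internal, contradicting the externality of $\N$ noted in the introduction. Hence $r|_H$ is not an internal mapping, which is what we wanted.

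The only real obstacle is the homomorphism check for $r$, and within that the interaction of the adjoined point $\infty$ with the transferred operation ${}^*+$ (closure of ${}^*\N$ under ${}^*+$, and absorption of every element by $\infty$); both are handled by transferring the corresponding standard facts about $\A$. Everything else is bookkeeping, together with the standard observation that deleting the external set $\N$ from an internal set cannot leave an internal set unless $\N$ were itself internal.
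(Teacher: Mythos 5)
Your proposal is correct, and it takes a genuinely different route from the paper's own proof. The paper argues with $\A$ taken to be the natural numbers as a ring: for any hyperfinite $H$ containing $\N$, the range of $r|_H$ is the external set $\N$, so $r|_H$ cannot be internal. That argument, however, is purely conditional: it never exhibits a retraction $r\colon\hat{\A}\twoheadrightarrow\A$ for that choice of $\A$, and by the paper's own later material (the example showing the ring of integers is not a retract of its extension monad, and the closing theorem asserting that any algebra which is a retract of its extension monad or enlargement is finite) no such retraction exists, so the paper's proof establishes the proposition only vacuously. You instead produce a live witness: the absorbing-point semigroup $\A=(\N\cup\{\infty\},+)$ is finitely generated (with the convention that $0\in\N$ you should take $\{0,1,\infty\}$ as generators, which changes nothing), hence $\hat{\A}={}^*\A$ by Theorem \ref{FinitelyGeneratedEquiv}; your standard-part style map $r$ is easily checked to be a homomorphism fixing $A$, using transfer for closure of ${}^*\N$ under ${}^*+$ and absorption by $\infty$; and your internality argument is sound, since an internal $r|_H$ would make $H\setminus\N=\{k\in H \mid r(k)=\infty\}$ internal and hence $\N=H\setminus(H\setminus\N)$ internal, contradicting externality of $\N$. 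What your approach buys is a non-vacuous proof, which is what a statement of the form ``does not need to be internal'' really requires. Be aware, though, of a consequence you should flag explicitly: your $\A$ is an infinite algebra that is a retract of its enlargement and of its extension monad, so your example directly contradicts the paper's final theorem; since your homomorphism check is elementary and correct, the tension points to a gap in that theorem's saturation argument (the internal partial homomorphism produced there need not have range exactly $A$), not to a flaw in your construction.
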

\begin{proof}
Let $\A$ be the natural numbers as a ring, $r:\widehat{\mathbb{A}}\twoheadrightarrow\mathbb{A}$ be a retraction, and $H\subseteq\widehat{A}$ be any hyperfinite set that contains $\A$. Since the ranges are not internal, $r$ and $r|_H$ are not internal functions.
\end{proof}

\begin{proposition}\label{}
Let $F$ be a hyperfinite subset of ${}^*A$ and assume that $\mathbb{A}$ is a retract of $\left<A\cup F\right>^{(int)}$ via retraction $r:\left<A\cup F\right>^{(int)}\twoheadrightarrow\mathbb{A}$. The restriction of $r$ to $\left<F\right>^{(int)}$ is a retraction onto its range.
\end{proposition}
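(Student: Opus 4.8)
The plan is to reuse, essentially verbatim, the argument behind Proposition~\ref{Part1}: the only difference between that proposition and this one is that the auxiliary set $F$ is hyperfinite rather than finite, and the content of the proof is precisely that hyperfinite subsets of ${}^*A$ behave like finite ones for every structural fact we invoke. In particular, a hyperfinite subset of ${}^*A$ is internal (this is the same observation used in the proof of Lemma~\ref{FiniteCheckEq} for finite sets), so $\langle F\rangle^{(int)}=\check F$ is a bona fide internal subalgebra of ${}^*\A$. Moreover, since $F\subseteq A\cup F$, monotonicity of the internal-closure operation gives $\langle F\rangle^{(int)}\le\langle A\cup F\rangle^{(int)}$, so it is legitimate to speak of the restriction of $r$ to $\langle F\rangle^{(int)}$.

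First I would form $r|_{\langle F\rangle^{(int)}}$. The restriction of a homomorphism to a subalgebra of its domain is again a homomorphism, so $r|_{\langle F\rangle^{(int)}}\colon\langle F\rangle^{(int)}\to\A$ is a homomorphism; let $R$ denote its image, which is a subalgebra of $\A$. Since $r$ is the identity on all of $\A$ and $R\subseteq\A$, the map $r$ fixes every element of $R$. Hence $r|_{\langle F\rangle^{(int)}}$ is a surjective homomorphism onto $R$ that is the identity on $R$, i.e.\ a retraction onto its range $R$, which is exactly what is claimed.

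I do not anticipate a genuine obstacle; the only points needing a word of care are (i) that hyperfiniteness is not secretly costing us internality of $\langle F\rangle^{(int)}$, which it is not, exactly as in the finite case of Lemma~\ref{FiniteCheckEq}, and (ii) being precise about the phrase ``retraction onto its range'': it should be read as saying that $r|_{\langle F\rangle^{(int)}}$ is a surjection onto $R=r[\langle F\rangle^{(int)}]$ which is the identity on $R$, with the identity claim coming from the fact that $r$ already fixes $\A\supseteq R$ inside its ambient domain $\langle A\cup F\rangle^{(int)}$. One should also keep in mind that $\langle A\cup F\rangle^{(int)}$ is itself generally external (since $A$ is), but this plays no role here: $r$ is handed to us as a homomorphism on it, and we only ever restrict. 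With these remarks the argument is identical to the one for Proposition~\ref{Part1}, transported to the hyperfinite setting.
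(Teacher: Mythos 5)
Your argument is correct and is essentially the paper's own: the paper simply notes that the hyperfinite case "follows the same logic" as the finite case (Proposition~\ref{prp:Part1}), namely that the restriction of $r$ is a homomorphism onto its range and $r$ already fixes that range because it lies in $A$. Your added remarks on internality of $\left<F\right>^{(int)}$ and on how to read "retraction onto its range" only make explicit what the paper leaves implicit.
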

\begin{proof}
This follows the same logic as \ref{Part1}.
\end{proof}

\begin{proposition}
If any of the following are true, then $\A$ is a retract of $\hat{\A}$:
\begin{itemize}
    \item $\A = \hat{\A}$, so $\A$ is locally finite
    \item There exists an internal subalgebra $\S \leq {}^*\A$ such that $\A$ is a retract of $\S$.
\end{itemize}
\end{proposition}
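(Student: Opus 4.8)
The plan is to treat the two bullet conditions separately; in each case the retraction onto $\A$ falls out once the right subalgebra inclusion is in hand.

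For the first condition, if $\A = \hat{\A}$ then the identity map $\hat{\A} \to \A$ is tautologically a homomorphism fixing every element of $A$, hence a retraction of $\hat{\A}$ onto $\A$ --- the ``trivial retract'' already invoked in the examples above. (That $\A$ is then locally finite is immediate from Theorem~\ref{LocalFiniteEquiv} applied with $\hat{\A}|_{\mathcal{O}} = \A$; note also that $\mathcal{O}_{\A}$ is necessarily a finite collection of finitary operations, since that is required for any retract of $\hat{\A}$ to exist.)

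For the second condition, suppose $\S \leq {}^*\A$ is internal and $r \colon \S \twoheadrightarrow \A$ is a retraction, so that $r$ is a homomorphism with range $A$ and $r|_{A} = \mathrm{id}_{A}$; in particular $\A \leq \S$. The key step is the inclusion $\hat{\A} \leq \S$: since $\S$ is an internal subalgebra of ${}^*\A$ having $\A$ as a subreduct, $\S$ is one of the algebras occurring in the intersection $\bigcap\{\B \leq {}^*\A \mid \B \text{ internal},\ \A \text{ a subreduct of } \B\}$ that defines $\hat{\A}$, so $\hat{\A} \leq \S$. Now restrict: $r|_{\hat{\A}} \colon \hat{\A} \to \A$ is a homomorphism (a restriction of a homomorphism to the subalgebra $\hat{\A}$ of $\S$), it maps into $A$ because $r$ does, and it fixes $A$ pointwise because $A \subseteq \hat{\A}$ and $r$ already does so; thus its range is exactly $A$, and $r|_{\hat{\A}}$ is a retraction of $\hat{\A}$ onto $\A$.

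The argument has no real obstacle. The only point needing a moment's care is the inclusion $\hat{\A} \leq \S$, and this is just the definition of the extension monad as an intersection over internal superalgebras of $\A$, combined with the observation that a retraction onto $\A$ forces $\A$ to sit inside its domain as a subalgebra, hence as a subreduct. Everything else is routine bookkeeping about restricting homomorphisms to subalgebras.
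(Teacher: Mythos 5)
Your proof is correct and follows the same route as the paper's: the first case is the trivial (identity) retraction, and the second is the restriction of the retraction $r\colon \S \twoheadrightarrow \A$ to $\hat{\A}$, justified by the inclusion $\hat{\A} \leq \S$ coming from the definition of the extension monad as an intersection over internal algebras having $\A$ as a subreduct. You simply spell out the details that the paper's one-line proof leaves implicit.
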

\begin{proof}
    The first statement is true due to a trivial retraction, while the second is true due to a restriction of the retract of the internal algebra.
\end{proof}

The following theorem is not only an interesting example of how hyperfinite iterations can break retractions, but it also gives us an easy example of algebras that are retracts of their enlargements but whose direct sums do not have the same property.

\begin{theorem}\label{NoFreeSemiGroup}
    No free semigroup is a retract of its enlargement.
\end{theorem}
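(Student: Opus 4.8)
The plan is to reduce everything to a single generator and run the ``hyperfinite power'' argument that already appears in Proposition~\ref{EnlargementCounterEx} and in the integer examples. Let $\Sigma$ be the free semigroup on a non-empty set $X$ of generators (the free semigroup on $\emptyset$ is empty and is vacuously its own retract, so we exclude that degenerate case), and fix one generator $a \in X$. Recall that $\Sigma$ consists of the finite non-empty words over $X$ under concatenation, and that the length map $|\cdot| : \Sigma \to (\Z^+,+)$ is a surjective semigroup homomorphism. Suppose, toward a contradiction, that $r : {}^*\Sigma \twoheadrightarrow \Sigma$ is a retraction, i.e.\ a homomorphism fixing $\Sigma$ pointwise.

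First I would transfer the relevant facts about powers of $a$. For every $n \in \Z^+$ the element $a^n$ (the $n$-fold product of $a$ with itself) lies in $\Sigma$, and $a^{m+n} = a^m \cdot a^n$; by transfer, for every $\eta \in {}^*\Z^+$ the internal word $a^\eta \in {}^*\Sigma$ is well defined, and $a^\eta = a^{\eta - n}\cdot a^n$ for every finite $n \in \N$ (here $\eta - n \in {}^*\Z^+$ whenever $\eta$ is infinite), and $|{}^*(\cdot)|$ extends $|\cdot|$ with $|a^\eta| = \eta$. Now pick any infinite $\eta \in {}^*\Z^+$ and set $w = r(a^\eta) \in \Sigma$, a word of some finite length $\ell = |w| \in \Z^+$. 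Applying $r$ to the identity $a^\eta = a^{\eta - n}\cdot a^n$ and using that $r$ is a homomorphism that fixes $a^n \in \Sigma$, we get $w = r(a^{\eta - n})\cdot a^n$ for every finite $n$. Taking lengths, $\ell = |r(a^{\eta - n})| + n \geq 1 + n$, since $r(a^{\eta - n}) \in \Sigma$ is a non-empty word. As this holds for all $n \in \N$, choosing $n = \ell$ yields $\ell \geq \ell + 1$, a contradiction; hence no such $r$ exists.

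The only point that needs care --- the ``main obstacle'', such as it is --- is making the transfer step precise: that ${}^*\Sigma$ genuinely contains $a^\eta$ as an internal element and that the finite decompositions $a^\eta = a^{\eta-n}a^n$ are legitimate. This is handled exactly as for ${}^*\Z$ in the integer examples: one realises ``$a^\eta$'' as the value of ${}^*f$ at $\eta$, where $f \colon \Z^+ \to \Sigma$, $f(n) = a^n$ is defined by primitive recursion, so that the needed equations are just transfers of $f(n+1) = f(n)\cdot a$ and $f(m+n) = f(m)\cdot f(n)$. Once that is in place the contradiction is immediate. I would also remark that the argument only ever uses the internal subalgebra $\langle\{a\}\rangle^{(int)} \leq {}^*\Sigma$, which is contained in $\hat{\Sigma}$; hence the same proof shows that no free semigroup is a retract of its extension monad either, and in particular this provides the example promised before Proposition~\ref{EnlargementCounterEx}.
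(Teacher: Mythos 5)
Your proof is correct, and it shares the paper's central move --- take $a^\eta$ for infinite $\eta$, write $a^\eta = a^{\eta-n}\cdot a^n$, and apply the retraction $r$ using that $r$ fixes $a^n$ --- but it finishes by a genuinely different mechanism. The paper stays inside the free semigroup: it argues that $r(g_j^\eta)$, being a finite word, must in fact be a power of $g_j$ (matching generators letter by letter), and then invokes the cancellative property of free semigroups together with the indecomposability of generators to reach the contradiction $g_j \ast r(g_j^{\eta-n}) = g_j$. You instead compose with the length homomorphism $|\cdot|\colon\Sigma\to(\Z^+,+)$ and get the purely numerical contradiction $\ell \geq n+1$ for all finite $n$; this bypasses both the cancellation citation and the generator-matching step, since the identity $w = r(a^{\eta-n})\cdot a^n$ involves only standard elements and the standard length map. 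Your approach is shorter and more elementary, and your closing remark is a genuine bonus: since $a^\eta$ already lies in $\left<\{a\}\right>^{(int)} \leq \hat{\Sigma}$, the same argument rules out a retraction from the extension monad even when the free semigroup is infinitely generated, whereas the paper's corollary only reaches $\widehat{\A\oplus\B}$ through the finitely generated case where $\hat{\cdot}$ and ${}^*\cdot$ coincide. The transfer step you flag as the main obstacle is handled correctly (transferring $f(m+n)=f(m)f(n)$ for $f(n)=a^n$), and excluding the empty generating set matches the paper's implicit assumption in fixing a generator $g_j$.
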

\begin{proof}
    Given a free semigroup $\S$ with binary operation $*$ freely generated by a set of elements $\{g_i|i \in I\}$, then for some $\eta \in {}^*\N \setminus \N$ and fixed $j \in I$, we have $g_j^\eta \in {}^*\S$. We also know that, since $\S$ is free, there are no two elements $x,y \in S$ such that $x * y = g_i$ for any $i \in I$. Now, given retraction $r: {}^*\S \rightarrow \S$, we can see by associativity that there exists a finite sequence of generators, $a_1,a_2,...,a_n$, such that $r(g_j^\eta) = a_1 * a_2 * ... *a_n$. Furthermore, $\forall n\in \N$, we have $r(g_j^\eta) = r(g_j^n)*r(g_j^{\eta-n}) = g_j^n*r(g_j^{\eta-n})$. Therefore, because $g_j*r(g_j^{\eta-1}) = a_1 * a_2 * ... *a_n$ and because $g_j$ and $a_1$ are generators, we have $a_1 = g_j$. The same is true for each $a_k$ for $1 \leq k \leq n$. Hence, we can say that $r(g_j^\eta) = g_j^n$. However, we also know that $g_j^{n-1}*g_j*r(g_j^{\eta-n}) = g_j^{n-1} * g_j$, so by the cancellative property of free semigroups \cite[p.367, Chap IX, 5.6 ($\beta$)]{LyapinFreeCancellative}, we conclude that $g_j*r(g_j^{\eta-n}) = g_j$, giving us a contradiction.
\end{proof}

\begin{corollary}
    It is not necessarily the case that $\A\oplus\B$ is a retract of ${}^*(\A\oplus\B)$ or $\widehat{\A\oplus\B}$.
\end{corollary}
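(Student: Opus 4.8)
The plan is to read this as an immediate consequence of Theorem~\ref{NoFreeSemiGroup} together with the observation that a direct sum of free semigroups is again free. Concretely, let $\A$ be the free semigroup on a singleton $\{g\}$ and $\B$ the free semigroup on a disjoint singleton $\{h\}$. In the category of semigroups the direct sum $\A\oplus\B$ is the coproduct, and the free-semigroup functor is a left adjoint, hence preserves coproducts; so $\A\oplus\B$ is isomorphic to the free semigroup on $\{g,h\}$. Theorem~\ref{NoFreeSemiGroup} then says precisely that $\A\oplus\B$ is not a retract of ${}^*(\A\oplus\B)$.

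For the extension-monad half, notice that $\A\oplus\B$ is generated by the finite set $\{g,h\}$, so Theorem~\ref{FinitelyGeneratedEquiv} gives $\widehat{\A\oplus\B}={}^*(\A\oplus\B)$; thus $\A\oplus\B$ fails to be a retract of $\widehat{\A\oplus\B}$ for the same reason. One need not even invoke finite generation: since $g^{\eta}\in\langle\{g\}\rangle^{(int)}\subseteq\widehat{\A\oplus\B}$ for every $\eta\in{}^*\N\setminus\N$, and all the powers $g^{n}$ and $g^{\eta-n}$ appearing in the proof of Theorem~\ref{NoFreeSemiGroup} also lie in $\langle\{g\}\rangle^{(int)}$, that argument transfers verbatim with $\widehat{\A\oplus\B}$ in place of ${}^*(\A\oplus\B)$.

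There is essentially no obstacle here; the one point worth stating carefully is that the coproduct of two free semigroups is the free semigroup on the disjoint union of the generating sets, i.e.\ that the free functor preserves coproducts (equivalently, a direct check of the universal property of the free semigroup on $\{g,h\}$ against $\A\oplus\B$). If one wants the sharper illustration in which $\A$ and $\B$ are themselves retracts of their enlargements and extension monads, one instead takes $\A$ and $\B$ to be one-element semigroups: each equals its enlargement and its extension monad, hence is trivially a retract of both, yet $\A\oplus\B$ is the free product of two idempotents and contains a free subsemigroup generated by $x=gh$. Running the argument of Proposition~\ref{EnlargementCounterEx}, for $\eta\in{}^*\N\setminus\N$ any purported retraction $r$ must satisfy $r(x^{\eta})=r(x^{\eta-n})\,x^{n}$ for every finite $n$, which forces the fixed word $r(x^{\eta})$ to have length at least (roughly) $2n$ for all $n$ --- a contradiction.
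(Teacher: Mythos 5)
Your proposal is correct and follows essentially the same route as the paper's own proof: exhibit the direct sum as (or as containing) a free semigroup on two generators, invoke Theorem~\ref{NoFreeSemiGroup} to rule out a retraction from the enlargement, and use finite generation (Theorem~\ref{FinitelyGeneratedEquiv}) to identify $\widehat{\A\oplus\B}$ with ${}^*(\A\oplus\B)$. Your reading of the summands as free semigroups on singleton generating sets is what makes the paper's phrase ``the direct sum of two singleton semigroups is the free semigroup generated by two elements'' literally correct, and your alternative with genuine one-element semigroups (whose coproduct is a free product of idempotents, not a free semigroup) together with the word-length argument in the style of Proposition~\ref{EnlargementCounterEx} covers the other reading, where the summands really are retracts of their enlargements and extension monads.
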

\begin{proof}
    The direct sum of two singleton semigroups, $\A$ and $\B$, is the free semigroup generated by two elements, which, given \ref{NoFreeSemiGroup}, shows that $\A\oplus\B$ is not a retract of ${}^*(\A\oplus\B) = \widehat{\A\oplus\B}$, which are equal due to finite generation.
\end{proof}

The following result serves to greatly restrict retractions between extension monads or enlargements and their standard algebras. In fact, the previous theorem is a direct result of the following.


\begin{theorem}\label{}
    Any algebra that is a retract of its extension monad or enlargement is finite.
\end{theorem}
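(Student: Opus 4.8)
The plan is to argue the contrapositive. The substantive content is: if $\A$ is infinite then $\A$ is not a retract of its enlargement ${}^*\A$; for the extension monad the only additional situation is when $\hat\A=\A$, which by Theorem \ref{LocalFiniteEquiv} is exactly the locally finite case, and there the statement degenerates (the identity is a retraction). I would open with two reductions. First, by the earlier proposition — together with the remark that a homomorphism from an enlargement onto $\A$ can exist only then — any retraction onto $\A$ from $\hat\A$ or from ${}^*\A$ forces $\mathcal{O}_\A$ to be a finite collection of finitary operations; assume this, so every term used below is built from finitely many finitary symbols and ${}^*\mathcal{O}_\A$, $\mathcal{O}_\A$ carry the same index set. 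Second, since $\A\le\hat\A\le{}^*\A$, a retraction $r\colon{}^*\A\twoheadrightarrow\A$ restricts to a retraction $r|_{\hat\A}\colon\hat\A\twoheadrightarrow\A$; hence when $\A$ is not locally finite it suffices to rule out a retraction $\hat\A\twoheadrightarrow\A$, and the only genuinely separate case is a locally finite infinite $\A$ carrying a retraction ${}^*\A\twoheadrightarrow\A$.

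The engine is a telescoping identity generalizing Theorem \ref{NoFreeSemiGroup}. Given such a retraction $r$, I would locate a witness $\xi$ in the domain that is ``infinitely deep.'' If $\A$ is not locally finite, Theorem \ref{LocalFiniteEquiv} gives $\xi\in\hat\A\setminus A$, and by the description $\hat\A=\bigcup\{\langle F\rangle^{(int)}:F\subseteq A\text{ finite}\}$ we may take $\xi\in\langle F\rangle^{(int)}=\bigcup_{\nu\in{}^*\N}A_\nu$ with $F$ finite; then $\langle F\rangle$ is infinite (otherwise $\langle F\rangle^{(int)}=\langle F\rangle\subseteq A$), $\xi\in A_\nu$ for some nonstandard $\nu$, and $\xi\notin A_n$ for every standard $n$. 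If $\A$ is finitely generated and infinite, then $\hat\A={}^*\A$ by Theorem \ref{FinitelyGeneratedEquiv} and $\xi$ is a term of nonstandard depth in the finite generating set, exactly as for a free semigroup. If $\A$ is locally finite and infinite, one instead builds $\xi$ as a hyperfinite combination indexed past a strictly increasing chain $\B_1\subsetneq\B_2\subsetneq\cdots$ of finite subalgebras of $\A$ whose union is infinite. In every case an internal descent along the (hyperfinite) computation tree of $\xi$ yields, for each standard $n$, a decomposition $\xi=t_n(\eta_n;\bar c_n)$ with $t_n$ a standard depth-$n$ term, $\bar c_n$ parameters chosen inside $\A$ (so $r(\bar c_n)=\bar c_n$), and $\eta_n$ in the domain; applying $r$ gives $r(\xi)=t_n(r(\eta_n);\bar c_n)$ for all standard $n$, so the single element $r(\xi)\in A$ is forced to sit at standard depth $\ge n$ below fixed parameters for every $n$ at once. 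When $\A$ carries a length-like homomorphism onto $(\N,+)$, or is cancellative, this is an immediate contradiction, precisely as in Theorem \ref{NoFreeSemiGroup}.

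The main obstacle is closing the telescope for a general infinite algebra: unlike a free semigroup, an arbitrary infinite algebra of finite finitary signature need carry neither a length invariant nor a cancellation law, so a contradiction does not fall straight out of $r(\xi)=t_n(r(\eta_n);\bar c_n)$. The fix I would pursue is a König/compactness extraction — from infiniteness, produce inside $\A$ either a unary polynomial with an infinite orbit or an element requiring unboundedly deep standard terms over a fixed finite parameter set — and then arrange the telescope so that its tail $\eta_n$, pushed by overspill to a nonstandard stage, collapses against the finite object $r(\xi)$ and makes the maps $t_n(\,\cdot\,;\bar c_n)$ injective on an infinite subset of $A$, which is absurd since $r(\xi)$ is one element. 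A secondary technical point is to verify that the internal descent defining the $t_n$ is legitimate when run externally — only finitely many stages are used at any time, so no nonstandard recursion involving the external map $r$ is required — and that the parameters $\bar c_n$ can always be taken within $\A$; this is routine but should be recorded.
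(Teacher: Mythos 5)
Your route is genuinely different from the paper's, and the difference is exactly where your plan breaks. The paper disposes of the theorem in one stroke by saturation: the restrictions of the retraction $r$ to finite subsets of $\hat{A}$ are internal partial homomorphisms, saturation applied to this family yields an internal partial homomorphism whose domain is an internal set containing $\hat{A}$ (respectively ${}^*A$) and whose range is $A$, and hence $A$ is internal and therefore finite\textbf{---}no term depth, no case split on local finiteness, no appeal to any special structure of the operations. You instead try to globalize the telescoping argument of Theorem \ref{NoFreeSemiGroup}, and the step you yourself flag as ``the main obstacle'' is a genuine gap, not a technicality: from $r(\xi)=t_n(r(\eta_n);\bar c_n)$ for all standard $n$ no contradiction follows in a general algebra, and your proposed repair (a K\"onig-type extraction plus ``injectivity of $t_n(\,\cdot\,;\bar c_n)$ on an infinite subset of $A$, which is absurd since $r(\xi)$ is one element'') is not an argument\textbf{---}injectivity of those polynomial maps on infinite subsets of $A$ is not by itself absurd, nothing in your construction forces it, and no mechanism is given for extracting it from infiniteness alone. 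Cancellation or a length homomorphism is precisely the extra structure the free-semigroup proof consumes, and an arbitrary infinite algebra of finite finitary type need carry nothing playing that role; for instance, if every basic operation is a projection, every map fixing $A$ pointwise is a homomorphism and the telescope has nothing whatever to grip.

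Your case split also leaves the locally finite branch unproved and, worse, unexamined. For infinite locally finite $\A$ you must still exclude a retraction ${}^*\A\twoheadrightarrow\A$, but there the witness you need does not exist: by Theorem \ref{LocalFiniteEquiv} the nonstandard elements of ${}^*\A$ lie outside every $\left<F\right>^{(int)}$ with $F\subseteq A$ finite, so they admit no representation $t_n(\eta_n;\bar c_n)$ over standard parameters at any depth, and your ``hyperfinite combination indexed past a chain of finite subalgebras'' is never actually defined; the paper's own examples in this regime (the two lattice orderings on $\N$) each use ad hoc order arguments, not depth. Finally, your opening observation that the locally finite case ``degenerates'' because $\hat\A=\A$ makes the identity a retraction cannot be waved away: for an infinite locally finite $\A$ that identity map is a retraction of the extension monad onto an infinite algebra, which collides with the statement being proved rather than trivializing it, so any correct write-up must confront that case explicitly (the paper's saturation proof makes no such case split at all). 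As it stands the central contradiction is missing, so the proposal is a program rather than a proof.
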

\begin{proof}
The result comes almost immediately from a saturation argument.
Let $r: \hat{\A} \rightarrow \A$ be a retraction and take $\mathcal{F}$ to be the collection of restrictions of $r$ to finite subsets of $\hat{A}$. Now define relation $s$ such that for $r|_F, r|_G \in \mathcal{F}$, we have $(r|_F,r|_G) \in s$ if and only if $F \subseteq G$ and $r|_F,r|_G$ are internal functions that are partial homomorphisms. Each element of $\mathcal{F}$ is an internal function, because it has finite domain and range, and a partial homomorphism. We know there exists a finite set that contains any finite collection of finite sets. Therefore, by saturation, we can also find internal function $r|_I \in {}^*\mathcal{F}$ for internal set $I$ such that $\hat{A} \subseteq I$ and $r|_I$ is an internal partial homomorphism. Since $r|_I: I \twoheadrightarrow A$ is internal, we have that $A$ is internal and, thus, that $A = {}^*A = \hat{A}$, showing us that $A$ is finite. The same argument works if we assume instead that $A$ is a retract of ${}^*A$.
\end{proof}


\section{Future Work}
In conclusion, we include several problems and partial proofs related to the material presented in the body of this paper. These will be worked on in more detail in the first author's dissertation \cite{Danielle}.

\begin{problem}\label{}
    Is the following statement true?
    
    If $\A = (A; f, g)$, $\B = (A; f)$, and $\C = (A; g)$, and if $\hat{B} = \hat{C}$, then $\hat{A} = \hat{B} = \hat{C}$. Or more generally, if $\A = (A; \mathcal{O})$ and $\A_i = (A; \mathcal{O}_i)$ for $i \in I$ where $\displaystyle\bigcup_{i \in I} \mathcal{O}_i = \mathcal{O}$, and if $\hat{A_i} = \hat{A_j}$ for $i,j \in I$, then $\hat{A} = \hat{A_i}$ for all $i \in I$.
\end{problem}

\begin{problem}\label{}
    Is the following statement true?
    
    Assume that $\A$ is a retract of $\hat{\A}$ with $r: \hat{A} \rightarrow A$ satisfying the properties of retraction and assume that $\A \neq \hat{\A}$. Let $H$ be a finite subset of $A$. Then $\left < r^{-1}[H] \right >^{(int)} \geq \widehat{\left < H \right >}$.
\end{problem}
\newline

The behaviors of the extension monad of a ring intuitively seem as though they should follow logic similar to the case of the ring of integers, and the following conjectural property also seems as though it should extend to dibinary distributive algebras.

\begin{conjecture}\label{}
    The extension monad of a ring has the same underlying set as the extension monad of its additive reduct.
\end{conjecture}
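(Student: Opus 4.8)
Write the ring as $\A = (A; +, \cdot)$ (with a unit if one is present), and let $\A_{+} = (A; +)$ be its additive group reduct; the goal is that $\hat{\A}$ and $\widehat{\A_{+}}$ have the same underlying set, and the plan is to split this into the two inclusions, using the intersection description of $\hat{\,\cdot\,}$ together with the generation descriptions of Theorem \ref{ExtensionMonadEq}.

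\textbf{The inclusion $\widehat{\A_{+}} \subseteq \hat{\A}$.} This one is immediate: every internal subring $\B \leq {}^*\A$ with $A \subseteq B$ is in particular an internal additive subgroup of ${}^*\A_{+}$ containing $A$, so the family of algebras whose intersection is $\hat{\A}$ is a subfamily of the one whose intersection is $\widehat{\A_{+}}$, and intersecting over a smaller family yields a larger algebra. Equivalently, $\langle F\rangle^{(int)}$ formed using $+$ alone is contained in $\langle F\rangle^{(int)}$ formed using $+$ and $\cdot$ for each finite $F \subseteq A$, and one passes to the unions of Theorem \ref{ExtensionMonadEq}.

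\textbf{The inclusion $\hat{\A} \subseteq \widehat{\A_{+}}$.} By Theorem \ref{ExtensionMonadEq} it suffices to show $\langle F\rangle^{(int)} \subseteq \widehat{\A_{+}}$ for every finite $F \subseteq A$, where $\langle F\rangle^{(int)}$ is the internal \emph{subring} generated by $F$. Since $F$ is finite, $\langle F\rangle^{(int)} = {}^*\langle F\rangle$, where $\langle F\rangle$ is the subring of $A$ generated by $F$ (as in the proof of Theorem \ref{FinitelyGeneratedEquiv}). Applying the earlier proposition that $\A' \leq \B'$ implies $\hat{\A'} = \hat{\B'} \cap {}^*A'$ to $(\langle F\rangle; +) \leq \A_{+}$ gives $\widehat{(\langle F\rangle; +)} = \widehat{\A_{+}} \cap {}^*\langle F\rangle$, so the containment ${}^*\langle F\rangle \subseteq \widehat{\A_{+}}$ that I need is \emph{equivalent} to ${}^*\langle F\rangle = \widehat{(\langle F\rangle; +)}$, which by Theorem \ref{FinitelyGeneratedEquiv} holds if and only if the abelian group $(\langle F\rangle; +)$ is finitely generated. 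Thus the whole statement reduces to the purely algebraic fact that every finitely generated subring of $\A$ is finitely generated as an abelian group, and for rings with this property — $\Z$, matrix rings over $\Z$, orders in number fields, finite rings — the proof is complete. The dibinary distributive analogue runs identically, with ``subring'' replaced by ``subalgebra'' and ``additively finitely generated'' by ``finitely generated as a module over the distinguished additive reduct''.

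\textbf{Main obstacle.} The reduction above also shows where the statement, read literally, is in jeopardy: the equivalence makes the hard inclusion \emph{fail} as soon as some finitely generated subring of $\A$ is not additively finitely generated, and this already occurs for $\A = \Q$, since $\Z[1/2]$ is not a finitely generated abelian group, its internal enlargement contains $1/2^{\eta}$ for infinite $\eta \in {}^*\N$ (hence so does $\hat{\Q}$ as a ring), while the internal additive subgroup $\tfrac{1}{L}\,{}^*\Z$ with $L = \operatorname{lcm}(1, 2, \dots, \eta)$ contains $\Q$ but omits $1/2^{\eta}$, so $1/2^{\eta} \notin \widehat{(\Q;+)}$. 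So a correct proof must either add the hypothesis that finite subsets of $\A$ generate additively finitely generated subrings (under which the argument above finishes), or restrict the conjecture to that class of rings; pinning down the sharp correct formulation, and reconciling it with Proposition \ref{QRingMonad}, is the crux of completing this.
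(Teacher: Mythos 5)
You should first note that the paper never proves this statement: it appears only as a conjecture in the Future Work section, with Proposition \ref{QRingMonad} playing the role of supporting evidence for the special case $\Q$, so there is no proof of the paper's to compare yours against step by step. Taken on its own terms, your proposal is essentially sound, and the ``obstacle'' paragraph is its most important part. The inclusion $\widehat{(\A;+)}\subseteq\hat{\A}$ is correct (fewer algebras intersected), the reduction of the hard inclusion to ``every finitely generated subring is finitely generated as an additive group'' is a legitimate combination of Theorem \ref{ExtensionMonadEq}, Theorem \ref{FinitelyGeneratedEquiv} and the proposition $\hat{\A}=\hat{\B}\cap{}^*\A$, and your $\Q$ example genuinely refutes the conjecture as literally stated: $2^{-\eta}$ lies in every internal subring $\B\leq{}^*\Q$ containing $\Q$ (the internal set $\{n\in{}^*\N : 2^{-n}\in B\}$ contains $\N$ and is closed under successor, hence by internal induction equals ${}^*\N$; equivalently ${}^*(\Z[1/2])=\langle\{1/2\}\rangle^{(int)}\subseteq\hat{\Q}$), while the internal additive group $\tfrac{1}{L}{}^*\Z$ with $L=\operatorname{lcm}(1,2,\dots,\eta)$ contains $\Q$ and omits $2^{-\eta}$, since the $2$-adic valuation of $L$ is only $\lfloor\log_2\eta\rfloor$. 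So $2^{-\eta}\in\hat{\Q}$ as a ring but $2^{-\eta}\notin\widehat{(\Q;+)}$.

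Be aware that this places you in conflict not only with the conjecture but with the paper itself: Proposition \ref{QRingMonad} asserts exactly the $\Q$ case you refute, and Proposition \ref{AddGroupMonad} describes $\widehat{(\Q;+)}$ with denominators in $\langle\Z\setminus\{0\}\rangle^{(int)}_{({}^*\Z;\cdot^{{}^*\Z})}$, which admits elements such as $2^{\eta}$ that your lcm subgroup excludes. The defect is in those proofs, not in your computation: the exclusion arguments there only treat denominators possessing an \emph{infinite prime} factor, and the containment of hyperfinite-power denominators in the additive monad is never justified (closure of the displayed set under binary subtraction does not show it lies inside every internal subgroup containing $\Q$). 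Indeed, by the paper's own proposition that $\hat{\A}=\bigcup\{\langle F\rangle^{(int)} : F\subseteq A \text{ finite}\}$, the additive monad is $\bigcup\{c\,{}^*\Z : c\in\Q\}$, i.e.\ only standard denominators occur, confirming your claim. Two caveats: your reduction step leans on the proposition $\hat{\A}=\hat{\B}\cap{}^*\A$, whose own proof in the paper is questionable, but your counterexample does not depend on it; and your proposed repair --- restricting to rings all of whose finitely generated subrings are additively finitely generated ($\Z$, matrix rings over $\Z$, orders, finite rings) --- is exactly the hypothesis under which your first two steps complete, and is the sharpest form of the statement that the paper's machinery supports.
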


\begin{conjecture}\label{}
    The previous property holds for all dibinary algebras\textbf{---}algebras with only two binary operations $\A = (A; \beta, \delta)$ with a distributive property, i.e. $\delta(x,\beta(y,z)) = \beta(\delta(x,y),\delta(x,z))$ and $\delta(\beta(x,y),z)=\beta(\delta(x,z),\delta(y,z))$\textbf{---}for the $\beta$ reduct.
\end{conjecture}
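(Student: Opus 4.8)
Write $\B=(A;\beta)$ for the $\beta$-reduct, so the assertion is that $\hat\A$ and $\hat\B$ have the same underlying set, and the plan is to prove the two inclusions separately. The inclusion $\hat\A\supseteq\hat\B$ is routine: if $\C\leq{}^*\A$ is internal and $\A$ is a subreduct of $\C$, then forgetting ${}^*\delta$ exhibits $\C|_\beta$ as an internal subalgebra of ${}^*\B$ for which $\B$ is a subreduct, with the same underlying set; so at the level of underlying sets the family intersected in the definition of $\hat\A$ is a subfamily of the one intersected for $\hat\B$, and intersecting a smaller family can only enlarge the result.

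For $\hat\A\subseteq\hat\B$ I would work from the characterization $\hat\A=\bigcup\{\langle F\rangle^{(int)}\mid F\subseteq A\text{ finite}\}$ (available here since $\A$ has only the two operations $\beta,\delta$) and use the distributive laws to normalize mixed terms. The first step is to check that $\hat\B$ is closed under ${}^*\delta$: if $a\in\langle F_a\rangle^{(int)}_\beta$ and $b\in\langle F_b\rangle^{(int)}_\beta$ with $F_a,F_b\subseteq A$ finite, then transfer of the purely syntactic fact ``$\delta$ distributes across every finite $\beta$-term, so $\delta(s,t)$ is provably equal, in every dibinary algebra, to a $\beta$-term all of whose leaves lie among $\{\delta(u,v): u\in\mathrm{leaves}(s),\,v\in\mathrm{leaves}(t)\}$'' rewrites ${}^*\delta(a,b)$ as a ${}^*\beta$-combination of the \emph{finite standard} set $\delta[F_a\times F_b]\subseteq A$; hence ${}^*\delta(a,b)\in\langle\delta[F_a\times F_b]\rangle^{(int)}_\beta\subseteq\hat\B$, and closure under ${}^*\beta$ follows from the same generating-set bookkeeping. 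One then tries to iterate this: for finite $F\subseteq A$ write $\langle F\rangle^{(int)}=\bigcup_{n\in{}^*\N}A_n$ with $A_0=F$ and $A_{n+1}={}^*\beta[A_n^2]\cup{}^*\delta[A_n^2]$, and push the distributive rewriting through the recursion so that $A_n\subseteq\langle F^{(n)}\rangle^{(int)}_\beta$, where $F^{(0)}=F$ and $F^{(n+1)}=F^{(n)}\cup\delta[F^{(n)}\times F^{(n)}]$.

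The main obstacle is exactly the passage to hyperfinite $n$: for $\nu\in{}^*\N\setminus\N$ the set $F^{(\nu)}$ is only \emph{hyperfinite} and in fact ranges through the internal $\delta$-subalgebra $\langle F\rangle^{(int)}_\delta$, so the argument closes only if every hyperfinitely iterated $\delta$-monomial over a finite standard set already lies in $\hat\B$, i.e.\ $\langle F\rangle^{(int)}_\delta\subseteq\hat\B$ for finite $F\subseteq A$. Bare distributivity does not supply this: in $(\N;\max,+)$, with $\beta=\max$ and $\delta=+$ (which satisfies both distributive identities), the $\delta$-monomials over $\{1\}$ fill out ${}^*\N\setminus\{0\}$, while $(\N;\max)$ is locally finite and hence $\hat{(\N;\max)}=\N$ by \ref{LocalFiniteEquiv}. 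So I expect the statement to need an extra hypothesis, and I see two clean settings where the plan goes through: (i) if $(A;\delta)$ is locally finite then $\langle F\rangle^{(int)}_\delta=\langle F\rangle_\delta\subseteq A$ for finite $F$ and the iteration above closes immediately, yielding $\hat\A=\hat\B$; (ii) if $\beta$ is a group operation one can imitate the proof of \ref{QRingMonad}, starting from $x\notin\hat\B$, locating an ``infinite prime'' in a denominator-type witness for $x$, and manufacturing an internal $\{\beta,\delta\}$-subalgebra that contains $A$ but omits $x$. The genuinely hard case, and the one I expect to require a new idea, is a dibinary algebra with neither a group $\beta$ nor a locally finite $\delta$.
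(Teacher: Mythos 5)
The paper contains no proof of this statement: it is posed as an open conjecture in the Future Work section, so there is nothing of the authors' to compare your argument against. Judged on its own terms, your proposal is not (and could not be) a proof of the literal statement, and your own analysis shows why: the algebra $(\N;\max,+)$ with $\beta=\max$ and $\delta=+$ satisfies both displayed distributive identities, its $\beta$-reduct $(\N;\max)$ is locally finite, so by Theorem \ref{LocalFiniteEquiv} the extension monad of the reduct has underlying set $\N$, while $(\N;\max,+)$ is finitely generated, so by Theorem \ref{FinitelyGeneratedEquiv} its extension monad is all of ${}^*\A$, with underlying set ${}^*\N\neq\N$. Hence the two extension monads have different underlying sets and the conjecture, as stated, is false. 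Your easy inclusion is correct (any internal witness in the intersection defining $\widehat{\A}$ restricts, on the same underlying set, to an internal ${}^*\beta$-subalgebra witnessing $\widehat{\B}$, so $\widehat{\B}\subseteq\widehat{\A}$ as sets), and your diagnosis of where the reverse inclusion breaks\textbf{---}hyperfinitely iterated $\delta$-terms over a finite standard set need not stay inside $\widehat{\B}$ unless the $\delta$-reduct is locally finite or $\beta$ carries group structure as exploited in Proposition \ref{QRingMonad}\textbf{---}is exactly the right way to see what extra hypotheses the conjecture needs.

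Two small remarks. First, your syntactic normalization step (rewriting $\delta(s,t)$ as a $\beta$-term whose leaves are $\delta(u,v)$ for leaves $u$ of $s$ and $v$ of $t$) uses distributivity of $\delta$ over $\beta$ on both sides; the conjecture's display provides both identities, though the authors' follow-up note suggests only one is intended\textbf{---}your counterexample satisfies both, so it refutes the statement under either reading. Second, since there is no proof in the paper to recover, the appropriate deliverable here is not a completed proof but precisely what you have: a counterexample to the statement as written, together with the two salvageable special cases you identify (locally finite $\delta$-reduct, or $\beta$ a group operation), which indicate how the conjecture should be re-stated if it is to have a chance of being true.
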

\newline

Note that the previous statement does not require both operations to distribute over each other; only one needs to distribute over the other.

\begin{problem}\label{}
    Is the following statement true?
    
    Let $\A, \B$ be algebras in variety $V$, then the $V$-direct sum satisfies:
    
    $\hat{\A} \oplus \hat{\B} \leq \widehat{\A \oplus \B} = \bigcap \{ \S_\A {}^*\oplus \S_\B | \S_\A, \S_\B \text{ internal, } \A \leq \S_\A \leq {}^*\A, \B \leq \S_\B \leq {}^*\B \}$.
    
    More generally, if $\{\A_i\}$ is an arbitrary collection of algebras in variety $V$, then the $V$-direct sum satisfies:
    
    $\displaystyle\bigoplus_{i \in I} \widehat{\A_i} \leq \widehat{\displaystyle\bigoplus_{i \in I} \A_i} = \bigcap \left \{ \displaystyle{}^*\bigoplus_{i \in I} \S_i | \S_i \text{ internal, } \A_i \leq \S_i \leq {}^*\A_i \right \}$.
\end{problem}

\begin{problem}\label{ProdMonad}
    Is the following statement true?
    
    The extension monad of a direct product equals the direct product of extension monads of the factors.
\end{problem}
\begin{proof}
    The following is a partial proof. Let $\{\A_i \}_{i \in I}$ be a collection of algebras with the same language. Then for any $i_* \in {}^*I \setminus I$, there exists an internal algebra $\B$ for which $\prod_{i \in I} \A_i$ is a subreduct of $\B$ and $\B$ does not have an entry correpsonding to $i_*$. Thus, $\widehat{\prod_{i \in I} \A_i}$ only has the standard entries corresponding to $I$. Each of these entries corresponds to some $\A_i$ for $i \in I$. If we let $\B_i \leq {}^*\A_i$ be a corresponding internal algebra such that $\A_i$ is a subreduct of $\B_i$, we see that ${}^*\A_1 \prod {}^*\A_2 \prod ... \prod \B_i \prod {}^*\A_{i + 1} \prod ...$ for $i \in {}^*I$ is an internal subalgebra for which $\prod_{i \in I} \A_i$ is a subreduct of this algebra. Therefore, we see that the algebra, which is simply ${}^*(\prod_{i \in I} \A_i)$ but $\hat{\A}_i$ at entry $i$, contains $\widehat{\prod_{i \in I} \A_i}$. Thus, $\widehat{\prod_{i \in I} \A_i} \subseteq \prod_{i \in I} \hat{\A}_i$.
\end{proof}



As can be seen by the number of unanswered problems, there is still significant work to be done on this topic. We plan to look deeper into the relations of direct sums and their extension monads, possibly touching on other research in related areas, as well. Furthermore, additional types of algebras that could be of interest include congruence commutative algebras and their associated varieties, periodic algebras, and semigroups of congruences. We would also like to consider the categorical structures and properties of extension monads, specifically if they are limits of their internal superalgebras in a categorical sense.

In future work, we also wish to broaden our scope to investigate reducts of retracts for the cases with infinitely many operations, as well as clarifying when an algebra is a retract of its enlargement.

\section{References}
\printbibliography

\end{document}